\title{Automorphism Groups and Invariant Theory on PN}
\author{Jo\~ao Alberto de Faria}
\address{
Department of Mathematics\\
Florida Institute of Technology \\
Melbourne, FL}
\email{jdefaria2010@my.fit.edu}
\author{Benjamin Hutz}
\address{
Department of Mathematics and Computer Science\\
Saint Louis University\\
St. Louis, MO}
\email{hutzba@slu.edu}
\thanks{The authors received funding from NSF Grant DMS-1415294. Thanks to Joseph Silverman for pointing out several errors in an earlier version.}
\subjclass[2010]{
37P05, 
37P45 
(primary);
13A50  
(secondary)}
\keywords{dynamical systems, automorphisms, invariant theory}
\definecolor{green}{rgb}{0,0.5,0}
\definecolor{dkgreen}{rgb}{0,0.6,0}
\definecolor{gray}{rgb}{0.5,0.5,0.5}
\definecolor{mauve}{rgb}{0.58,0,0.82}
\scriptsize\color{black},  
\definecolor{orange}{rgb}{1,0.65,0.17}
\providecommand{\abs}[1]{\left\lvert#1\right\rvert}
 \providecommand{\Spec}[1]{\text{Spec }#1}
\def\Z{\mathbb{Z}}
\def\Q{\mathbb{Q}}
\def\P{\mathbb{P}}
\def\A{\mathbb{A}}
\def\F{\mathbb{F}}
\def\M{\mathcal{M}}
\newcommand{\col}{\,{:}\,}
\newcommand{\tth}{^{\operatorname{th}}}
\DeclareMathOperator{\Gal}{Gal} \DeclareMathOperator{\Hom}{Hom}
\DeclareMathOperator{\PGL}{PGL} \DeclareMathOperator{\GL}{GL}
\DeclareMathOperator{\Aut}{Aut}\DeclareMathOperator{\Fix}{Fix}
\DeclareMathOperator{\SL}{SL}\DeclareMathOperator{\ch}{char}
\DeclareMathOperator{\Rat}{Rat}\DeclareMathOperator{\Res}{Res}
 \DeclareMathOperator{\tr}{trace}
\DeclareMathOperator{\Char}{char} \DeclareMathOperator{\Proj}{Proj}
 \DeclareMathOperator{\Conj}{Conj}
\DeclareMathOperator{\Ralg}{\textbf{R-Alg}}
\DeclareMathOperator{\Grp}{\textbf{Grp}}
\DeclareMathOperator{\Set}{\textbf{Set}}
\theoremstyle{plain}
\newtheorem{thm}{Theorem}[section]
\newtheorem*{thm*}{Theorem}
\newtheorem{lem}[thm]{Lemma}
\newtheorem{prop}[thm]{Proposition}
\newtheorem{cor}[thm]{Corollary}
\theoremstyle{definition}
\newtheorem{defn}[thm]{Definition}
\newtheorem{exmp}[thm]{Example}
\newtheorem*{exmp*}{Example}
\theoremstyle{remark}
\newtheorem*{rem}{Remark}
\begin{document}
\maketitle

\begin{abstract}
  Let $K$ be a field and $f:\P^N \to \P^N$ a morphism. There is a natural conjugation action on the space of such morphisms by elements of the projective linear group $\PGL_{N+1}$. The group of automorphisms, or stabilizer group, of a given $f$ for this action is known to be a finite group. In this article, we apply methods of invariant theory to automorphism groups by addressing two mainly computational problems. First, given a finite subgroup of $\PGL_{N+1}$, determine endomorphisms of $\P^N$ with that group as subgroup of its automorphism group. In particular, we show that every finite subgroup occurs infinitely often and discuss some associated rationality problems. Inversely, given an endomorphism, determine its automorphism group. In particular, we extend the Faber-Manes-Viray fixed-point algorithm for $\P^1$ to endomorphisms of $\P^2$. A key component is an explicit bound on the size of the automorphism group depending on the degree of the endomorphism.
\end{abstract}

 \section{Introduction}
    The invariant theory of finite groups is well developed and its connections to complex dynamical systems in dimension one have seen some attention, notably for solving polynomial equations through iteration such as in \cite{Doyle}. However, in higher dimensions less is known and in all dimensions rationality questions such as those in the arithmetic of dynamical systems are mainly unexplored. In this article we address two mainly computational problems. First, given a finite subgroup of $\PGL_{N+1}$, determine endomorphisms of $\P^N$ with that group as subgroup of its automorphism group. In particular, we show that every finite subgroup occurs infinitely often and discuss some associated rationality problems. Inversely, given an endomorphism, determine its automorphism group. To make more precise statements we first need to set-up some terminology.

    Let $K$ be a field and $f:\P^N \to \P^N$ be a morphism of degree $d$. We can represent $f$ as an $N+1$-tuple of homogeneous polynomials of degree $d$, with no common roots. There is a natural conjugation action on the space of such morphisms by elements of the projective linear group $\PGL_{N+1}$:
    \begin{equation*}
        f^{\alpha} = \alpha \circ f \circ \alpha^{-1} \qquad \alpha \in \PGL_{N+1}.
    \end{equation*}
    The quotient by this action is a geometric quotient that we call the moduli space $M_d^N$ of degree $d$ morphisms of projective space \cite{Levy, petsche, Silverman9}. Given a morphism $f: \P^N \to \P^N$, we define the \emph{stabilizer group} or \emph{group of automorphisms}
    \begin{equation*}
        \Aut(f) = \{ \alpha \in \PGL_{N+1} \col f^{\alpha} = f\}.
    \end{equation*}
    The group $\Aut(f)$ is always a finite subgroup of $\PGL_{N+1}$ \cite{petsche}. The existence of a nontrivial automorphism is rare in the sense that there is an open dense set in $M_d^N$ for which the automorphism group is trivial \cite{Levy}. However, the set of morphisms with nontrivial automorphisms is quite interesting, and the additional structure provided by the existing symmetries can lead to stronger results. For example, the set of rational preperiodic points for a family of twists is uniformly bounded \cite{Levy2} and an explicit bound is known over $\Q$ for the family
    \begin{equation*}
        f_{b}(z) = z + \frac{b}{z}.
    \end{equation*}
    Since the existence of a uniform bound even for the family of quadratic polynomials $f_c(z) = z^2+c$ remains an open conjecture, this additional automorphism structure is obviously helpful. The existence of nontrivial automorphisms is also closely related to the field of definition versus field of moduli problem which addresses when a $K$-rational point on $M_d^N$ has a representation as $K$-rational homogeneous polynomials \cite{Hutz11, Silverman12}. In particular, a morphism has a nontrivial automorphism if and only if it has a nontrivial rational twist, i.e. there are two representations of $\xi \in \M_d^N$ that are conjugate over the algebraic closure $\bar{K}$ but not over $K$. Further demonstrating that the maps with nontrivial automorphisms are special, for $d>2$ the singular points of $M_d^1$ are exactly those maps with nontrivial automorphisms \cite{Miasnikov}.

    This leads naturally to questions about determining the automorphism group of a map and the inverse problem: given a group $\Gamma$, determine maps $f$ with $\Gamma \subset \Aut(f)$. In dimension 1, Faber-Manes-Viray developed efficient algorithms to compute the automorphism group of a given morphism \cite{FMV} as well as the more general problem of determining when two given maps are conjugate. In the other direction, for $\P^1$ Silverman gives explicit forms for morphisms whose automorphism groups contain a cyclic or dihedral group. Aside from these explicit forms, the main tool is classical invariant theory.
    As early as Klein \cite{klein}, the connection was known between invariant polynomials of a group $\Gamma$ and morphisms of $\P^1$ that have $\Gamma$ as a subgroup of its automorphism group. These classical results were stated in the language of equivariants, which is equivalent to the notion of automorphisms in the context of this work. In the course of studying solutions to quintic equations, Doyle and McMullen \cite{Doyle} gave the full connection between invariants and automorphisms in dimension 1. Crass has used similar techniques for higher dimensional systems \cite{Crass, Crass2}.
    Miasnikov-Stout-Williams \cite{Miasnikov} partially address the inverse problem in dimension 1, which maps $f$ have $\Gamma \subset \Aut(f)$. They use results from West \cite{West}, which come from applying invariant theory to binary forms to study the dimension of the locus in $M_d^1$ that have a particular finite subgroup of $\PGL_2$ as subgroup of their automorphism group.

    In this article, we first address the inverse problem for $\P^N$. Relying heavily on classical invariant theory, we study which finite subgroups of $\PGL_{N+1}$ can occur as automorphism groups or subgroups of automorphism groups. Specifically, given a representation of a finite subgroup of $\PGL_{N+1}$, we give explicit constructions to determine maps with that group as automorphism group or subgroup of its automorphism group. We also address algorithms for determining the automorphism group of a map in $\P^2$ and generalizing the work of Faber-Manes-Viray \cite{FMV}. The determination of the automorphism group of a given map represents the Master's thesis of the first author and is implemented in the Sage computer algebra system for maps defined over $\Q$ \cite{defaria}.

\section{Results and Outline}
    We begin by reviewing some of the standard results in classical invariant theory concerning the ring of invariants and the module of equivariants in Section \ref{sect.invariant}. In Section \ref{sect.inv_and_auto} (Theorem \ref{thm_form_aut}), we prove the general connection between invariant $(N+1)$-forms and maps with nontrivial automorphism on $\P^N$. This generalizes the statement found in Klein \cite{klein} for $\P^1$ and was stated but not proven by Crass in \cite{Crass2}. It provides one of the main tools for solving the inverse problem since there are many algorithms available for computing invariants \cite{gatermann, Sturmfels2}.
    \begin{thm*}[Theorem \ref{thm_form_aut}]
        Define
        \begin{equation*}
            dX^I = (-1)^{\sigma_I}dx_{i_1} \wedge \cdots \wedge dx_{i_n}
        \end{equation*}
        where $I$ is the ordered set
        \begin{equation*}
            \{i_1,\ldots,i_n\} \quad i_1 < \cdots < i_n
        \end{equation*}
        and for $\hat{i}$ the index not in $I$, $\sigma_I$ is the permutation
        \begin{equation*}
            \begin{pmatrix}
              0 &1&\cdots&n\\
              \hat{i} &i_1&\ldots & i_n
            \end{pmatrix}.
        \end{equation*}
        $\Gamma$ invariant $n$-forms
        \begin{equation*}
            \phi = \sum_{\hat{i}=0}^n f_{\hat{i}}dX^I
        \end{equation*}
        are in 1-1 correspondence with maps
        \begin{equation*}
            f = (f_0,\dots, f_n)
        \end{equation*}
        with $\Gamma \subset \Aut(f)$.
    \end{thm*}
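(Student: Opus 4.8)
The plan is to realize the $n$-form $\phi$ as the contraction of the standard volume form $\Omega = dx_0 \wedge \cdots \wedge dx_n$ on the affine cone $\A^{n+1}$ against the vector field $V_f = \sum_{i=0}^{n} f_i\, \partial/\partial x_i$ attached to the tuple $f = (f_0,\dots,f_n)$, and then to translate the $\Gamma$-invariance of $\phi$ into the equivariance of $V_f$, which is exactly the condition $\Gamma \subset \Aut(f)$. First I would pin down the sign normalization: since $\iota_{\partial/\partial x_{\hat i}}\Omega = (-1)^{\hat i} dx_0 \wedge \cdots \wedge \widehat{dx_{\hat i}} \wedge \cdots \wedge dx_n$ and the permutation $\sigma_I$ moving $\hat i$ to the front of the ordered complement $I$ has sign $(-1)^{\hat i}$, one gets $dX^I = \iota_{\partial/\partial x_{\hat i}}\Omega$. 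Summing over $\hat i$ yields $\phi = \sum_{\hat i} f_{\hat i}\, dX^I = \iota_{V_f}\Omega$, so that tuples $f$ and $n$-forms of the displayed shape already correspond bijectively by construction; the substance of the theorem is matching the two invariance conditions under this identification.

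Next I would compute the pullback of $\phi$ by a representative $\gamma \in \GL_{n+1}$ of an element of $\Gamma$. Using $\gamma^*\Omega = \det(\gamma)\,\Omega$ together with the naturality identity $\gamma^*(\iota_V\Omega) = \iota_{\gamma^* V}(\gamma^*\Omega)$, and the fact that for a linear map the pullback vector field is $(\gamma^* V_f)(x) = \gamma^{-1} f(\gamma x)$, I obtain
\[
    \gamma^*\phi = \det(\gamma)\, \iota_{W}\Omega, \qquad W(x) = \gamma^{-1} f(\gamma x).
\]
Because $\iota_{(-)}\Omega$ is injective on vector fields, invariance $\gamma^*\phi = \phi$ is equivalent to $\det(\gamma)\,\gamma^{-1} f(\gamma x) = f(x)$, i.e. $f(\gamma x) = \det(\gamma)^{-1}\gamma f(x)$. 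Replacing $x$ by $\gamma^{-1}x$ shows this says $\gamma \circ f \circ \gamma^{-1} = f$ in $\PGL_{n+1}$, that is, $\gamma \in \Aut(f)$. Conversely any relation $f(\gamma x) = c_\gamma \gamma f(x)$ gives $W = c_\gamma V_f$ and hence $\gamma^*\phi = \det(\gamma)c_\gamma\,\phi$.

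The last step is to assemble these into the claimed bijection. The assignment $\phi \mapsto f$ and its inverse $f \mapsto \iota_{V_f}\Omega$ are mutually inverse by the first step, and the computation above shows $\phi$ is $\Gamma$-invariant precisely when each $\gamma \in \Gamma$ lies in $\Aut(f)$, i.e. $\Gamma \subset \Aut(f)$. The main obstacle is the scalar bookkeeping forced by working in $\PGL_{n+1}$: genuine invariance of $\phi$ demands the normalized relation $c_\gamma = \det(\gamma)^{-1}$, whereas membership $\gamma \in \Aut(f)$ only requires some scalar $c_\gamma$. I would resolve this by fixing a finite lift of $\Gamma$ to $\GL_{n+1}$ and tracking the map $\gamma \mapsto c_\gamma\det(\gamma)$ as a character of $\Gamma$; since $f$ is homogeneous of degree $d$, rescaling a generator $\gamma \mapsto t\gamma$ sends $c_\gamma\det(\gamma) \mapsto t^{\,d+n}\, c_\gamma\det(\gamma)$, so over an algebraically closed field one can choose the lift to kill this character and make the correspondence exact. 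Verifying that this normalization is consistent across $\Gamma$ and independent of the chosen representatives is the delicate point; everything else is the routine contraction computation sketched above.
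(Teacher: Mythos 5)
Your core computation is correct and is in substance the same argument as the paper's: the identity $\phi=\iota_{V_f}\Omega$ is exactly the ``Hodge duality'' $\sum_i f_i\,dx_i = f\cdot dX^I$ that the paper invokes, and your pullback calculation $\gamma^*\phi=\det(\gamma)\,\iota_{W}\Omega$ with $W(x)=\gamma^{-1}f(\gamma x)$ is the same equivariance computation the paper performs (with the opposite but equivalent convention $\gamma\phi(\bar x)=\phi(\gamma^{-1}\bar x)$). If anything, your version is more careful: you pin down the sign in $dX^I=\iota_{\partial/\partial x_{\hat i}}\Omega$ and display the determinant twist, which the paper never shows explicitly --- harmlessly so, because it fixes the lift $\overline{\Gamma}\subset\SL_{N+1}$, where $\det(\gamma)=1$.

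The place where you diverge, and where your proposal has a genuine soft spot, is the final normalization step. The paper does not demand strict invariance: by its Section 3 definition, an ``invariant'' is a \emph{relative} invariant, $\gamma F=\chi(\gamma)F$ for a linear character $\chi$, and its proof simply carries $\chi$ through both directions (compare the remark after Klein's formula that $G$ may be a relative invariant). Under that convention your relation $\gamma^*\phi=\det(\gamma)c_\gamma\,\phi$ already finishes the proof: on $\overline{\Gamma}\subset\SL_{N+1}$ the form is $\chi$-invariant with $\chi(\gamma)=c_\gamma$, and conversely. Your character-killing device is therefore unnecessary, and as stated it also undercuts the bijection you are trying to prove: the rescaled lift depends on $f$ (through $d$ and the character $\gamma\mapsto c_\gamma\det(\gamma)$), so ``strictly invariant $n$-forms'' would no longer be measured against a single fixed group action, and the asserted 1--1 correspondence between forms and maps loses its meaning; moreover, while the required $(d+n)$-th roots of $\chi(\gamma)^{-1}$ exist elementwise over $\bar{K}$, the rescaled generators need only generate some finite central extension of $\Gamma$, not the group you started with. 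All of this evaporates if you adopt the paper's convention: the scalar ambiguity inherent in working in $\PGL_{N+1}$ is absorbed into the character $\chi$ rather than normalized away.
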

    We then prove in Theorem \ref{thm_exist_aut} that all finite subgroups of $\PGL_{N+1}$ are realized as subgroups of automorphism groups. It was already known that all finite subgroups of $\PGL_2$ are realized \cite{Miasnikov}.
    \begin{thm*}[Theorem \ref{thm_exist_aut}]
        Let $\Gamma$ be a finite subgroup of $\PGL_{N+1}$. Then there are infinitely many morphisms $f:\P^N \to \P^N$ such that $\Gamma \subseteq \Aut(f)$.
    \end{thm*}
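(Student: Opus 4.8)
The plan is to use the correspondence of Theorem~\ref{thm_form_aut} to reduce the statement to the existence of a \emph{single} morphism of degree at least $2$ that admits $\Gamma$ as a subgroup of its automorphism group; the infinitude then comes for free by iteration. The key elementary observation is that $\Aut$ is preserved under composition: if $\alpha \circ f \circ \alpha^{-1} = f$ then $\alpha \circ f^{\circ k} \circ \alpha^{-1} = (\alpha \circ f \circ \alpha^{-1})^{\circ k} = f^{\circ k}$, so $\Gamma \subseteq \Aut(f)$ forces $\Gamma \subseteq \Aut(f^{\circ k})$ for all $k \geq 1$. If $f$ is a morphism of degree $d \geq 2$, its iterates $f, f^{\circ 2}, f^{\circ 3}, \dots$ are morphisms of pairwise distinct degrees $d, d^2, d^3, \dots$, hence pairwise distinct maps, and each has $\Gamma \subseteq \Aut$. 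Thus everything reduces to producing one morphism of degree $\geq 2$ with $\Gamma \subseteq \Aut(f)$.

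By Theorem~\ref{thm_form_aut}, such maps of a fixed degree $d$ are exactly the coefficient tuples $(f_0, \dots, f_N)$ of $\Gamma$-invariant $N$-forms, and these form a finite-dimensional $K$-vector space $V_d$ (the degree-$d$ piece of the module of equivariants over the ring of invariants). Assuming $\Char K \nmid |\Gamma|$, I would first check $V_d \neq 0$ for some $d \geq 2$. This is easy: lifting $\Gamma$ to $\tilde\Gamma \subseteq \GL_{N+1}$ and choosing any invariant $g$ of positive degree, the tuple $g \cdot (x_0, \dots, x_N)$ is equivariant, and more robustly the conjugation average $\sum_{\gamma \in \tilde\Gamma} \gamma^{-1} \circ P_d \circ \gamma$ of the $d$-th power map $P_d(x_0,\dots,x_N) = (x_0^d, \dots, x_N^d)$ is an equivariant of degree $d$; the Molien series of the equivariant module then guarantees $V_d \neq 0$ for all large $d$. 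The genuine difficulty is that a tuple in $V_d$ defines a \emph{morphism} only when $f_0, \dots, f_N$ have no common zero in $\P^N$, and neither construction above is base-point-free; indeed $g\cdot(x_0,\dots,x_N)$ vanishes along $\{g=0\}$.

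I expect this base-point-freeness to be the main obstacle, and I would attack it by genericity rather than by an explicit formula. Form the incidence variety $\mathcal{I} = \{(f,x) \in V_d \times \P^N : f_0(x) = \dots = f_N(x) = 0\}$ and project to $V_d$; a general $f \in V_d$ is base-point-free precisely when this projection is not dominant. The fiber of $\mathcal{I}$ over $x$ is the kernel of the evaluation map $\mathrm{ev}_x \col V_d \to K^{N+1}$, $f \mapsto (f_0(x), \dots, f_N(x))$, so stratifying $\P^N$ by $r = \rank \mathrm{ev}_x$ the projection is non-dominant as soon as $\dim\{x : \rank \mathrm{ev}_x = r\} < r$ for every value $r$ that occurs. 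Over the open locus of points with trivial $\Gamma$-stabilizer one expects $\mathrm{ev}_x$ to be surjective, giving $r = N+1 > N = \dim \P^N$, which is the desired inequality; the dangerous strata are the points fixed by nontrivial elements of $\Gamma$ and, if the linear representation is reducible, the $\Gamma$-invariant linear subspaces, where equivariance constrains $f(x)$ to a proper eigenspace and lowers $r$ for \emph{all} $d$. The crux is thus to show these strata are small, namely $\dim\{x : \rank \mathrm{ev}_x = r\} < r$, which I would establish by producing, for each such $x$, enough equivariants of degree $d$ with prescribed values at $x$ — using the representation theory of the stabilizer of $x$ together with the growth of the Molien series. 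Granting this count, a general member of $V_d$ is a morphism of degree $d \geq 2$ with $\Gamma \subseteq \Aut(f)$, and the iteration of the first paragraph finishes the proof.
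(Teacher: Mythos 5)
Your reduction of ``infinitely many'' to ``one'' is correct and is genuinely different from the paper's route: since $\alpha \circ f^{\circ k} \circ \alpha^{-1} = (\alpha \circ f \circ \alpha^{-1})^{\circ k}$, one morphism of degree $d \geq 2$ with $\Gamma \subseteq \Aut(f)$ yields the infinite sequence $f, f^{\circ 2}, f^{\circ 3}, \ldots$ of pairwise distinct morphisms, each with $\Gamma$ in its automorphism group. The paper instead obtains infinitude directly: it uses Proposition \ref{prop_fund_equi} (plus the fact that the groups in question are not pseudoreflection groups, so the number of fundamental equivariants is at least $4$) to produce a fundamental equivariant $G$ that is not of the trivial form $(Fx_0,\ldots,Fx_N)$, forms the family $\sum t_i G_i$ with coefficients $t_i$ in the ring generated by the primary invariants, identifies it with a point of $\A^{\tau}$ with $\tau \geq 1$, and invokes the Macaulay resultant, whose nonvanishing is an open condition on $\A^{\tau}$, to get morphisms in the family. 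Your iteration trick is cheaper but yields only iterates of a single map rather than a positive-dimensional family; either output satisfies the statement as written.

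The genuine gap is that your existence step---which after your reduction \emph{is} the theorem---is never proved: you write ``Granting this count,'' and the count as sketched is in fact false in the uniformity you need. It is not true that a generic element of $V_d$ is base-point-free ``for all large $d$.'' If $v$ is an eigenvector of a lift $\tilde\gamma$ with eigenvalue $\lambda$, then any lift of an $f$ with $\gamma \in \Aut(f)$ satisfies $\tilde\gamma f(v) = c^{-1} f(\lambda v) = (\lambda^d/c) f(v)$ for a scalar $c$ depending only on $f$ and $\gamma$, so $f(v)$ must be an eigenvector of $\tilde\gamma$ with eigenvalue $\lambda^d/c$ --- or $f(v) = 0$. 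Whether these spectral conditions at the various fixed strata are simultaneously satisfiable depends on $d$ modulo the orders of elements of $\Gamma$, not on the size of $d$; for bad residues the \emph{entire} degree is obstructed and every element of $V_d$ has a base point, however fast the Molien series grows. The paper exhibits exactly this phenomenon in Section \ref{sect.P2}: for the representation $C_5'$ generated by $\mathrm{diag}(\zeta_5,\zeta_5^2,\zeta_5^2)$ there is \emph{no} morphism of degree $4$ on $\P^2$ with $C_5' \subseteq \Aut(f)$. So your stratified inequality $\dim\{x \col \rank \mathrm{ev}_x = r\} < r$ fails for infinitely many $d$, and the missing ingredient --- a choice of degree compatible with the eigenvalue/character data at every fixed stratum --- is precisely the hard part, which you defer to ``representation theory of the stabilizer plus Molien growth'' without an argument. (To be fair, the paper's proof also asserts rather than verifies that the resultant is not identically zero on its family $\A^\tau$; but the paper's module-theoretic setup at least avoids your uniform-in-$d$ claim, which its own example refutes.)
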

    However, questions concerning fields of definition for these maps and their exact automorphism group are more delicate subjects. We show in Theorem \ref{thm_no_tetra} that all subgroups of $\PGL_2$ are realized as automorphism groups (as opposed to just subgroups of automorphism groups) over $\Q$ except for the tetrahedral group which requires $\Q(\sqrt{-3})$. In Section \ref{sect_no_tetra} we also give an example $f$ with $\Aut(f) = \Gamma$ for each finite subgroup $\Gamma$ of $\PGL_2$ (Figure \ref{fig:exact}).
    \begin{thm*}[Theorem \ref{thm_no_tetra}]
        There is no map $f:\P^1 \to \P^1$ defined over $\Q$ which has tetrahedral group as automorphism group.
    \end{thm*}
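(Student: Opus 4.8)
The plan is to translate the statement into invariant theory by way of Theorem \ref{thm_form_aut} and then locate the obstruction in the Galois action on the tetrahedral covariants. Write $\Gamma\subset\PGL_2$ for the tetrahedral group $A_4$ and recall two structural facts I will lean on: its normalizer $N_{\PGL_2(\Qbar)}(\Gamma)$ is the octahedral group $S_4$, with $S_4/\Gamma\cong\operatorname{Out}(\Gamma)\cong\Z/2$; and, in Klein's normalization where $\Gamma$ stabilizes the octahedron with vertices $\{0,\infty,\pm1,\pm i\}$, the basic covariants are the degree-six form $t=x_0x_1(x_0^4-x_1^4)$ (the six vertices, i.e.\ the fixed points of the three involutions) together with the two quartics $\Phi=x_0^4+2\sqrt{-3}\,x_0^2x_1^2+x_1^4$ and $\Psi=x_0^4-2\sqrt{-3}\,x_0^2x_1^2+x_1^4$ cutting out the two inscribed dual tetrahedra. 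The essential point is that $t$ and $\Phi\Psi=x_0^8+14x_0^4x_1^4+x_1^8$ are $S_4$-invariant and rational, whereas $\Phi,\Psi$ are only $\Gamma$-invariant, are interchanged by the nontrivial coset of $S_4$, and are conjugate to one another over $\Q(\sqrt{-3})$ under $\sqrt{-3}\mapsto-\sqrt{-3}$.

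First I would use Theorem \ref{thm_form_aut} to identify maps $f$ with $\Gamma\subseteq\Aut(f)$ with $\Gamma$-invariant $1$-forms, equivalently $\Gamma$-equivariants, and describe this module in terms of $\Phi,\Psi,t$. The key dichotomy is that such an equivariant is $S_4$-invariant exactly when it is symmetric under the swap $\Phi\leftrightarrow\Psi$; hence $\Aut(f)$ equals $\Gamma$ rather than containing $S_4$ precisely when $f$ sees $\Phi$ and $\Psi$ asymmetrically, i.e.\ when its equivariant genuinely involves the order-three relative character of the binary tetrahedral group $\SL_2(\F_3)$ (whose abelianization is $\Z/3$, the source of $\zeta_3=\tfrac{-1+\sqrt{-3}}{2}$). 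This is the structural reason the tetrahedral case is singled out: for $S_4$ the relevant characters are $\pm1$-valued, and for the icosahedral group the outer automorphism is not realized inside $\PGL_2$ at all, so neither group forces an auxiliary field.

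Next I would run the Galois descent. Since $\Aut(f)$ is cut out by polynomial identities, it is stable under $G_\Q=\Gal(\Qbar/\Q)$ when $f$ is defined over $\Q$, so $\sigma\mapsto(\alpha\mapsto\alpha^\sigma)$ gives a homomorphism $G_\Q\to\operatorname{Aut}(\Gamma)\cong S_4$; composing with $S_4\twoheadrightarrow\operatorname{Out}(\Gamma)\cong\Z/2$ yields a character $\chi\colon G_\Q\to\Z/2$. The heart of the argument is to show that $\chi$ is forced to be the nontrivial quadratic character attached to $\Q(\sqrt{-3})$. An order-three element $\tau\in\Gamma$ acts on its fixed-point quartic by $\zeta_3$, so the $\sqrt{-3}$-conjugation inverts $\tau$; but in $\operatorname{Aut}(A_4)=S_4$ every automorphism carrying a three-cycle to its inverse is an odd permutation, hence lies in $S_4\setminus\Gamma$ and induces the nontrivial class in $\operatorname{Out}(\Gamma)$. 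Concretely this is the statement that $\sqrt{-3}$-conjugation swaps the two tetrahedra $\Phi\leftrightarrow\Psi$. Granting this, $f$ defined over $\Q$ means its equivariant is $G_\Q$-fixed, hence symmetric under $\Phi\leftrightarrow\Psi$, hence $S_4$-invariant; thus $S_4\subseteq\Aut(f)$, contradicting $\Aut(f)=\Gamma$. The explicit $\Q(\sqrt{-3})$-model produced in Section \ref{sect_no_tetra} then shows this bound is sharp.

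The main obstacle I anticipate is the model-independence in the previous step: a priori one is free to replace $\Gamma$ by any $\PGL_2(\Qbar)$-conjugate, and I must rule out that some twisted model makes $\chi$ trivial (both tetrahedra individually rational), which would permit breaking the octahedral symmetry over $\Q$. I would handle this by pinning down $\chi$ from data intrinsic to $f$ rather than from a chosen normalization, namely the action of $G_\Q$ on the unordered pair of tetrahedra $\{T_1,T_2\}$ determined by $\Aut(f)$, equivalently the factorization type over $\Q$ of the rational octic $W=\Phi\Psi$ attached to $f$. Since each $T_i$ is a four-point configuration admitting an order-three automorphism, its cross-ratio is equianharmonic, and the splitting field of $W$ into its two quartic factors is forced to be $\Q(\sqrt{-3})$; translating this back into the value of $\chi$ and matching it against conjugation by $S_4\setminus\Gamma$ is the step that requires the most care, and is where the equianharmonic/$\zeta_3$ coincidence does the real work.
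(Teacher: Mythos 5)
Your core reduction --- ``rational equivariant $\Rightarrow$ symmetric in $\Phi,\Psi$ $\Rightarrow$ octahedral'' --- is indeed the engine of the paper's proof, but the Galois-theoretic justification you give for it is wrong, and the model-independence step you yourself flag as the main obstacle is not merely delicate: it is false, which is why the paper states Theorem \ref{thm_no_tetra} only for the tetrahedral group \emph{as represented above}. First, the character $\chi\colon G_\Q\to\operatorname{Out}(\Gamma)$: in the paper's representation $\Gamma$ is generated by matrices with entries in $\Q(i)$, so any $\sigma$ with $\sigma(i)=i$ and $\sigma(\sqrt{-3})=-\sqrt{-3}$ fixes $\Gamma$ element-wise --- in particular it inverts no order-three $\tau$ --- and yet it swaps $\Phi\leftrightarrow\Psi$; moreover complex conjugation itself acts on this $\Gamma$ as conjugation by the element $z\mapsto -1/z\in\Gamma$, an inner automorphism. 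So $\chi$ is trivial for the paper's model, and your inference ``$\tau$ multiplies $\Phi$ by $\zeta_3$, hence $\sqrt{-3}$-conjugation inverts $\tau$'' conflates the Galois action on group elements with the action on character values: $\sigma$ sends the scalar $\zeta_3$ to $\zeta_3^{-1}$ and the form $\Phi$ to $\Psi$ while fixing $\tau$. The paper needs no $\operatorname{Out}$-valued character at all: it quotes Blichfeldt to write every relative invariant in terms of $t_1,t_2,t_3$, observes that rationality forces equal exponents on $t_1,t_2$ precisely because $\sqrt{-3}\mapsto-\sqrt{-3}$ swaps these \emph{forms}, notes $t_1^{a}t_2^{a}t_3^{b}=s_2^{a}s_1^{b}$ is octahedral, and handles the general Doyle--McMullen map $[xF/2+G_y,\,yF/2-G_x]$ by coefficient matching to force $F,G\in\Q[x,y]$. (Your alternative route to that last step, via uniqueness and hence Galois-equivariance of the decomposition $\theta=F\lambda+dG$, is sound and arguably cleaner than the paper's computation.)

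Second, the twist issue cannot be repaired, because the representation-free statement you set out to prove is false. Your proposed fix --- that equianharmonicity forces the two quartic factors of $W=\Phi\Psi$ to split only over $\Q(\sqrt{-3})$ --- fails: only the unordered cross-ratio invariant is rational, not the cross-ratio value for a chosen ordering, and rational equianharmonic quadruples exist. Concretely, $T_1=\{0,1,\omega,\omega^2\}$ (zeros of $q_1=x^4-xy^3$, with $\omega$ a primitive cube root of unity) is equianharmonic, its Hessian tetrahedron $q_2=y(8x^3+y^3)$ is also rational, and Klein's construction (\ref{eq_Klein}) applied to $q_1$ gives the rational morphism $f=[3xy^2 : 4x^3-y^3]$, i.e.\ $f(z)=3z/(4z^3-1)$. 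One checks $\Fix(f)=T_1$ exactly, so any automorphism preserves $T_1$ and $\Aut(f)\subseteq\mathrm{Stab}_{\PGL_2}(T_1)\cong A_4$, while $\mathrm{Stab}(T_1)\subseteq\Aut(f)$ by Klein (for instance $f(\omega z)=\omega f(z)$, and $z\mapsto(1-z)/(2z+1)$ commutes with $f$). Hence $\Aut(f)\cong A_4$ for a map defined over $\Q$: for this $\Q$-rational twist of the tetrahedral configuration both tetrahedra are individually rational, the swap character is trivial, and the obstruction evaporates. Any correct proof must therefore, as the paper's does, fix the representation; the $\Q(\sqrt{-3})$-conjugacy of the pair $(\Phi,\Psi)$ is a hypothesis about the chosen model, not a consequence of $\Aut(f)$ being tetrahedral.
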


    In Section \ref{sect.P2}, we give the classification of finite subgroups of $\PGL_3$ in modern notation as Silverman did for $\PGL_2$ \cite{Silverman12}. In Section \ref{sect_bound}, we prove in Theorem \ref{thm_degree_bound} an explicit bound on the size of $\Aut(f)$ in terms of the degree of $f$.  This degree bound is a key piece of extending the algorithm of Faber-Manes-Viray \cite{FMV} to $\P^2$ in Section \ref{sect.alg}.
    \begin{thm*}[Theorem \ref{thm_degree_bound}]
        Let $f:\P^2 \to \P^2$ be a morphism of degree $d \geq 2$. Then
        \begin{equation*}
            \#\Aut(f) \leq 6d^6.
        \end{equation*}
    \end{thm*}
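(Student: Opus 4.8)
The plan is to exploit that an automorphism is a dynamical symmetry: if $\alpha \in \Aut(f)$ then $f^{\alpha}=f$ gives $\alpha \circ f = f \circ \alpha$, so $\alpha$ permutes the fixed-point scheme $\Fix(f)$ and, more generally, each set $\Fix(f^n)$ of $n$-periodic points. I would first record the Lefschetz/Bezout count that a morphism $f:\P^2 \to \P^2$ of degree $d$ has exactly $d^2+d+1$ fixed points counted with multiplicity, so that (away from the degenerate possibility of a positive-dimensional fixed locus) $\Fix(f)$ consists of at most $d^2+d+1$ points on which $\Aut(f)$ acts.

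The next ingredient is the rigidity of $\PGL_3$: a finite-order $\alpha$ is diagonalizable, and if its three eigenvalues are distinct then $\Fix(\alpha)\subset \P^2$ is exactly its three eigenpoints, while a repeated eigenvalue produces a pointwise-fixed line together with the opposite vertex. Consequently a diagonal $\alpha$ of order $m$ with distinct eigenvalues acts freely on $\Fix(f)$ outside its (at most three) eigenpoints, so every nontrivial orbit has size exactly $m$; since such an orbit lies inside $\Fix(f)$, this forces $m \le \#\Fix(f) \le d^2+d+1$. I thus obtain a uniform bound on the order of every diagonalizable automorphism.

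To convert this exponent bound into an order bound I invoke the classification of finite subgroups of $\PGL_3$ from the preceding section, splitting into the primitive, intransitive, and imprimitive cases. Primitive subgroups form a finite list of order at most $360 \le 6d^6$, which holds already for $d=2$ since $6\cdot 2^6 = 384$. The intransitive (reducible) case fixes a point and a line and maps, via the induced action on the fixed line, to a finite subgroup of $\PGL_2$, which together with its abelian kernel is comfortably dominated by $6d^6$. The extremal case is the imprimitive one: here $G=\Aut(f)$ preserves a triangle, so there is a normal diagonal abelian subgroup $A$ with $G/A \hookrightarrow S_3$, whence $\#G \le 6\,\#A$. As $A$ is a finite subgroup of the rank-two diagonal torus, $A \cong C_r \times C_s$ with $r\mid s$, so $\#A \le (\exp A)^2 \le (d^2+d+1)^2$. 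Finally $d^2+d+1 \le d^3$ for all $d\ge 2$ yields $\#A \le d^6$ and hence $\#\Aut(f) \le 6d^6$.

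The main obstacle is the accounting in the degenerate configurations that the clean free-orbit count does not reach: diagonal elements with a repeated eigenvalue, whose pointwise-fixed line may contain most or all of $\Fix(f)$, and morphisms whose fixed locus is positive-dimensional. For the first I would bound the order by studying the action on the fixed points lying off the fixed line, and, if none exist, on the transverse $f$-action along that line or by passing to $2$-periodic points, whose count $d^4+d^2+1$ still obeys the same inequalities. For the second I would treat a fixed curve separately, using that $\Aut(f)$ then also acts on that curve and on the finitely many residual fixed points. Verifying that these boundary cases never exceed the $6d^6$ produced by the generic count is where the real work lies.
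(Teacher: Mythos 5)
Your second step---reducing via the classification of finite subgroups of $\PGL_3$ to a normal abelian/diagonal part of index at most $6$ (or a bounded primitive list), and then squaring a bound on cyclic orders---is essentially the paper's route: its Lemma \ref{lem_index} verifies case by case that the largest cyclic subgroup $C_n$ has index at most $6n$, whence $\#\Aut(f)\leq 6n^2$. The genuine gap is in your first step, the bound $n\leq d^2+d+1$ on the order of a diagonalizable automorphism, which is precisely the ingredient the paper does \emph{not} prove by fixed-point counting; it instead cites Levy's theorem that an abelian subgroup of $\Aut(f)$ has order at most $d^{N+1}=d^3$, and then concludes $\#\Aut(f)\leq 6n^2\leq 6d^6$. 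Your free-orbit argument fails as stated: a diagonal $\alpha$ of order $m$ with distinct eigenvalues acts freely only off the \emph{triangle of eigenlines}, not just off its three eigenpoints, because a proper power $\alpha^k$ can have a repeated eigenvalue and then fixes an entire line pointwise; points of $\Fix(f)$ on that triangle have orbits of size a proper divisor of $m$. Since $\Fix(f)$ contains only $d^2+d+1$ points with multiplicity (and possibly far fewer distinct ones), it can happen that every fixed point of $f$ lies on the triangle, in which case no orbit of size $m$ exists and the count gives no bound on $m$ at all. Your fallback of passing to $2$-periodic points does not repair this: the degeneracy can persist, and while periodic points are eventually Zariski dense (so some period $n$ produces a point off the triangle with orbit of size exactly $m$), without an effective control on that $n$ the resulting bound $m\leq d^{2n}+d^n+1$ is not uniform in $d$.

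It is worth noting that the inequality you want, $n\leq d^2+d+1$, is strictly stronger than what the paper establishes: after exhibiting $f=(y^d:z^d:x^d)$ with a cyclic automorphism of order exactly $d^2+d+1$, the authors explicitly remark that Levy's result has exponent $3$ where this example has exponent $2$, and they leave the optimal bound open. So your program, if completed, would sharpen the theorem to $\#\Aut(f)\leq 6(d^2+d+1)^2$, but as written the crucial lemma is unproven---as you yourself concede in saying that verifying the boundary cases ``is where the real work lies.'' The quickest repair is to replace your fixed-point count with the citation to Levy's abelian bound $n\leq d^3$; with that substitution, your skeleton (triangle-stabilizing case with $\#G\leq 6\#A$ and $\#A\leq(\exp A)^2$, intransitive case mapping to a $\PGL_2$ subgroup, and the finite primitive list of order at most $360\leq 6\cdot 2^6$) closes along the same lines as the paper's Lemma \ref{lem_index}, though the intransitive case you wave off as ``comfortably dominated'' still needs the short case analysis the paper carries out for its families (B1)--(B4).
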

    For $\P^1$ there is the bound $\#\Aut(f) \leq \max(60,2d+2)$ \cite[Example 2.52]{Silverman20}.
    More generally, Levy proved the existence of such a bound for $\P^N$ but did not formulate an explicit (nontrivial) value.  Levy-Manes-Thompson \cite{Levy2} use the existence of such a bound to prove that the number of rational preperiodic points in a family of twists is bounded. Their bound is possible because the size of the automorphism group bounds the degree of the field of definition of any twist. It would be an interesting question to determine an optimal bound.

    In Section \ref{sect.alg}, we switch to the problem of determining the automorphism group of a given morphism. We define the automorphism scheme for $ \P^N$ and prove in Theorem \ref{thm_aut_scheme} that it is a closed finite group scheme as in \cite{FMV} for $\P^1$.
    \begin{thm*}[Theorem \ref{thm_aut_scheme}]
        Let $R$ be a noetherian commutative ring and let $f:\P^N_R \to \P^N_R$ be a morphism of degree at least $2$. Then the functor $\Aut_f$ is represented by a closed finite $R$-subgroup scheme $\Aut(f) \subset \PGL_{N+1}$.
    \end{thm*}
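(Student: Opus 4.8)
The plan is to realize $\Aut(f)$ as the stabilizer for the conjugation action on the parameter space of endomorphisms and then force finiteness through the valuative criterion, with the degree hypothesis entering via a resultant computation. Throughout, let $\Hom^d_N \subset \P^M_R$ (with $M+1 = (N+1)\binom{N+d}{d}$) denote the $R$-scheme of degree-$d$ endomorphisms of $\P^N$, i.e.\ the complement of the resultant hypersurface $\{\Res = 0\}$; it is of finite type and separated over $R$, and conjugation $\PGL_{N+1}\times_R \Hom^d_N \to \Hom^d_N$, $(\alpha,g)\mapsto g^{\alpha}$, is a morphism of $R$-schemes.

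First I would settle representability and the closed-subgroup structure, which are formal. For an $R$-algebra $S$ the set $\Aut_f(S)$ is the stabilizer of $f\in\Hom^d_N(S)$ under $\PGL_{N+1}(S)$, so $\Aut_f$ is the equalizer of the orbit morphism $\mu_f\colon\alpha\mapsto f^{\alpha}$ and the constant morphism $c_f\colon\alpha\mapsto f$; equivalently $\Aut(f) = (\mu_f,c_f)^{-1}(\Delta)$ for the diagonal $\Delta\subset \Hom^d_N\times_R\Hom^d_N$. Since $\Hom^d_N$ is separated, $\Delta$ is a closed immersion, hence so is $\Aut(f)\hookrightarrow\PGL_{N+1}$; this proves representability by a closed subscheme. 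The subgroup axioms follow because the identity stabilizes $f$ and, functorially, if $\alpha,\beta$ stabilize $f$ over $S$ then so do $\alpha\beta$ and $\alpha^{-1}$, so multiplication and inversion restrict.

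The substantive part is finiteness, and I expect the valuative criterion to be the main obstacle. Being closed in the affine $R$-group $\PGL_{N+1}$, $\Aut(f)$ is affine over $R$; as a morphism is finite exactly when it is affine and proper, it suffices to prove properness. Since $\Aut(f)$ is separated (closed in the separated $\PGL_{N+1}$) and of finite type over the noetherian ring $R$, I would verify the valuative criterion over an arbitrary discrete valuation ring $V$ with fraction field $K$, uniformizer $\pi$, residue field $\kappa$, and valuation $v$: uniqueness is automatic from separatedness, so only existence of a lift must be shown. Concretely, given $\alpha\in\Aut(f)(K)\subset\PGL_{N+1}(K)$ over a map $\Spec{V}\to\Spec{R}$, I must show $\alpha$ extends to $\PGL_{N+1}(V)$; once it does, the closed condition $f^{\alpha}=f$ propagates the extension into $\Aut(f)(V)$.

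Here the hypothesis $d\ge 2$ does its work. Write $f_V$ for the base change, which is still a morphism, and fix an integral primitive homogeneous representative $\tilde f=(f_0,\dots,f_N)$ over $V$ with reduction $\bar f$ the special-fibre morphism; because $f_V$ is a morphism on the special fibre as well, $\Res(\tilde f)\in V^{\times}$ (good reduction). Lift $\alpha$ to $A\in\GL_{N+1}(K)$ and rescale so that $A$ has entries in $V$ and is primitive, i.e.\ its reduction $\bar A$ modulo $\pi$ is nonzero; then $f^{\alpha}=f$ becomes $A\cdot\tilde f(x)=c\,\tilde f(Ax)$ for some $c\in K^{\times}$. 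The key step is $c\in V^{\times}$: the tuple $\tilde f(Ax)$ is primitive because $\bar f$ has no common zero and $\bar A\neq 0$ forces $\bar f(\bar A x)\not\equiv 0$, while $A\cdot\tilde f(x)$ is primitive because $\bar f$ is surjective and so its image is not contained in the proper linear subspace $\P(\ker\bar A)$; two primitive integral tuples equal up to the factor $c$ force $c$ to be a unit. I would then apply $\Res$ to $A\cdot\tilde f(x)=c\,\tilde f(Ax)$ and use the classical transformation laws $\Res(A\cdot g)=(\det A)^{d^N}\Res(g)$, $\Res(g\circ A)=(\det A)^{d^{N+1}}\Res(g)$, and $\Res(c\,g)=c^{(N+1)d^N}\Res(g)$ (see, e.g., \cite{Silverman20}). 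Cancelling the unit $\Res(\tilde f)$ and taking valuations yields
\begin{equation*}
 d^N\,v(\det A) = (N+1)d^N\,v(c) + d^{N+1}\,v(\det A),
\end{equation*}
and with $v(c)=0$ this reads $d^N(d-1)\,v(\det A)=0$. Because $d\ge 2$ we get $v(\det A)=0$, so $\bar A$ is invertible and $\alpha\in\PGL_{N+1}(V)$, completing the valuative criterion. Hence $\Aut(f)$ is proper and affine, therefore finite, over $R$.
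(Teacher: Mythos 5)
Your proposal is correct, and it follows the same overall skeleton as the paper's proof (realize $\Aut_f$ as a closed subscheme of $\PGL_{N+1}$, check the group axioms functorially, then obtain finiteness from properness via the valuative criterion, with $d\geq 2$ and good reduction doing the real work), but it differs from the paper in three substantive choices. First, for representability the paper writes out the defining equations explicitly---the bihomogeneous conditions $f_ig_j - f_jg_i = 0$ in the coefficients of $f^{\alpha}$ viewed as polynomials in $R[\alpha_{ij}]$---whereas you obtain closedness abstractly as the preimage of the diagonal of the separated scheme $\Hom_N^d$ under $(\mu_f,c_f)$; this is a cosmetic difference (both tacitly assume a global representative of $f$ over $R$ with unit Macaulay resultant). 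Second, and more interestingly, the paper deduces finiteness from \emph{quasi-finite and proper} via Zariski's main theorem, where quasi-finiteness rests on the external input that $\Aut_f(K)$ is finite for algebraically closed $K$ \cite[Prop 4.65]{ADS}; you instead use \emph{affine and proper implies finite}, with affineness free because $\Aut(f)$ is closed in the affine $R$-group $\PGL_{N+1}$. This removes the dependence on the fiberwise finiteness theorem entirely and recovers it as a corollary rather than consuming it as input. Third, in the existence half of the valuative criterion the paper quotes Lemma \ref{lem_reduction} of Petsche--Szpiro--Tepper \cite{petsche}, while you reprove the needed case ($g=f$) from scratch: the primitivity/content argument showing $v(c)=0$ (using that $\bar{f}$ has no common zero and is surjective, so neither side of $A\cdot\tilde f(x) = c\,\tilde f(Ax)$ reduces to zero) followed by the Macaulay resultant transformation laws, yielding $d^N(d-1)\,v(\det A)=0$ and hence $v(\det A)=0$. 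Your computation checks out---the exponents $d^N$, $d^{N+1}$, and $(N+1)d^N$ are the correct ones---and it has the virtue of making completely explicit where the hypothesis $d\geq 2$ enters (the equation degenerates precisely at $d=1$, consistent with automorphism groups of linear maps being infinite). The cost is that you have, in effect, re-derived the content of the cited lemma; the benefit is a self-contained proof whose only black boxes are standard facts about resultants and the theorem that proper affine morphisms are finite.
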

    Using the bound on the size of the automorphism group in terms of the degree from Section \ref{sect_bound} and an analysis of the combinatorics of preperiodic points, we are able to generalize the fixed point algorithm of \cite{FMV} to morphisms of $\P^2$. For base field $\Q$, the algorithm is implemented in the Sage computer algebra system \cite{sage}.

    Section \ref{sect.alg.exmp} is devoted to examples. In Section \ref{sect.exmp_dim2} we use invariant theory to determine maps $f:\P^2 \to \P^2$ with specific groups $\Gamma$ with $\Gamma \subset \Aut(f)$. In Section \ref{sect.FP.exmp} we give examples of computing the automorphism of a given $f:\P^2 \to \P^2$. Also in this section we discuss some of the running time issues of the algorithm in general and, in particular, with our Sage implementation.

\section{Invariant Theory} \label{sect.invariant}
    In this section we summarize results from invariant theory in a way suitable for application to automorphism groups of projective morphisms. The invariant theory of finite groups is well developed and combines elements of combinatorics and computer algebra. The main references for this material are \cite{gatermann,smith2,Sturmfels2,Worfolk}.

    Let $\Gamma$ be a finite group, $K$ a field of coefficients, $V$ a finite dimensional $K$-vector space, and $\rho:\Gamma \to \GL(V)$ a representation of $\Gamma$ on $V$, By means of $\rho$, the group $\Gamma$ acts on $V$ through linear substitutions. Denote $K[V]$ the algebra of polynomial functions, which is the symmetric algebra on the dual $V^{\ast}$. In other words, if $V$ is an $N$ dimensional vector space and $x_1,\ldots,x_N$ is a basis for $V^{\ast}$, then $K[V] = K[x_1,\ldots,x_N]$ which we denote by $K[\bar{x}]$. The group $\Gamma$ acts on $K[V]$ by
    \begin{equation*}
        (\gamma f)(v) = f(\rho(\gamma^{-1})v).
    \end{equation*}

    \begin{defn}
       We say that $F$ is a \emph{(relative) invariant} of $\Gamma$ if for all $\gamma \in \Gamma$, $\gamma F = \chi(\gamma)F$ for some linear group character $\chi$.  The set of all invariants is a ring denoted $K[\bar{x}]^{\Gamma}$. We will denote $K[\bar{x}]^{\Gamma}_{\chi}$ the ring of relative invariants associated to the character $\chi$.
    \end{defn}
    Hilbert proved that for a reducible representation of $\Gamma$, the ring of invariants $K[\bar{x}]^{\Gamma}$ is finitely generated.
    Furthermore, Eagon and Hochester proved that if the order of the group is relatively prime to $\ch{K}$, then $K[\bar{x}]^{\Gamma}$ is Cohen-Macaulay \cite{Eagon}. In particular, there are (algebraically independent) homogeneous invariants $p_1,\ldots, p_N$ and $s_1,\ldots, s_m$, where $m=\frac{1}{\abs{\Gamma}}\prod \deg(p_i)$ such that $K[\bar{x}]^{\Gamma}$ is generated by $s_1,\ldots,s_m$ as a $K[p_1,\ldots,p_n]$ module. The $p_i$ are called \emph{primary invariants} and the $s_i$ are called \emph{secondary invariants}.
     For a finite group in characteristic zero (and in more general situations), there are well-established algorithms to compute the primary and secondary invariants \cite{Kemper,Sturmfels2}.

    The Molien series and the Reynolds (projection) operator are two tools from invariant theory that allow one to compute invariants.
    \begin{defn}
        The \emph{Molien series} for a finite group $\Gamma$ and a linear group character $\chi$ is given by
        \begin{equation*}
            H_{K[\bar{x}]^{\Gamma}}(t) = \frac{1}{\abs{\Gamma}}\sum_{\gamma \in \Gamma} \frac{\chi(\gamma)}{\det(1-t\gamma)}.
        \end{equation*}
        This was shown by Molien (1897) to be the Hilbert-Poincar\'e Series of $K[\bar{x}]^{\Gamma}$ generated by $\chi$-invariants \cite{gatermann}.  That is,
        \begin{equation*}
        \frac{1}{\abs{\Gamma}}\sum_{\gamma \in \Gamma} \frac{\chi(\gamma)}{\det(1-t\gamma)}
        = \sum_{k=0}^{\infty}\dim\left(K[\bar x]_k^{\Gamma}\right) t^k,
        \end{equation*}
        where $K[\bar x]_k^{\Gamma}$ is the space of degree $k$ invariants.

        The \emph{Reynolds operator} for a finite group $\Gamma$ and a linear group character $\chi$ is the projection operator $R_{\chi} : K[\overline{x}] \to K[\overline{x}]^{\Gamma}_{\chi}$ given by
        \begin{align*}
            R_{\chi}(F) &= \frac{1}{\abs{\Gamma}}\sum_{\gamma \in \Gamma} \chi(\gamma) \gamma F.
        \end{align*}
    \end{defn}

    Noether gave an upper bound of $\abs{\Gamma}$ for the degrees of the primary invariants \cite{Noether}. Furthermore \cite[\S3.5]{Derksen}, the degrees, $e_1,\ldots,e_m$, of the secondary invariants are given by
    \begin{equation*}
        t^{e_1} + \cdots + t^{e_m} = H_{K[\bar{x}]^{\Gamma}}(t) \cdot \prod_{i=1}^N (1-t^{\deg(p_i)}).
    \end{equation*}

    Combined with Klein's result in equation (\ref{eq_Klein}) below, this is enough to compute maps whose automorphism groups contain $\Gamma$ in dimension $1$. In higher dimensions, we need invariant $N$-forms and to apply Theorem \ref{thm_form_aut}. We now discuss the needed theory \cite[Ch. 9]{smith} or \cite[Ch. 5]{Benson}.

    The action of $\Gamma$ on the vector space $V$ extends not just to $K[V]$, as we have seen, but also to the exterior algebra $E[V]$. Since we have an action on both $K[V]$ and $E[V]$, then we have an action on the tensor $K[V] \otimes E[V]$; see, for example, Soloman \cite{Soloman}. We think of $K[V] \otimes E[V]$ as the \emph{algebra of polynomial differential forms} on $V$ by introducing the exterior derivative $d$.  With $K[V] = K[x_1,\ldots,x_N]$, then $K[V] \otimes E[V]$ can be thought of as the Grassmann algebra of differential forms $K[x_1,\ldots,x_N] \otimes \Lambda(dx_1,\ldots, dx_N)$ so that a typical element is a linear combination of terms of the form $fdx_{i_1} \wedge \cdots \wedge dx_{i_j}$, \cite[Ch. 5]{Benson}.  Thus, elements of $(K[V] \otimes E[V])^{\Gamma}$ are $\Gamma$ invariant differential forms on $V$.  Furthermore, since $d$ commutes with the action of $\Gamma$, we have if $f \in K[V]^{\Gamma}$, then $df \in (K[V] \otimes E[V])^{\Gamma}$. In particular, if $F$ is an invariant $p$-form of degree $m$, then $dF$ is an invariant $(p+1)$-form of degree $m-1$. Given also an invariant $q$-form $G$ of degree $n$, then $F \wedge G$ is an invariant $(p+q)$-form of degree $m+n$.

    There is a 2-variable Molien series for $N$-forms.
    \begin{defn}
        The \emph{Molien series} for $s$-forms of degree $t$ for a finite group $\Gamma$ and a linear group character $\chi$ is given by
        \begin{equation*}
            \frac{1}{\abs{\Gamma}}\sum_{\gamma \in \Gamma} \frac{\chi(\gamma)\det(1+s\gamma)}{\det(1-t\gamma)},
        \end{equation*}
        where $s$ is the rank of the form and $t$ is the polynomial degree.
        This was shown by Solomon \cite[p. 270]{smith2} to be the Hilbert-Poincar\'e Series of $(K[V] \otimes E[V])^{\Gamma}$ generated by $\chi$-invariants.  That is,
        \begin{equation*}
            \frac{1}{\abs{\Gamma}}\sum_{\gamma \in \Gamma} \frac{\chi(\gamma)\det(1+s\gamma)}{\det(1-t\gamma)} = \sum_{p=0}^N \left( \sum_{m=0}^{\infty} \left(\dim \bigwedge_p(K^N)^{\Gamma}_m\right)t^m\right)s^p.
        \end{equation*}
    \end{defn}
    It is also possible to define a Reynolds operator. Thus, we can determine the degrees of the invariant $N$-forms with the Molien series, find them with the Reynolds operator, and convert them to endomorphisms of $\P^N$ with Theorem \ref{thm_form_aut}.

    Note that since there are $(N+1)$ primary invariants $p_0,\ldots,p_N$ for a finite subgroup of $\PGL_{N+1}$. Then we can always construct an invariant $(N+1)$-form $dp_0 \wedge \cdots \wedge dp_N$.

 \subsection{Equivariants}
    We can also attack the problem of maps with nontrivial automorphisms directly with invariant theory by studying \emph{equivariants} \cite{gatermann, Worfolk}. We follow the presentation of Worfolk \cite{Worfolk}. Let $\Gamma$ be a finite group and $V,W$ be finite dimensional vector spaces with $\rho_V:\Gamma \to GL(V)$ and $\rho_W: \Gamma \to \GL(W)$. We have already seen the action on $K[V]$ and $K[W]$ as
    \begin{equation*}
        (\gamma f)(v) = f(\rho(\gamma^{-1})v).
    \end{equation*}
    We can identify the space of polynomial mappings from $V$ to $W$ with $K[V] \otimes W$.
    \begin{defn}
        The polynomial mappings that commute with $\Gamma$ are called \emph{equivariants} (or sometimes \emph{covariants}), and we denote them as
        \begin{equation*}
            (K[V] \otimes W)^{\Gamma} = \{ g \in K[V] \otimes W \col g \circ \rho_V(\gamma) = \rho_W(\gamma)g\}.
        \end{equation*}
    \end{defn}
    \begin{rem}
        Note that $g$ being an equivariant is the same as $g$ being invariant under the action of conjugation.
    \end{rem}
    As with invariants, there is a finite set of generators for equivariants as a $K[V]^{\Gamma}$ module and there is a similar Neother bound of $\abs{\Gamma}-1$ for the degree of the generators \cite[Proposition 4.3]{Worfolk}.
    Thus, the equivariant mappings form a module over the ring of invariants and, for finite groups, the module of equivariants is finitely generated by a set of (computable) fundamental equivariants,        i.e., $(K[V] \otimes W)^{\Gamma}$ is a Cohen-Macaulay module.

    \begin{prop}{\cite[Proposition 4.8]{Worfolk}} \label{prop_fund_equi}
        For $\Gamma$ finite and $p_1,\ldots,p_N$ a set of primary invariants for $K[V]^{\Gamma}$ with degrees $d_1,\ldots, d_N$, the number of fundamental equivariants $s$ is given by
        \begin{equation*}
            s = m\frac{d_1\cdots d_N}{\abs{\Gamma}}
        \end{equation*}
        where $m = \dim(W)$. The degrees $e_1,\ldots, e_s$ (with multiplicity) of the fundamental equivariants are given by \begin{equation*}
            t^{e_1} + \cdots + t^{e_s} = \Psi(t)\cdot(1-t^{d_1}) \cdots (1-t^{d_N}),
        \end{equation*}
        where $\Psi(t)$ is the equivariant Molien series.
    \end{prop}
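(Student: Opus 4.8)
The plan is to exploit the Cohen--Macaulay structure of the module of equivariants in order to reduce both assertions to a single Hilbert series identity. Since $p_1,\ldots,p_N$ is a homogeneous system of parameters for $K[V]^{\Gamma}$, and $(K[V] \otimes W)^{\Gamma}$ is a Cohen--Macaulay module over $K[V]^{\Gamma}$ (hence over the polynomial subring $A = K[p_1,\ldots,p_N]$), the module is \emph{free} over $A$ by Hironaka's criterion that a finitely generated graded Cohen--Macaulay module over a graded polynomial ring is free. Let $q_1,\ldots,q_s$ be a homogeneous $A$-basis, with degrees $e_1,\ldots,e_s$; these are precisely the fundamental equivariants. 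Everything then follows by comparing two expressions for the Hilbert--Poincar\'e series of $(K[V] \otimes W)^{\Gamma}$.

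For the degree formula, I would compute the Hilbert series of the free module $\bigoplus_{j=1}^s A \cdot q_j$ directly. As $A$ is a polynomial ring on generators of degrees $d_1,\ldots,d_N$, its Hilbert series is $\prod_{i=1}^N (1-t^{d_i})^{-1}$, and shifting by the generator degrees gives
\begin{equation*}
    H_{(K[V] \otimes W)^{\Gamma}}(t) = \frac{t^{e_1} + \cdots + t^{e_s}}{\prod_{i=1}^N (1-t^{d_i})}.
\end{equation*}
By definition this Hilbert series is the equivariant Molien series $\Psi(t)$. Equating the two and clearing denominators yields $t^{e_1} + \cdots + t^{e_s} = \Psi(t) \prod_{i=1}^N (1-t^{d_i})$ at once.

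For the count $s$, I would evaluate the numerator at $t=1$, since $s = (t^{e_1} + \cdots + t^{e_s})|_{t=1}$. Writing $1 - t^{d_i} = (1-t)(1 + t + \cdots + t^{d_i - 1})$, the product $\prod_i (1 + \cdots + t^{d_i-1})$ tends to $d_1 \cdots d_N$ as $t \to 1$, so it remains to compute the leading Laurent coefficient $\lim_{t \to 1} (1-t)^N \Psi(t)$ at the pole $t=1$. Writing $\Psi(t) = |\Gamma|^{-1} \sum_{\gamma} \chi_W(\gamma) \det(1 - t\gamma)^{-1}$, the order of the pole contributed by each $\gamma$ equals the multiplicity of the eigenvalue $1$, namely $\dim \Fix(\gamma)$; as $\rho_V$ is faithful, only $\gamma = e$ attains the full order $N$, with residue $\chi_W(e) = \dim W = m$. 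Hence $\lim_{t\to 1}(1-t)^N \Psi(t) = m/|\Gamma|$, and combining the two limits gives $s = m\, d_1 \cdots d_N / |\Gamma|$.

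The main obstacle is the freeness step: passing from ``Cohen--Macaulay module over $K[V]^{\Gamma}$'' to ``free module over the polynomial subring $A$'' relies on the primary invariants forming a genuine homogeneous system of parameters, so that $(K[V] \otimes W)^{\Gamma}$ is module-finite over $A$, together with Hironaka's local criterion. Once freeness is secured, the remainder is bookkeeping with Hilbert series. A secondary subtlety is the pole analysis, where one must confirm that no non-identity $\gamma$ fixes all of $V$; this is exactly faithfulness of $\rho_V$, and were a kernel permitted the residue would absorb its order and alter the count.
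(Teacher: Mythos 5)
Your proof is correct, but note that the paper itself offers no argument here: it quotes this statement verbatim from Worfolk's Proposition 4.8 and moves on, so the only meaningful comparison is with the standard proof in the literature, which yours essentially reproduces. The chain you use --- $(K[V]\otimes W)^{\Gamma}$ is finitely generated and Cohen--Macaulay over $A=K[p_1,\ldots,p_N]$ (module-finiteness coming from Noether plus the defining property of primary invariants, the Cohen--Macaulay property from the Reynolds splitting in the non-modular case), hence free by the graded Auslander--Buchsbaum/Hironaka argument; then the two expressions for the Hilbert--Poincar\'e series give the degree identity, and the residue of $\Psi(t)$ at $t=1$ gives the count --- is exactly the Stanley--Springer-style argument underlying Worfolk's proof, and both subtleties you flag are the right ones. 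In particular you are correct that faithfulness of $\rho_V$ is what forces only $\gamma=e$ to contribute a full order-$N$ pole; in the paper's setting this is automatic since $\Gamma$ is realized as a matrix group inside $\GL(V)$, but it is worth having made it explicit, since with kernel $H$ the residue becomes $\frac{\abs{H}}{\abs{\Gamma}}\dim W^{H}$ and the count changes accordingly. Two small points of hygiene: the numerator in the equivariant Molien series is $\tr(\rho_W(\gamma^{-1}))$ (as in the paper's statement), not $\chi_W(\gamma)$ --- harmless for your residue computation since the two agree at $\gamma=e$, but the distinction matters if one wanted the full series identity over $\C$; and your evaluation at $t=1$ implicitly uses that $\Psi(t)\prod_i(1-t^{d_i})$ is a polynomial, which is legitimate because you derive that identity first, so the limit computation is a valid way to read off $s$. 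The one hypothesis you should state rather than inherit silently is non-modularity ($\ch K \nmid \abs{\Gamma}$), since both the Reynolds operator and the Cohen--Macaulay property fail without it, and the proposition as stated says only ``$\Gamma$ finite.''
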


    We can find the finitely many generators with a Reynolds operator calculation. The existence of an equivariant Molien series makes this process somewhat easier.

    \begin{prop}
        The \emph{equivariant Molien series} for a finite group $\Gamma$ and a linear group character $\chi$ is given by
        \begin{equation*}
            \Psi(t) = \frac{1}{\abs{\Gamma}}\sum_{\gamma \in \Gamma} \frac{\chi(\gamma)\tr(\gamma^{-1})}{\det(1-t\gamma)}
        \end{equation*}
        and is the Hilbert-Poincar\'e series for $\chi$-equivariants \cite{gatermann} or \cite[Theorem 4.6]{Worfolk}.
    \end{prop}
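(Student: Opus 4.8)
The plan is to recognize the Hilbert--Poincar\'e series as a graded trace computation, exactly as in Molien's original theorem, the single new ingredient being the tensor factor $W$. By the preceding Remark, the $\chi$-equivariants are precisely the $\chi$-relative invariants of $K[V] \otimes W$ under the conjugation (tensor) action, where $\Gamma$ acts on $K[V]$ by $(\gamma f)(v) = f(\rho_V(\gamma^{-1})v)$ and on $W$ by $\rho_W(\gamma)$. Grading $K[V]\otimes W$ by polynomial degree, the degree-$m$ piece is $\Sym^m(V^*) \otimes W$, so the quantity to compute is $\Psi(t) = \sum_m \dim\left((\Sym^m(V^*)\otimes W)^{\Gamma}_{\chi}\right)t^m$.

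First I would express each graded dimension as the trace of an averaging projector. For a one-dimensional character $\chi$ and a representation $U$ of $\Gamma$ with $\abs{\Gamma}$ prime to $\ch K$, the Reynolds operator $P_{\chi} = \frac{1}{\abs{\Gamma}}\sum_{\gamma}\chi(\gamma^{-1})\gamma$ is idempotent with image the $\chi$-relative invariants of $U$, so the dimension of that image equals its trace:
\[
  \dim\left((\Sym^m(V^*)\otimes W)^{\Gamma}_{\chi}\right) = \frac{1}{\abs{\Gamma}}\sum_{\gamma\in\Gamma}\chi(\gamma^{-1})\,\tr\left(\gamma \mid \Sym^m(V^*)\otimes W\right).
\]
Next I would invoke multiplicativity of the trace over tensor products, $\tr(\gamma \mid \Sym^m(V^*)\otimes W) = \tr(\gamma \mid \Sym^m(V^*))\cdot\tr(\rho_W(\gamma))$, together with the standard generating identity for the symmetric algebra: writing the eigenvalues of $\rho_V(\gamma)$ as $\lambda_1,\dots,\lambda_n$, the contragredient action on $V^*$ has eigenvalues $\lambda_i^{-1}$, whence $\sum_m \tr(\gamma \mid \Sym^m(V^*))t^m = \prod_i (1-t\lambda_i^{-1})^{-1} = \det(1-t\gamma^{-1})^{-1}$. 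Assembling the generating function then gives $\Psi(t) = \frac{1}{\abs{\Gamma}}\sum_{\gamma}\frac{\chi(\gamma^{-1})\tr(\rho_W(\gamma))}{\det(1-t\gamma^{-1})}$. Finally, reindexing the sum by $\gamma\mapsto\gamma^{-1}$ (harmless since we sum over all of $\Gamma$) converts this into the stated $\frac{1}{\abs{\Gamma}}\sum_{\gamma}\frac{\chi(\gamma)\tr(\gamma^{-1})}{\det(1-t\gamma)}$; the very same reindexing, applied in the case $W=K$, recovers the ordinary Molien series displayed earlier, confirming that the conventions agree.

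The computation is essentially routine, so the only real obstacle is bookkeeping with conventions. One must track carefully which representation each trace factor refers to and in which direction its argument runs: the contragredient action on $K[V]=\Sym(V^*)$ produces $\det(1-t\gamma^{-1})$ rather than $\det(1-t\gamma)$, while the target contributes $\tr(\rho_W(\gamma))$, and it is precisely the closing $\gamma\mapsto\gamma^{-1}$ reindexing that flips both simultaneously and reconciles them into the asymmetric-looking formula with $\tr(\gamma^{-1})$ in the numerator against $\det(1-t\gamma)$ in the denominator. Getting the character conjugation straight, so that $P_{\chi}$ genuinely projects onto $\{F : \gamma F = \chi(\gamma)F\}$, is the one place where a stray inverse could silently corrupt the result.
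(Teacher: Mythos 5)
Your proposal is correct: the paper itself offers no proof of this proposition, simply citing \cite[Theorem 4.6]{Worfolk} and \cite{gatermann}, and your argument is precisely the standard one given in those sources --- compute the graded dimension as the trace of the idempotent averaging operator $P_{\chi}$, factor the trace over $\Sym^m(V^*)\otimes W$, apply the eigenvalue identity $\sum_m \tr\left(\gamma \mid \Sym^m(V^*)\right)t^m = \det(1-t\gamma^{-1})^{-1}$, and reindex $\gamma \mapsto \gamma^{-1}$. Your closing remark about tracking inverses is well placed, since your projector $\frac{1}{\abs{\Gamma}}\sum_{\gamma}\chi(\gamma^{-1})\gamma$ is the internally consistent choice for the paper's convention $\gamma F = \chi(\gamma)F$, and your $W=K$ sanity check correctly recovers the ordinary Molien series as displayed earlier in the paper.
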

    \begin{defn}
        The \emph{equivariant Reynolds operator} projects onto equivariants for a linear finite group $\Gamma$ and a group character $\chi$ and is given by
        \begin{equation*}
            RE_{\chi}(f) = \frac{1}{\abs{\Gamma}}\sum_{\gamma \in \Gamma} \chi(\gamma) \gamma^{-1}  f  \gamma.
        \end{equation*}
    \end{defn}

\section{Connections to Automorphisms} \label{sect.inv_and_auto}
    We turn now to the question of dynamical systems with nontrivial stabilizer groups.  We notate the tuple $(x_0,\ldots,x_N)$ as $\overline{x}$.

    Given a morphism
    $f \colon \P^N_K \to \P^N_K$ of degree~$d$, we can represent $f$ as an $N+1$-tuple of homogeneous polynomials of degree $d$, with no common roots. Then we may define a \emph{lift} of $f$ by
    \begin{align*}
        \Phi \colon \A^{N+1} &\to \A^{N+1}\\
        (x_0, \ldots , x_N) &\mapsto (f_0(\overline x), \ldots, f_N(\overline x) ).
    \end{align*}

    Let $\Gamma$ be a finite subgroup of $\PGL_{N+1}$ and let $\overline \Gamma$ be its preimage in $\SL_{N+1}$.
    Thinking of $\Phi$ as a vector field on $\A^{N+1}$, we see that an element $\gamma \in \overline \Gamma$ acts on an element $\Phi$ by conjugation:
   \[
   \Phi^{\gamma}(\bar{x}) = \gamma \cdot \left(\Phi(\gamma^{-1}\cdot \bar{x}^T) \right)^T.
   \]
    So we may consider the action of elements of $\SL_{N+1}$ (or more generally $\GL_{N+1}$) on homogeneous polynomials in $N+1$ variables.

\subsection{Automorphisms on $\P^1$} \label{sect.4.1}

    For a map $f:\P^1 \to \P^1$, it was known as early as Klein \cite[p. 345]{klein} that given an invariant $G$ of $\Gamma$, the map
    \begin{equation} \label{eq_Klein}
        f_G = \left[-\frac{\partial G}{ \partial y}, \frac{\partial G}{ \partial x}\right]
    \end{equation}
    has $\Gamma \subseteq \Aut(f)$. Note that since we are working over projective space, $G$ can be a relative invariant.  Also, note that if $G$ has only simple zeros, then $f_G$ is a morphism of degree $(\deg(G)-1)$ \cite{HutzSzpiro}.

    Doyle-McMullen gave the general correspondence between polynomial invariants and invariant homogeneous $1$-forms. $\Gamma$-invariant homogeneous 1-forms $f_1dx + f_2dy$ correspond to maps $f = [-f_2,f_1]$ with $\Gamma \subseteq \Aut(f)$ \cite[Proposition 5.1]{Doyle}. The general connection between $n$-forms and automorphisms will be proven in Theorem \ref{thm_form_aut}.
    \begin{prop}{\cite[Theorem 5.2]{Doyle}} \label{prop_DM}
        A homogeneous 1-form $\theta$ is invariant if and only if
        \begin{equation*}
            \theta = F \lambda + dG,
        \end{equation*}
        where $F$ and $G$ are invariant homogeneous polynomials with the same character, $\deg{G} = \deg{F}+2$, and $\lambda = (xdy - ydx)/2$.
    \end{prop}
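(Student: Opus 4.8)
The plan is to prove both implications from a single explicit decomposition driven by Euler's relation for homogeneous polynomials, and then to track characters through the two natural operators involved: the exterior derivative $d$ and contraction with the Euler vector field $E = x\partial_x + y\partial_y$. Write $\theta = f_1\,dx + f_2\,dy$ with $f_1,f_2$ homogeneous of a common degree $m$, so that $\theta$ is a homogeneous $1$-form of polynomial degree $m$.

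First I would construct the two pieces directly. Set $G = \frac{1}{m+1}\left(x f_1 + y f_2\right)$, a homogeneous polynomial of degree $m+1$. Computing $dG$ and simplifying with Euler's identity $x\partial_x h + y\partial_y h = (\deg h)\,h$ applied to $f_1$ and to $f_2$, one finds
\[
(m+1)(\theta - dG) = (\partial_x f_2 - \partial_y f_1)(x\,dy - y\,dx) = 2(\partial_x f_2 - \partial_y f_1)\lambda .
\]
Hence taking $F = \frac{2}{m+1}(\partial_x f_2 - \partial_y f_1)$, a homogeneous polynomial of degree $m-1$, gives $\theta = F\lambda + dG$ with $\deg G = m+1 = (m-1)+2 = \deg F + 2$, exactly as claimed. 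Conceptually this identity is nothing but Cartan's homotopy formula $\mathcal{L}_E = d\iota_E + \iota_E d$ together with the fact that $\mathcal{L}_E$ acts as multiplication by $m+1$ on a homogeneous $1$-form of polynomial degree $m$; the formula becomes usable precisely because $m+1\neq 0$, which holds in the characteristic-zero setting (and more generally whenever $\ch K \nmid m+1$).

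It then remains to match characters, and here both directions are routine once equivariance of the operators is established. The reverse implication is immediate: since $\lambda = \tfrac12\,\iota_E(dx\wedge dy)$ is fixed by $\SL_2$ (both the area form $dx\wedge dy$, of determinant weight $1$, and the field $E$ are preserved), the form $F\lambda$ carries the character of $F$, while $dG$ carries the character of $G$; so if $F$ and $G$ are relative invariants with a common character $\chi$, then $\theta$ has character $\chi$ as well. For the forward implication I would observe that $\iota_E$ (because $E$ is $\SL_2$-invariant) and $d$ (which is natural) both commute with the action of $\overline{\Gamma}\subset\SL_2$. Therefore $G = \frac{1}{m+1}\iota_E\theta$ inherits the character $\chi$ of $\theta$, and $2F\lambda = \iota_E(d\theta)$ inherits $\chi$; since $\lambda$ is invariant, $F$ is forced to have character $\chi$ too.

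I expect the genuine mathematical content to be the decomposition identity itself, with the rest being careful bookkeeping: confirming that $\lambda$ carries the trivial character under $\overline{\Gamma}$ and that contraction with $E$ and $d$ are equivariant, which is what pins the characters of $F$ and $G$ to that of $\theta$. The only subtle point to flag is the division by $m+1$, the sole place where an assumption on $\ch K$ is hidden.
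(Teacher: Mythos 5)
Your proof is correct, but note that the paper itself contains no proof of this proposition: it is imported verbatim from Doyle--McMullen \cite[Theorem 5.2]{Doyle}, so there is no internal argument to compare against. Your decomposition checks out: with $\theta = f_1\,dx + f_2\,dy$ homogeneous of degree $m$ and $G = \frac{1}{m+1}(xf_1+yf_2)$, Euler's identity gives $dx$-coefficient $mf_1 - x\partial_x f_1 - y\partial_x f_2 = y\partial_y f_1 - y\partial_x f_2$ and $dy$-coefficient $x\partial_x f_2 - x\partial_y f_1$ for $(m+1)(\theta - dG)$, which is exactly $2(\partial_x f_2 - \partial_y f_1)\lambda$; this is indeed the Cartan formula $\mathcal{L}_E = d\iota_E + \iota_E d$ with $\mathcal{L}_E$ acting by $m+1$, and it is essentially the classical argument underlying the cited theorem. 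The character bookkeeping is also sound: since the relevant group is $\overline{\Gamma}\subset \SL_2$, both $dx\wedge dy$ and the Euler field $E$ are preserved, so $\lambda$ carries the trivial character, and equivariance of $d$ and $\iota_E$ pins the characters of $G = \frac{1}{m+1}\iota_E\theta$ and of $F$ (via $2F\lambda = \iota_E d\theta$, cancelling $\lambda$ coefficientwise in the free module) to that of $\theta$. Two points in your favor beyond the bare statement: your explicit construction automatically produces the degenerate cases $F=0$ and $G=0$ that the paper's remark flags as omitted by the degree restriction in the statement as quoted, and you correctly isolate the only hypothesis hidden in the argument, namely $\ch K \nmid m+1$, which is harmless in the characteristic-zero setting the paper works in.
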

    \begin{rem}
        Proposition \ref{prop_DM} is stated as by Doyle-McMullen \cite{Doyle}, but it needs a slight modification. The degree restriction to ensure homogeneity omits the case of Klein, which is $F=0$. The case $G=0$ is also possible and gives the identity map on projective space.
    \end{rem}
    For invariants $F$ and $G$, we have the corresponding map
    \begin{equation*}
        f = [xF/2 + G_y, yF/2 - G_x].
    \end{equation*}

    Thus, given any finite subgroup $\Gamma \subset \PGL_2$, we have 3 options for determining maps with $\Gamma \subset \Aut(f)$.
    \begin{enumerate}
        \item Compute $K[\bar{x}]^{\Gamma}$ and use Doyle-McMullen
        \item Use the equivariant Molien Series and the equivariant Reynolds operator
        \item Compute generators for the module of equivariants
    \end{enumerate}
    See Section \ref{sect_exmp_dim1} for examples on $\P^1$.

    Silverman modernized the classification of finite subgroups of $\PGL_2$ \cite{Silverman12}. So we know that any automorphism group must be conjugate to one of the following, where $\zeta_n$ represents a primitive $n\tth$ root of unity.
    \begin{enumerate}
        \item Cyclic group of order $n$:
        \begin{equation*}
            C_n = \left\langle \begin{pmatrix}
              \zeta_n & 0 \\ 0 & \zeta_n^{-1}
            \end{pmatrix} \right\rangle.
        \end{equation*}
\begin{code}
\begin{verbatim}
K<a> := CyclotomicField(3);
H := MatrixGroup< 2, K| [a,0, 0,1/a] >;
\end{verbatim}
\end{code}

        \item Dihedral Group of order $2n$:
            \begin{equation*}
                D_{2n} = \left\langle \begin{pmatrix}
                  \zeta_n & 0 \\ 0 & \zeta_n^{-1}
                \end{pmatrix},
                 \begin{pmatrix}
                  0 & 1 \\ 1 &0
                \end{pmatrix}
                \right\rangle.
            \end{equation*}
\begin{code}
\begin{verbatim}
K<a> := CyclotomicField(3);
R<x> := PolynomialRing(K);
f := x^2+1;
L<i> := ext<K|f>;
H := MatrixGroup< 2, L| [a,0, 0,1/a], [0,i, i,0] >;
\end{verbatim}
\end{code}

        \item Tetrahedral Group of order $12$:
            \begin{equation*}
                A_4 =
                \left\langle
                 \begin{pmatrix}
                  i & i \\ 1& -1
                \end{pmatrix}, \begin{pmatrix}
                  -1 & 0 \\ 0& 1
                \end{pmatrix},
 		         \begin{pmatrix}
                  0 & 1 \\ 1 &0
                \end{pmatrix} \right\rangle.
            \end{equation*}

\begin{code}
\begin{verbatim}
R<x> := PolynomialRing(Integers());
f := x^2+1;
K<i> := NumberField(f);
H := MatrixGroup< 2, K| [(-1+i)/2,(-1+i)/2, (1+i)/2,(-1-i)/2], [0,i, -i,0]  >;
\end{verbatim}
\end{code}
        \item Octahedral Group of order $24$:
            \begin{equation*}
                S_4 =
                \left\langle
                 \begin{pmatrix}
                  i & i \\ 1& -1
                \end{pmatrix}, \begin{pmatrix}
                  i & 0 \\ 0& 1
                \end{pmatrix},
 		        \begin{pmatrix}
                  0 & 1 \\ 1 &0
                \end{pmatrix} \right\rangle.
            \end{equation*}

\begin{code}
\begin{verbatim}
R<x> := PolynomialRing(Integers());
f := x^2+1;
K<i> := NumberField(f);
R<x> := PolynomialRing(K);
f := x^2-2;
L<a> := ext<K|f>;

H := MatrixGroup< 2, L| [(-1+i)/2,(-1+i)/2, (1+i)/2,(-1-i)/2], [(1+i)/a,0, 0,(1-i)/a]  >;


R<x> := PolynomialRing(Integers());
f := x^2+1;
K<i> := NumberField(f);
R<x> := PolynomialRing(K);
f := x^2+2;
L<a> := ext<K|f>;
H2 := MatrixGroup< 2, L| [i/a,i/a, 1/a,-1/a], [i,0, 0,1], [0,1, 1,0]  >;
\end{verbatim}
\end{code}

        \item Icosahedral Group of order $60$:
            \begin{equation*}
                A_5 =
                \left\langle
                 \begin{pmatrix}
                  \zeta_5+\zeta_5^{-1} & 1 \\ 1& -\zeta_5-\zeta_5^{-1}
                \end{pmatrix}, \begin{pmatrix}
                  \zeta_5 & 0 \\ 0& 1
                \end{pmatrix},
 		\begin{pmatrix}
                  0 & 1 \\ -1 &0
                \end{pmatrix}
 \right\rangle.
            \end{equation*}

\begin{code}
\begin{verbatim}
K<z5>:=CyclotomicField(5);
R<x> := PolynomialRing(K);
f:=x^2+1;
L<i>:=ext<K | f>;
R<x,y> := PolynomialRing(L,2);
X:=Matrix(R,2,1,[x,y]);
a:=2*z5^3 + 2*z5^2 + 1; //sqrt(5)

H := MatrixGroup< 2, L| [z5^3,0, 0,z5^2], [0,1, -1,0], [(z5^4-z5)/a, (z5^2-z5^3)/a, (z5^2-z5^3)/a, -(z5^4-z5)/a]  >;

t:=Matrix(R,2,[z5 + z5^4, 1, 1, -(z5+z5^4)]);
//b is sqrt of det of t
b := -z5^3 + z5^2;
//silverman
H := MatrixGroup< 2, L| [z5,0, 0,1], [0,1, -1,0], [(z5 + z5^4)/b, 1/b, 1/b, (-(z5+z5^4))/b]  >;
\end{verbatim}
\end{code}
    \end{enumerate}

    Silverman \cite{Silverman12} gave explicit descriptions for the infinite family of maps with a stabilizer group containing $C_n$ and $D_{2n}$.

\subsection{Examples on $\P^1$} \label{sect_exmp_dim1}

    \begin{exmp}[Octahedral Group] \label{exmp.octahedral}
        The subgroup of $\PGL_{2}$ of order $24$ has a representation in $\SL_2$:
        \begin{equation*}
            \Gamma = \left\langle \frac{1}{2}\begin{pmatrix}
              -1+i & -1+i \\ 1+i & -1-i
            \end{pmatrix}, \frac{1}{\sqrt{2}}\begin{pmatrix}
              1+i & 0 \\ 0& 1-i
            \end{pmatrix} \right\rangle.
        \end{equation*}
        We first compute some invariants with the Molien series. Using the trivial character, we obtain the series
        \begin{equation*}
            1 + t^8 + t^{12} + t^{16} + t^{18} + O(t^{20}).
        \end{equation*}
        With a search of degrees $8$ and $12$ with the Reynolds operator, we can find the two invariants
        \begin{align*}
            F&=x^8 + 14x^4y^4 + y^8\\
            G&=x^{10}y^2 - 2x^6y^6 + x^2y^{10} = (x^5y - xy^5)^2.
        \end{align*}
        With the other linear character we produce a (relative) Molien series
        \begin{equation*}
            t^6 + t^{12} + t^{14} + t^{18} + t^{20} + t^{22} + t^{24} + O(t^{26}).
        \end{equation*}
        We can again find the corresponding invariants:
        \begin{align*}
            R_1&=x^5y - xy^5  \\
            R_2&=x^{12} - 33x^8y^4 - 33x^4y^8 + y^{12}\\
            R_3&=x^{13}y + 13x^9y^5 - 13x^5y^9 - xy^{13} = FR_1.
        \end{align*}
        Using the observation of Klein, we can determine the maps with $T \subseteq \mathcal{A}_{\phi}$. Notice that it is possible to have different invariants result in the same morphism because we are working with projective morphisms and can have cancellation of common factors in the coordinates. This cancellation is how the degree $12$ invariant $G$ corresponds to a degree $5$ map.
        \begin{equation*}
        \begin{tabular}{|l|l|}
            \hline
            Generating Invariant & Morphism\\
            \hline
            $(R_1)$ or $(G)$ & $[-x^5 + 5xy^4,5x^4y - y^5]$\\
            \hline
            $(F)$ & $[-7x^4y^3 - y^7,x^7 + 7x^3y^4]$\\
            \hline
            $(F\cdot(xy))$ & $[x^9 + 70x^5y^4 + 9xy^8,-9x^8y - 70x^4y^5 - y^9]$\\
            \hline
            $(R_2)$ & $[11x^8y^3 + 22x^4y^7 - y^{11},x^{11} - 22x^7y^4 - 11x^3y^8]$\\
            \hline
            $(R_3)$ & $[-x^{13} - 65x^9y^4 + 117x^5y^8 + 13xy^{12}, 13x^{12}y + 117x^8y^5 - 65x^4y^9 - y^{13}]$\\
            \hline
            $(F^2 + R_3)\cdot (xy)$ & $[x^{14}y - 56x^{12}y^3 + 39x^{10}y^5 - 792x^8y^7 - 65x^6y^9 - 168x^4y^{11} - 7x^2y^{13} - 8y^{15}$,\\
            & $8x^{15} - 7x^{13}y^2 + 168x^{11}y^4 - 65x^9y^6 + 792x^7y^8 + 39x^5y^{10} + 56x^3y^{12} + xy^{14}]$\\
            \hline
            $(R_1R_2)$ & $[-x^{17} + 170x^{13}y^4 - 442x^5y^{12} + 17xy^{16}, 17x^{16}y - 442x^{12}y^5 + 170x^4y^{13} - y^{17}]$\\
            \hline
        \end{tabular}
        \end{equation*}

        Note that we could also have proceeded with the equivariant Molien series.
 	    With the trivial character, we have the series
        \begin{equation*}
            t + t^7 + t^9 + t^{11} + t^{13} + t^{15} + 2t^{17} + 2t^{19} + O(t^{21}).
        \end{equation*}
        With the other linear character we have the (relative) series
        \begin{equation*}
            t^5 + t^7 + t^{11} + 2t^{13} + t^{15} + t^{17} + 2t^{19} + 2t^{21} + 2t^{23} + O(t^{25}).
        \end{equation*}
        Notice that the exponents in the equivariant series do, in fact, correspond to the degrees of the maps with $\Gamma \subseteq \Aut(f)$.

\begin{code}
{\footnotesize \begin{verbatim}
//Magma code:

L<i> := CyclotomicField(8);
R<x> := PolynomialRing(L);
a:=i^3-i;

H := MatrixGroup< 2, L| [(-1+i^2)/2,(-1+i^2)/2, (1+i^2)/2,(-1-i^2)/2], [(1+i^2)/a,0, 0,(1-i^2)/a]  >;

//Compute the Molien series for linear characters.
//Only CT[1],CT[2] are linear.

CT:=CharacterTable(H);
xi:=CT[1];
S:=Inverse(NumberingMap(H));
R<t>:=PolynomialRing(L);
Mol:=0;
for i:=1 to Order(H) do
  Mol := Mol + (L!xi(S(i)))*Determinant(S(i))/(Determinant(DiagonalMatrix([1,1])- t*Matrix(R,S(i))));
end for;
Mol:=Mol/Order(H);
Mol;
PS<t>:=PowerSeriesRing(L);
PS!Mol;


//Apply the Reynold operator to find a particular invariant
R<x,y>:=PolynomialRing(L,2);
X:=Matrix(R,2,1,[x,y]);
M:=MonomialsOfDegree(R,8);
INV:={};

for i:=1 to #M do
  T:=0;
  F:=M[i];
  for i:= 1 to #H do
    t:=Matrix(R,S(i))*X;
    G:=(L!xi(S(i)))*Evaluate(F,[t[1,1],t[2,1]]);
    T:=T+G;
  end for;
  INV:=Include(INV, T);
end for;
INV;
//
//-----------------------------------------------------------------------------
//Compute the equivariant Molien Series
xi:=CT[1];
S:=Inverse(NumberingMap(H));
R<t>:=PolynomialRing(L);
Mol:=0;
for i:=1 to Order(H) do
  Mol := Mol + (L!xi(S(i)))*Trace(Matrix(R,S(i)^(-1)))/(Determinant(DiagonalMatrix([1,1])- t*Matrix(R,S(i))));
end for;
Mol:=Mol/Order(H);
Mol;
PS<t>:=PowerSeriesRing(L);
PS!Mol;

//Apply the equivariant Reynolds operator
R<x,y> := PolynomialRing(L,2);
X:=Matrix(R,2,1,[x,y]);
M:=MonomialsOfDegree(R,7);
Auts:={};


CT:=CharacterTable(H);
xi:=CT[1];

for xxi:=1 to #M do
  for yi:=1 to #M do
      T:=Matrix(2,1,[0,0]);
      F:=Matrix(R,2,1,[M[xxi],M[yi]]);
      for i:= 1 to #H do
        t:=Matrix(R,S(i))*X;
        G:=Matrix(R,2,1,[Evaluate(F[1,1],[t[1,1],t[2,1]]),Evaluate(F[2,1],[t[1,1],t[2,1]])]);
        C:=Matrix(R,S(i))^(-1)*G;
        T:=T+R!L!xi(S(i))*C;
      end for;
      g:=GCD([T[1,1],T[2,1]]);
     if g ne 0 then
        T[1,1]:=T[1,1]/g;
        T[2,1]:=T[2,1]/g;
      end if;
      Auts:=Include(Auts, T);
    end for;
  end for;
Auts;
\end{verbatim}}
\end{code}
    \end{exmp}

\begin{code}
\begin{verbatim}
Tetrahedral Group
The resulting fundamental invariants are the same as is Miller, Blichfeldt, Dickson.

L<w> := CyclotomicField(12);
i:=w^3;
H := MatrixGroup< 2, L| [(-1+i)/2,(-1+i)/2, (1+i)/2,(-1-i)/2], [0,i, -i,0]  >;
CT:=CharacterTable(H);  //first 6 have no zeros

xi:=CT[5];
S:=Inverse(NumberingMap(H));
R<t>:=PolynomialRing(L);
Mol:=0;
for i:=1 to Order(H) do
  Mol := Mol + (R!xi(S(i)))/(Determinant(DiagonalMatrix([1,1])- t*Matrix(R,S(i))));
end for;
Mol:=Mol/Order(H);
Mol;
PS<t>:=PowerSeriesRing(L);
PS!Mol;

CT[1];
1 + t^8 + 2*t^12 + t^16 + O(t^20)

x^8 + 14*x^4*y^4 + y^8
-x^12 + 33*x^8*y^4 + 33*x^4*y^8 - y^12
x^10*y^2 - 2*x^6*y^6 + x^2*y^10

CT[2]
t^6 + t^14 + 2*t^18 + t^22 + O(t^26)
**x^5*y-x*y^5
x^13*y + 13*x^9*y^5 - 13*x^5*y^9 - x*y^13

CT[3]
t^10 + t^14 + t^18 + 2*t^22 + 2*t^26 + O(t^30)
x^9*y + (-4*w^2 + 2)*x^7*y^3 + (4*w^2 - 2)*x^3*y^7 - x*y^9

CT[4]
t^4 + t^8 + t^12 + 2*t^16 + 2*t^20 + O(t^24)
**x^4 + (4*w^2 - 2)*x^2*y^2 + y^4
-x^8 + (8*w^2 - 4)*x^6*y^2 + 10*x^4*y^4 + (8*w^2 - 4)*x^2*y^6 - y^8

CT[5]
t^4 + t^8 + t^12 + 2*t^16 + 2*t^20 + O(t^24)
**x^4 + (-4*w^2 + 2)*x^2*y^2 + y^4,
-x^8 + (-8*w^2 + 4)*x^6*y^2 + 10*x^4*y^4 + (-8*w^2 + 4)*x^2*y^6 - y^8

CT[6]
t^10 + t^14 + t^18 + 2*t^22 + 2*t^26 + O(t^30)
x^9*y + (4*w^2 - 2)*x^7*y^3 + (-4*w^2 + 2)*x^3*y^7 - x*y^9

R<x,y>:=PolynomialRing(L,2);
X:=Matrix(R,2,1,[x,y]);
M:=MonomialsOfDegree(R,8);
INV:={};

for j:=1 to #M do
  T:=0;
  F:=M[j];
  for k:= 1 to #H do
    t:=Matrix(R,S(k))*X;
    G:=(R!xi(S(k)))*Evaluate(F,[t[1,1],t[2,1]]);
    T:=T+G;
  end for;
  INV:=Include(INV, T);
end for;
INV;


R<x,y> := PolynomialRing(L,2);
X:=Matrix(R,2,1,[x,y]);
f1:=x^8 + 14*x^4*y^4 + y^8;
f2:=-x^12 + 33*x^8*y^4 + 33*x^4*y^8 - y^12;
f3:=x^10*y^2 - 2*x^6*y^6 + x^2*y^10;

g1:=x^5*y-x*y^5;  //***
g2:=x^13*y + 13*x^9*y^5 - 13*x^5*y^9 - x*y^13;

h1:=x^9*y + (-4*w^2 + 2)*x^7*y^3 + (4*w^2 - 2)*x^3*y^7 - x*y^9;

j1:=x^4 + (4*w^2 - 2)*x^2*y^2 + y^4;//***
j2:=-x^8 + (8*w^2 - 4)*x^6*y^2 + 10*x^4*y^4 + (8*w^2 - 4)*x^2*y^6 - y^8;

k1:=x^4 + (-4*w^2 + 2)*x^2*y^2 + y^4;//***
k2:=-x^8 + (-8*w^2 + 4)*x^6*y^2 + 10*x^4*y^4 + (-8*w^2 + 4)*x^2*y^6 - y^8;

l1:=x^9*y + (4*w^2 - 2)*x^7*y^3 + (-4*w^2 + 2)*x^3*y^7 - x*y^9;

f:=j1^2*j2;

F:=Matrix(R,2,1,[-Derivative(f,2),Derivative(f,1)]);
g:=GCD([F[1,1],F[2,1]]);
g;
F[1,1]:=F[1,1]/g;
F[2,1]:=F[2,1]/g;
F;
t:=Matrix(R,H.1)*X;
G:=Matrix(R,2,1,[Evaluate(F[1,1],[t[1,1],t[2,1]]),Evaluate(F[2,1],[t[1,1],t[2,1]])]);
C:=Matrix(R,H.1)^(-1)*G;
C;
t:=Matrix(R,H.2)*X;
G:=Matrix(R,2,1,[Evaluate(F[1,1],[t[1,1],t[2,1]]),Evaluate(F[2,1],[t[1,1],t[2,1]])]);
C:=Matrix(R,H.2)^(-1)*G;
C;

f1
[-7*x^4*y^3 - y^7]
[ x^7 + 7*x^3*y^4]

f2
[11*x^8*y^3 + 22*x^4*y^7 - y^11]
[x^11 - 22*x^7*y^4 - 11*x^3*y^8]

f3
[ x^5 - 5*x*y^4]
[-5*x^4*y + y^5]

g1
[-x^5 + 5*x*y^4]
[ 5*x^4*y - y^5]

g2
[-x^13 - 65*x^9*y^4 + 117*x^5*y^8 + 13*x*y^12]
[ 13*x^12*y + 117*x^8*y^5 - 65*x^4*y^9 - y^13]

h1
[-x^9 + (12*w^2 - 6)*x^7*y^2 + (-28*w^2 + 14)*x^3*y^6 + 9*x*y^8]
[ 9*x^8*y + (-28*w^2 + 14)*x^6*y^3 + (12*w^2 - 6)*x^2*y^7 - y^9]

j1
[(-2*w^2 + 1)*x^2*y - y^3]
[ x^3 + (2*w^2 - 1)*x*y^2]

j2
[(-2*w^2 + 1)*x^2*y + y^3]
[-x^3 + (2*w^2 - 1)*x*y^2]

k1
[ (2*w^2 - 1)*x^2*y - y^3]
[x^3 + (-2*w^2 + 1)*x*y^2]

k2
[  (2*w^2 - 1)*x^2*y + y^3]
[-x^3 + (-2*w^2 + 1)*x*y^2]

l1
[-x^9 + (-12*w^2 + 6)*x^7*y^2 + (28*w^2 - 14)*x^3*y^6 + 9*x*y^8]
[ 9*x^8*y + (28*w^2 - 14)*x^6*y^3 + (-12*w^2 + 6)*x^2*y^7 - y^9]


f1=j1*k1
g2=g1*f1
g2=k1*l1
\end{verbatim}
\end{code}

    \begin{exmp}\label{exmp.equi.oct}
        We again start with the octahedral group but now use the module of equivariants to determine maps.

        We compute the equivariant Molien Series
        \begin{equation*}
            \frac{(t^9 - t^7 + t^5 - t^3 + t)}{(t^{12} - t^{10} + t^8 - 2t^6 + t^4 - t^2 + 1)} = t + t^7 + t^9 + t^{11} + t^{13} + t^{15} + 2t^{17} + 2t^{19} + O(t^{21}).
        \end{equation*}
        We see there are four fundamental equivariants and get their degrees as $1$, $7$, $11$, and $17$:
        \begin{equation*}
            \frac{(t^9 - t^7 + t^5 - t^3 + t)}{(t^{12} - t^{10} + t^8 - 2t^6 + t^4 - t^2 + 1)} \cdot (1-t^8)(1-t^{12}) = t + t^7 + t^{11} + t^{17}.
        \end{equation*}
        The degree $7$ and $5$ equivariants come directly from (\ref{eq_Klein}) and the primary invariants
        \begin{align*}
            p_8=&x^8 + 14x^4y^4 + y^8\\
            p_{12}=&x^{10}y^2 - 2x^6y^6 + x^2y^{10} = (x^5y - xy^5)^2,
        \end{align*}
        where the repeated factor in the degree $12$ invariant causes the resulting map in lowest form to be degree $5$, not $11$. We use the Reynold's operator to find the degree $17$ equivariant. The four fundamental equivariants are
        \begin{align*}
            f_1(x,y)=&(x:y)\\
            f_7(x,y)=&(7x^4y^3 + y^7 : -x^7 - 7x^3y^4)\\
            f_5(x,y)=&(-x^5 + 5xy^4 : 5x^4y - y^5)\\
            f_{17}(x,y)=&(x^{17} - 60x^{13}y^4 + 110x^9y^8 + 212x^5y^{12} - 7xy^{16}\\ &:-7x^{16}y + 212x^{12}y^5 + 110x^8y^9 - 60x^4y^{13} + y^{17}).
        \end{align*}
        Now we can find a new element in the module by combining equivariants and invariants. We can use either absolute or relative invariants (of the same character) for projective maps since the common factors among the coordinates can be removed. In fact, the $\chi$-relative invariants are a module over $K[p_1,\ldots,p_N]$ \cite{Kemper}. Now we can find a new element in the module as
        \begin{align*}
            f_{17} + 2p_{12}f_5 &=\\
            &(x^{17} + 2x^{15}y^2 - 60x^{13}y^4 - 14x^{11}y^6 + 110x^9y^8 +22x^7y^{10} + 212x^5y^{12} - 10x^3y^{14} - 7xy^{16}\\
            &: -7x^{16}y - 10x^{14}y^3 + 212x^{12}y^5 + 22x^{10}y^7 + 110x^8y^9 - 14x^6y^{11} -60x^4y^{13} + 2x^2y^{15} + y^{17})
        \end{align*}

\begin{code}
{\footnotesize
\begin{verbatim}
//Magma code:
L<i> := CyclotomicField(8);
R<x> := PolynomialRing(L);
a:=i^3-i;

H := MatrixGroup< 2, L| [(-1+i^2)/2,(-1+i^2)/2, (1+i^2)/2,(-1-i^2)/2], [(1+i^2)/a,0, 0,(1-i^2)/a]  >;

//Compute the Molien series for linear characters.
//Only CT[1],CT[2] are linear.

CT:=CharacterTable(H);

//Compute the equivariant Molien Series
xi:=CT[1];
S:=Inverse(NumberingMap(H));
R<t>:=PolynomialRing(L);
EqMol:=0;
for i:=1 to Order(H) do
  EqMol := EqMol + (L!xi(S(i)))*Trace(Matrix(R,S(i)^(-1)))/(Determinant(DiagonalMatrix([1,1])- t*Matrix(R,S(i))));
end for;
EqMol:=EqMol/Order(H);
EqMol;
(EqMol)*((1-t^8)*(1-t^12));



//Apply the equivariant Reynolds operator
R<x,y> := PolynomialRing(L,2);
X:=Matrix(R,2,1,[x,y]);
M:=MonomialsOfDegree(R,11);
Auts:={};


xi:=CT[1];
S:=Inverse(NumberingMap(H));
for xxi:=1 to #M do
  for yi:=1 to #M do
      T:=Matrix(2,1,[0,0]);
      F:=Matrix(R,2,1,[M[xxi],M[yi]]);
      for i:= 1 to #H do
        t:=Matrix(R,S(i))*X;
        G:=Matrix(R,2,1,[Evaluate(F[1,1],[t[1,1],t[2,1]]),Evaluate(F[2,1],[t[1,1],t[2,1]])]);
        C:=Matrix(R,S(i))^(-1)*G;
        T:=T+R!L!xi(S(i))*C;
      end for;
      g:=GCD([T[1,1],T[2,1]]);
      print g;
      if g ne 0 then
        T[1,1]:=T[1,1]/g;
        T[2,1]:=T[2,1]/g;
      end if;
      Auts:=Include(Auts, T);
    end for;
  end for;
Auts;

\end{verbatim}
}
\end{code}

\begin{code}
{\footnotesize
\begin{verbatim}
//Magma code:
K<a> := CyclotomicField(7);
R<x> := PolynomialRing(K);
f := x^2+1;
L<i> := ext<K|f>;
H := MatrixGroup< 2, L| [a,0, 0,1/a], [0,1, 1,0] >;

PrimaryInvariants(InvariantRing(H));

//Compute the Molien series for linear characters.
//Only CT[1],CT[2] are linear.

CT:=CharacterTable(H);

//Compute the equivariant Molien Series
xi:=CT[1];
S:=Inverse(NumberingMap(H));
R<t>:=PolynomialRing(L);
EqMol:=0;
for i:=1 to Order(H) do
  EqMol := EqMol + (L!xi(S(i)))*Trace(Matrix(R,S(i)^(-1)))/(Determinant(DiagonalMatrix([1,1])- t*Matrix(R,S(i))));
end for;
EqMol:=EqMol/Order(H);
EqMol;
(EqMol)*((1-t^2)*(1-t^7));



//Apply the equivariant Reynolds operator
R<x,y> := PolynomialRing(L,2);
X:=Matrix(R,2,1,[x,y]);
M:=MonomialsOfDegree(R,11);
Auts:={};


xi:=CT[1];
S:=Inverse(NumberingMap(H));
for xxi:=1 to #M do
  for yi:=1 to #M do
      T:=Matrix(2,1,[0,0]);
      F:=Matrix(R,2,1,[M[xxi],M[yi]]);
      for i:= 1 to #H do
        t:=Matrix(R,S(i))*X;
        G:=Matrix(R,2,1,[Evaluate(F[1,1],[t[1,1],t[2,1]]),Evaluate(F[2,1],[t[1,1],t[2,1]])]);
        C:=Matrix(R,S(i))^(-1)*G;
        T:=T+R!L!xi(S(i))*C;
      end for;
      g:=GCD([T[1,1],T[2,1]]);
      print g;
      if g ne 0 then
        T[1,1]:=T[1,1]/g;
        T[2,1]:=T[2,1]/g;
      end if;
      Auts:=Include(Auts, T);
    end for;
  end for;
Auts;

\end{verbatim}
\end{code}

\begin{code}
\begin{verbatim}
//Magma code:

L<i> := CyclotomicField(8);
R<x> := PolynomialRing(L);
a:=i^3-i;

H := MatrixGroup< 2, L| [(-1+i^2)/2,(-1+i^2)/2, (1+i^2)/2,(-1-i^2)/2], [(1+i^2)/a,0, 0,(1-i^2)/a]  >;
S:=Inverse(NumberingMap(H));
CT:=CharacterTable(H);

//MolienSeries
xi:=CT[1];
R<t>:=PolynomialRing(L);
Mol:=0;
for i:=1 to Order(H) do
  Mol := Mol + (L!xi(S(i)))/(Determinant(DiagonalMatrix([1,1])- t*Matrix(R,S(i))));
end for;
Mol:=Mol/Order(H);
Mol;
PS<t>:=PowerSeriesRing(L);
PS!Mol;

//Apply the Reynold operator to find a particular invariant
R<x,y>:=PolynomialRing(L,2);
X:=Matrix(R,2,1,[x,y]);
M:=MonomialsOfDegree(R,12);
INV:={};

CT:=CharacterTable(H);
xi := CT[2];

for i:=1 to #M do
  T:=0;
  F:=M[i];
  for i:= 1 to #H do
    t:=Matrix(R,S(i))*X;
    G:=(L!xi(S(i)))*Evaluate(F,[t[1,1],t[2,1]]);
    T:=T+G;
  end for;
  INV:=Include(INV, T);
end for;
INV;
\end{verbatim}
\end{code}
    \end{exmp}

\subsection{Higher Dimensions}
    From Doyle-McMullen \cite{Doyle} there is a correspondence between invariants and equivariants for endomorphism of $\P^1$. From Crass \cite{Crass} we have a 1-1 correspondence between invariant 2-forms and maps with automorphisms for unitary representations. Note that Maschke's Theorem says that we can always reduce to considering unitary representations.
    We prove the more general connection between $n$-forms and equivariants. This was stated but not proven by Crass \cite{Crass2}.
     \begin{thm}\label{thm_form_aut}
        Define
        \begin{equation*}
            dX^I = (-1)^{\sigma_I}dx_{i_1} \wedge \cdots \wedge dx_{i_n},
        \end{equation*}
        where $I$ is the ordered set
        \begin{equation*}
            \{i_1,\ldots,i_n\} \quad i_1 < \cdots < i_n
        \end{equation*}
        and, for $\hat{i}$ the index not in $I$, $\sigma_I$ is the permutation
        \begin{equation*}
            \begin{pmatrix}
              0 &1&\cdots&n\\
              \hat{i} &i_1&\ldots & i_n
            \end{pmatrix}.
        \end{equation*}
        $\Gamma$ invariant $n$-forms
        \begin{equation*}
            \phi = \sum_{\hat{i}=0}^n f_{\hat{i}}dX^I
        \end{equation*}
        are in 1-1 correspondence with maps
        \begin{equation*}
            f = (f_0,\dots, f_n)
        \end{equation*}
        with $\Gamma \subset \Aut(f)$.
     \end{thm}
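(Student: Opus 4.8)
The plan is to realize the correspondence concretely as contraction of a vector field against the standard volume form, and then to check that the group action commutes with this contraction. Write $\Omega = dx_0 \wedge \cdots \wedge dx_n$ for the standard top form on $\A^{n+1}$, and regard the tuple $f = (f_0,\ldots,f_n)$ as the polynomial vector field $V = \sum_{i=0}^n f_i\, \partial/\partial x_i$. The first step is a formal identity: the interior product satisfies
\[
\iota_{\partial/\partial x_{\hat i}}\,\Omega = (-1)^{\hat i}\, dx_0 \wedge \cdots \wedge \widehat{dx_{\hat i}} \wedge \cdots \wedge dx_n,
\]
and one checks that the sign $(-1)^{\sigma_I}$ in the statement equals $(-1)^{\hat i}$, since sorting $(\hat i, i_1,\ldots,i_n)$ into increasing order requires moving $\hat i$ past exactly the $\hat i$ indices $0,\ldots,\hat i-1$. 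Hence $dX^I = \iota_{\partial/\partial x_{\hat i}}\Omega$, and therefore $\phi = \iota_V \Omega$. Because contraction with the nowhere-vanishing form $\Omega$ is a $K[\overline x]$-module isomorphism from vector fields to $n$-forms, the assignment $f \mapsto \phi$ is already a bijection at the level of underlying objects; it remains only to match the two invariance conditions.

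The second step computes the action of $\gamma \in \overline\Gamma \subset \SL_{n+1}$ on each side. Using naturality of the interior product under a linear change of coordinates, $\gamma \cdot (\iota_V \Omega) = \iota_{\gamma \cdot V}(\gamma \cdot \Omega)$, together with $\gamma \cdot \Omega = \det(\gamma)\,\Omega = \Omega$ (this is where lifting to $\SL_{n+1}$ rather than $\GL_{n+1}$ is used), gives
\[
\gamma \cdot \phi = \iota_{\gamma \cdot V}\,\Omega.
\]
Since contraction is injective, $\phi$ is fixed by $\gamma$ (relatively, up to the character $\chi$) if and only if the vector field $V$, equivalently the tuple $f$, is fixed by $\gamma$ under the same relative action. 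Unwinding the convention $(\gamma g)(v) = g(\gamma^{-1}v)$ used throughout, invariance of $V$ becomes the equivariance relation $\gamma \circ f = \chi(\gamma)\, f \circ \gamma$, which in $\PGL_{n+1}$ is exactly the statement $f^{\gamma} = f$, i.e. $\gamma \in \Aut(f)$. Ranging over all $\gamma$ then identifies $\Gamma$-invariant $n$-forms with maps satisfying $\Gamma \subseteq \Aut(f)$.

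The main obstacle, and the place where care is genuinely required, is the bookkeeping in the second step: reconciling the pullback convention $(\gamma g)(v) = g(\gamma^{-1}v)$ for the action on forms with the conjugation convention $f^{\gamma} = \gamma \circ f \circ \gamma^{-1}$ defining $\Aut(f)$, while simultaneously tracking the character $\chi$. The subtlety is that an \emph{invariant} $n$-form need only be preserved up to $\chi(\gamma)$, and the projective condition $\gamma \in \Aut(f)$ likewise only requires $f^{\gamma}$ to agree with $f$ up to a nonzero scalar; one must verify that these two scalar ambiguities correspond under contraction, so that relative invariance of $\phi$ matches the $\PGL$-stabilizer condition rather than the stricter $\SL$-fixed condition. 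Once the transpose and inverse placements and this scalar matching are pinned down, the equivalence in the second step is immediate and the theorem follows.
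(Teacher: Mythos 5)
Your proof is correct, but it realizes the correspondence by a different duality than the paper does. The paper's proof identifies the $n$-form $\phi=\sum_{\hat i} f_{\hat i}\,dX^I$ with the $1$-form $\sum_i f_i\,dx_i$ by invoking Hodge duality, and then checks directly that the $\Gamma$-action on the $n$-form matches the conjugation action on $f$; you instead write $\phi=\iota_V\Omega$ for the vector field $V=\sum_i f_i\,\partial/\partial x_i$ and the volume form $\Omega=dx_0\wedge\cdots\wedge dx_n$, after verifying the sign identity $(-1)^{\sigma_I}=(-1)^{\hat i}$, and then use naturality of the interior product together with $\gamma\cdot\Omega=\det(\gamma)\,\Omega=\Omega$ for $\gamma$ in the $\SL_{n+1}$ lift $\overline\Gamma$. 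The two identifications agree up to the standard metric, so the correspondence is the same, but your route has a genuine advantage in rigor: contraction against $\Omega$ is equivariant for \emph{all} of $\SL_{n+1}$ (this is exactly where $\det=1$ enters), whereas the Hodge star commutes with the group action only for orthogonal or unitary representations, so the paper's appeal to ``Hodge duality'' silently depends on the reduction to unitary representations via Maschke's theorem that the authors mention just before the theorem but do not use inside the proof. You also make explicit the scalar bookkeeping --- that relative invariance of $\phi$ up to the character $\chi$ corresponds, under the injective contraction map, to $f^{\gamma}=\chi(\gamma)f$ on lifts and hence to the projective condition $f^{\gamma}=f$ in $\PGL_{n+1}$ --- a point the paper's proof passes over implicitly; this is precisely what makes the statement a correspondence with $\Gamma\subseteq\Aut(f)$ rather than with strict $\SL$-equivariants, and your treatment of it is sound.
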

     \begin{proof}
        Hodge duality tells us that
        \begin{equation*}
            \sum_{i=1}^n f_i dx_i = f \cdot dX^I.
        \end{equation*}
        If $f^{\gamma} = \chi(\gamma)f$, then
        \begin{equation*}
            \sum_{i=1}^n (f^{\gamma})_i dx_i = \chi(\gamma)f \cdot dX^I.
        \end{equation*}
        The left-hand side is the same as $\gamma$ acting on
        \begin{equation*}
            \sum_{i=1}^n f_i(\gamma^{-1}\bar{x}) dx_i.
        \end{equation*}
        Recall that the action of $\Gamma$ on the exterior algebra is given by
        \begin{equation*}
            \gamma \phi(\bar{x}) = \phi(\gamma^{-1}\bar{x}).
        \end{equation*}
        So we have
        \begin{equation*}
            \gamma f \cdot dX^I = \chi(\gamma)f \cdot dX^I
        \end{equation*}
        and the $n$-form is $\Gamma$-invariant.

        Conversely, assume that the $n$-form $f \cdot dX^I$ is $\Gamma$-invariant, so that
        \begin{equation*}
            \gamma f \cdot dX^I = \chi(\gamma)f \cdot dX^I.
        \end{equation*}
        Again by Hodge duality, this is the same as saying that $\gamma$ acting on $\sum_{i=1}^n f_i(\gamma^{-1}\bar{x}) dx_i$ is the same as $\chi(\gamma)\sum_{i=1}^n fdx_i$. Hence, the map $f$ is $\Gamma$-equivariant.
     \end{proof}

    \begin{cor}
        Let $\Gamma$ be a finite subgroup of $\PGL_{N+1}$. Then there is a rational map $f:\P^N \to \P^N$ such that $\Gamma \subseteq \Aut(f)$.
    \end{cor}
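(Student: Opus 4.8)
The plan is to derive this corollary as an immediate consequence of Theorem~\ref{thm_form_aut} together with the observation recorded just before Section~3.1. The theorem gives a bijection between $\Gamma$-invariant $n$-forms $\phi = \sum_{\hat i} f_{\hat i}\, dX^I$ and maps $f = (f_0,\dots,f_n)$ satisfying $\Gamma \subseteq \Aut(f)$. So to produce a map with $\Gamma$ in its automorphism group, it suffices to exhibit a single nonzero $\Gamma$-invariant $(N+1)$-form, and then the correspondence hands us the desired $f$ for free.

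\medskip

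First I would lift $\Gamma \subset \PGL_{N+1}$ to a finite subgroup $\overline\Gamma \subset \GL_{N+1}$ (equivalently, as in the setup, to its preimage in $\SL_{N+1}$), so that $\overline\Gamma$ acts linearly on $V = \A^{N+1}$ and hence on the polynomial ring $K[\bar x] = K[x_0,\dots,x_N]$. By the Noether--Hilbert finiteness and Cohen--Macaulay structure recalled in Section~\ref{sect.invariant}, the invariant ring $K[\bar x]^{\Gamma}$ contains a system of $N+1$ algebraically independent homogeneous primary invariants $p_0,\dots,p_N$. This is exactly the content of the note preceding Section~3.1: since there are $N+1$ primary invariants, one can always form the $(N+1)$-form
\begin{equation*}
    \phi = dp_0 \wedge \cdots \wedge dp_N.
\end{equation*}

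\medskip

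Next I would check that this form is both $\Gamma$-invariant and nonzero. Invariance is automatic: each $p_i \in K[\bar x]^{\Gamma}$ and the exterior derivative $d$ commutes with the action of $\Gamma$, so each $dp_i$ is an invariant $1$-form and the wedge of invariant forms is again invariant (both facts are stated in Section~\ref{sect.invariant}). Nonvanishing follows from the algebraic independence of the $p_i$: the Jacobian $\det(\partial p_i/\partial x_j)$ is not identically zero, which is precisely the coefficient expressing $\phi$ as a (relative) invariant multiple of $dx_0 \wedge \cdots \wedge dx_N$. Writing $\phi = \sum_{\hat i} f_{\hat i}\, dX^I$ in the basis of the theorem then yields a well-defined tuple $f = (f_0,\dots,f_N)$, not all zero, and Theorem~\ref{thm_form_aut} gives $\Gamma \subseteq \Aut(f)$.

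\medskip

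The only subtlety—and the point I expect to require the most care—is the gap between a \emph{rational} map and a \emph{morphism}: the tuple $(f_0,\dots,f_N)$ could have a common zero locus, so $f$ is a priori only a rational self-map of $\P^N$, which is exactly why the corollary is phrased with ``rational map'' rather than ``morphism.'' After clearing the greatest common factor of the coordinates we obtain an honest rational map with $\Gamma \subseteq \Aut(f)$, which suffices for the statement as written. Upgrading this to produce genuine morphisms (and doing so in infinitely many ways) is the strictly harder task handled separately in Theorem~\ref{thm_exist_aut}; for the present corollary the bare existence furnished by the invariant-form construction is enough.
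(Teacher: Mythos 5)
Your proposal matches the paper's own proof exactly: both construct the $\Gamma$-invariant $(N+1)$-form $dp_0 \wedge \cdots \wedge dp_N$ from the $N+1$ algebraically independent primary invariants and apply Theorem~\ref{thm_form_aut} to obtain $f$ with $\Gamma \subseteq \Aut(f)$. Your additional checks (nonvanishing via the Jacobian, and flagging that the resulting $f$ is a priori only a rational map, with morphisms deferred to Theorem~\ref{thm_exist_aut}) are correct refinements of details the paper leaves implicit or defers to its remark following the corollary.
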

    \begin{proof}
        We know that there are at least $N+1$ algebraically independent (primary) invariants for $\Gamma$, $p_0,\ldots,p_{N}$. So the $(N+1)$-form
        \begin{equation*}
            dp_0 \wedge \cdots \wedge dp_{N}
        \end{equation*}
        is $\Gamma$-invariant. Applying the previous theorem, this $(N+1)$-form corresponds to an $f$ with $\Gamma \subseteq \Aut(f)$.
    \end{proof}
    \begin{rem}
        Note that the previous corollary did not rule out the possibility that $f$ is the identity map as a projective map. In particular, the situation where $f$ is some multiple of the identity map, $f = (Fx_0,\ldots, Fx_N)$ for some homogeneous polynomial $F$ is still possible. While we could probably prove directly that this is not possible using the linear independence of the invariants $p_i$, it is easier to use the module of equivariants to prove the stronger statement of Theorem \ref{thm_exist_aut}.
    \end{rem}

    Using the module of equivariants, we can prove something stronger although somewhat less constructive. We say less constructive, since while there is a Reynold's operator for equivariants, it is not practical as the dimension increases. It is somewhat easier to find the invariants of $\Gamma$ and then construct the $(N+1)$-form as in the previous corollary.

    \begin{thm} \label{thm_exist_aut}
        Let $\Gamma$ be a finite subgroup of $\PGL_{N+1}$. Then there are infinitely many morphisms $f:\P^N \to \P^N$ such that $\Gamma \subseteq \Aut(f)$.
    \end{thm}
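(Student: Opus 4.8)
The plan is to use the module of equivariants to produce infinitely many distinct morphisms, while ruling out the degenerate possibility (flagged in the preceding remark) that the resulting maps are multiples of the identity. By Theorem \ref{thm_form_aut}, a morphism $f$ with $\Gamma \subseteq \Aut(f)$ is the same thing as a $\Gamma$-equivariant polynomial map $\A^{N+1} \to \A^{N+1}$, i.e.\ an element of $(K[V] \otimes V)^{\Gamma}$ for the representation $\rho_V$ of $\overline{\Gamma} \subset \SL_{N+1}$. So the key object is the module of equivariants for $W = V$, which by Proposition \ref{prop_fund_equi} is a finitely generated, nonzero Cohen--Macaulay module over the ring of invariants $K[V]^{\Gamma}$.

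The first step is to observe that this equivariant module is nontrivial and contains elements giving genuine (non-identity) projective maps. Concretely, I would exhibit one explicit nonconstant equivariant: the $(N+1)$-form $dp_0 \wedge \cdots \wedge dp_N$ of the Corollary already produces an $f$ that is equivariant, and more directly, one can take the fundamental equivariant whose coordinate functions do \emph{not} all share a common polynomial factor of the form $F \cdot (x_0, \ldots, x_N)$. The point is that if $f = (f_0, \ldots, f_N)$ is a nonzero equivariant that is \emph{not} a scalar multiple of the identity map $(x_0, \ldots, x_N)$, then its image in $\PGL_{N+1}$-dynamics is a genuine nonidentity endomorphism. The second step, the heart of producing infinitely many, is a multiplication argument: since $(K[V] \otimes V)^{\Gamma}$ is a module over $K[V]^{\Gamma}$ and the invariant ring contains invariants of arbitrarily large degree (for instance arbitrary powers $p_0^k$ of a primary invariant), multiplying a fixed nondegenerate equivariant $f$ by invariants $p_0^k$ for $k = 1, 2, 3, \ldots$ yields equivariants $p_0^k \cdot f$ of strictly increasing degree. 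After removing common factors to pass to the reduced projective map, I must check these give infinitely many \emph{distinct} maps.

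The main obstacle is precisely this last distinctness check: multiplying by an invariant and then clearing common factors among the coordinates could collapse the degree back down (this is exactly the cancellation phenomenon illustrated in Example \ref{exmp.octahedral}, where a degree $12$ invariant yielded a degree $5$ map). To control this I would argue as follows. Choose the base equivariant $f = (f_0, \ldots, f_N)$ so that $\gcd(f_0, \ldots, f_N) = 1$; then for a primary invariant $p_0$ the coordinates of $p_0^k f$ have greatest common divisor dividing $p_0^k$, so after reduction the map still has degree at least $\deg(f)$ and, more usefully, the reduced degrees form an infinite increasing sequence along a suitable subsequence of exponents $k$, since only finitely many cancellations of any fixed invariant factor can occur. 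Distinct reduced degrees force distinct maps, giving infinitely many $f$ with $\Gamma \subseteq \Aut(f)$.

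Alternatively, and perhaps more cleanly, I would sidestep the cancellation issue by working additively rather than multiplicatively: fix two equivariants $g, h$ with $\Gamma \subseteq \Aut$, at least one nondegenerate, and consider the family $g + p_0^k h$ for varying invariants, or even the family of invariant $(N+1)$-forms $d(p_0 + c\, p_1^{\,r}) \wedge dp_1 \wedge \cdots \wedge dp_N$ parametrized by a scalar $c \in K$; distinct values of $c$ typically produce nonconjugate maps, and one need only verify that infinitely many of these are honest morphisms and pairwise distinct in $M_d^N$. Either route reduces the theorem to the finite generation and nontriviality of the equivariant module (Proposition \ref{prop_fund_equi}) together with an elementary degree/gcd bookkeeping argument, which is the step requiring the most care.
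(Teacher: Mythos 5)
Your overall strategy (work in the module of equivariants, find a non-identity equivariant, then vary it in a family) is the paper's strategy, but both of your concrete mechanisms for producing \emph{infinitely many} maps fail. The multiplicative route is vacuous: if $\gcd(f_0,\ldots,f_N)=1$, then the coordinates of $p_0^k f$ have greatest common divisor exactly $p_0^k$, so after clearing it you recover the projective map $[f_0:\cdots:f_N]$ itself, for \emph{every} $k$. Multiplying all coordinates of a projective morphism by a common invariant never changes the morphism; the cancellation is total, not ``finitely many cancellations of any fixed invariant factor,'' so your sequence $p_0^k f$ consists of a single map and the claimed increasing sequence of reduced degrees does not exist. (The cancellation in Example \ref{exmp.octahedral} is a different phenomenon: an invariant with a repeated factor producing a lower-degree Klein map, not a route to new maps.) Your alternative pencil of $(N+1)$-forms is degenerate for a different reason: since $d(p_1^r)=rp_1^{r-1}dp_1$ and $dp_1\wedge dp_1=0$, one has $d(p_0+c\,p_1^r)\wedge dp_1\wedge\cdots\wedge dp_N = dp_0\wedge dp_1\wedge\cdots\wedge dp_N$ identically, so the family is constant in $c$; and $g+p_0^k h$ is homogeneous for at most one value of $k$ once $g,h$ are fixed, so it is not a family at all.

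Two further steps you assert are exactly what the paper has to prove. First, the existence of an equivariant that is not projectively the identity: your appeal to $dp_0\wedge\cdots\wedge dp_N$ does not supply it (the paper's remark after the corollary notes precisely that this map may be a multiple of $(x_0,\ldots,x_N)$), and ``take the fundamental equivariant whose coordinates do not share the factor $F\cdot(x_0,\ldots,x_N)$'' presupposes such a generator exists. The paper gets this from the count in Proposition \ref{prop_fund_equi}: for non-pseudoreflection groups the number of fundamental equivariants is at least $2(N+1)\geq 4$, while equivariants of the form $F\cdot(x_0,\ldots,x_N)$ with $F$ invariant cannot account for them all. Second, your ``distinct values of $c$ typically produce nonconjugate maps, and one need only verify that infinitely many are honest morphisms'' is the heart of the matter, and the paper closes it with a specific mechanism: form $F=\sum t_iG_i$ with $t_i\in K[p_1,\ldots,p_N]$ and all $\deg(t_iG_i)$ equal, identify the coefficients with points of an affine space $\A^{\tau}$ with $\tau\geq 1$, and use that $F$ is a morphism if and only if its Macaulay resultant is nonzero --- a polynomial, hence closed, condition --- so a (nonempty) open set of parameters yields infinitely many distinct morphisms with $\Gamma\subseteq\Aut(f)$; compare the octahedral family $g_t=f_{17}+t\,p_{12}f_5$, where $\Res(g_t)$ vanishes only at $t=1$ and $t=4/3$. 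Without the counting argument and the resultant/openness argument, your proposal does not close, and with them it becomes the paper's proof.
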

    \begin{proof}
        We can compute the number of fundamental equivariants $m$ from Proposition \ref{prop_fund_equi}. Since maps on $\P^N$ have coordinates in $K[x_0,\ldots, X_{N}]$, we have $m \geq N+1 \geq 2$; so for non-pseudoreflection groups \cite{Shephard-Todd}, which are all of our groups, the number of fundamental equivariants is at least $4$. In particular, there is at least one nontrivial equivariant $f$. Assume that $f$ is the identity map on projective space, i.e., $f = (Fx_0,\ldots,Fx_N)$ for some homogeneous polynomial $F$. This is an element in the module of equivariants so that $F$ must be an invariant of $\Gamma$. However, this equivariant is not independent of the trivial equivariant, contradicting the fact that $m \geq 4$.

        We now know there is at least one equivariant that is not the trivial map, call it $G = [G_0,\ldots,G_N]$. Let $p_1,\ldots, p_N$ be primary invariants for $\Gamma$. Since the equivariants are a module over the ring $K[p_1,\ldots,p_N]$, we can form new equivariants as
        \begin{equation*}
            F = \sum t_i G_i,
        \end{equation*}
        where $t_i \in K[p_1,\ldots,p_N]$, the $G_i$ are equivariants, and the degrees $\deg(t_iG_i)$ are all the same. Each such map can be thought of as a point in some affine space $\A^{\tau}$. The identification is between the coefficients of the $p_i$ in each $t_i$ with the affine coordinates. We have $\tau \geq 1$ since we can find at least one pair of equivariants $G_0,G_1$ whose degrees are such that we can create a homogeneous map $t_0G_0 + t_1G_1$. This is clear by looking at the form of the degrees of the fundamental equivariants from Proposition \ref{prop_fund_equi}.

        Recall that the map $F$ is a morphism if and only if the Macaulay resultant is non-zero and that the Macaulay resultant is a polynomial in the coefficients of the map (i.e., a closed condition). Thus, an open set in $\A^{\tau}$ corresponds to new equivariants.
    \end{proof}

    \begin{exmp}
        We consider the octahedral group as in Example \ref{exmp.equi.oct}. Using the fundamental equivariants
        \begin{align*}
            f_5(x,y)=&(-x^5 + 5xy^4 : 5x^4y - y^5)\\
            f_{17}(x,y)=&(x^{17} - 60x^{13}y^4 + 110x^9y^8 + 212x^5y^{12} - 7xy^{16}\\ &:-7x^{16}y + 212x^{12}y^5 + 110x^8y^9 - 60x^4y^{13} + y^{17})
        \end{align*}
        and the invariants
        \begin{align*}
            p_8=&x^8 + 14x^4y^4 + y^8\\
            p_{12}=&x^{10}y^2 - 2x^6y^6 + x^2y^{10} = (x^5y - xy^5)^2
        \end{align*}
        we construct a new equivariant:
        \begin{align*}
            f_{17} + 2p_{12}f_5 &=\\
            &(x^{17} + 2x^{15}y^2 - 60x^{13}y^4 - 14x^{11}y^6 + 110x^9y^8 +22x^7y^{10} + 212x^5y^{12} - 10x^3y^{14} - 7xy^{16}\\
            &: -7x^{16}y - 10x^{14}y^3 + 212x^{12}y^5 + 22x^{10}y^7 + 110x^8y^9 - 14x^6y^{11} -60x^4y^{13} + 2x^2y^{15} + y^{17}).
        \end{align*}
        Generalizing this to
        \begin{equation*}
            g_{t} = f_{17} + tp_{12}f_5,
        \end{equation*}
        we compute the Macaulay resultant as
        \begin{equation*}
            \Res(g_{t}) = 7784569084133195610168304788058590337455626404233216(t - 1)^6(t- 4/3)^{16}.
        \end{equation*}
        So for any choice of $t$ except $1$ and $4/3$, we produce a new equivariant morphism for the octahedral group.
\begin{code}
\begin{python}
K.<w>=CyclotomicField(8)
R.<t>=PolynomialRing(K)
P.<x,y>=ProjectiveSpace(R,1)
H=End(P)
f7=H([7*x^4*y^3 + y^7 , -x^7 - 7*x^3*y^4])
f5=H([-x^5 + 5*x*y^4, 5*x^4*y - y^5])
f17 = H([x^17 - 60*x^13*y^4 + 110*x^9*y^8 + 212*x^5*y^12 - 7*x*y^16,-7*x^16*y + 212*x^12*y^5 + 110*x^8*y^9 - 60*x^4*y^13 + y^17])
p6 = x^5*y - x*y^5
p8 = x^8 + 14*x^4*y^4+y^8
p12 = x^12 - 33*x^8*y^4 - 33*x^4*y^8 + y^12
M1=matrix(K,2,[(-1+w^2)/2,(-1+w^2)/2, (1+w^2)/2,(-1-w^2)/2])
a=w^3-w
M2=matrix(K,2,[(1+w^2)/a,0, 0,(1-w^2)/a])

f = H([f17[0] + t*f5[0]*p12,f17[1] + t*f5[1]*p12])
f.conjugate(M1)==f, f.conjugate(M2)==f

f.resultant().factor()
\end{python}
\end{code}
    \end{exmp}

\subsection{Exact Automorphism Groups and No Tetrahedral over $\Q$} \label{sect_no_tetra}
    Theorem \ref{thm_exist_aut} tells us that every finite subgroup of $\PGL_2$ occurs as a subgroup of an automorphism group. In this section, we demonstrate something stronger: for every finite $\Gamma \subset \PGL_2$ we can find a $f:\P^1 \to \P^1$ with $\Gamma = \Aut(f)$. Each of these maps is defined over $\Q$ except the one with tetrahedral automorphism group. We show that it is, in fact, not possible to have a $\Q$-rational map with tetrahedral automorphism group.

    Constructing the examples is mainly a computational problem using the Molien series and the Reynolds operator.
    As an example, say we want to construct a map with $\Aut(f) = A_4$ (tetrahedral). Then we can compute the Molien series for all characters of $A_4$ and $S_4$ (octahedral) as
    \begin{align*}
    A_4:&\chi_1:1 + t^8 + 2t^{12} + t^{16} + O(t^{20})\\
    &\chi_2:t^6 + t^{14} + 2t^{18} + t^{22} + O(t^{26})\\
    &\chi_3:t^{10} + t^{14} + t^{18} + 2t^{22} + 2t^{26} + O(t^{30})\\
    &\chi_4:t^4 + t^8 + t^{12} + 2t^{16} + 2t^{20} + O(t^{24})\\
    &\chi_5:t^4 + t^8 + t^{12} + 2t^{16} + 2t^{20} + O(t^{24})\\
    &\chi_6:t^{10} + t^{14} + t^{18} + 2t^{22} + 2t^{26} + O(t^{30})\\
    S_4:&\chi_1:1 + t^8 + t^{12} + t^{16} + t^{18} + O(t^{20})\\
    &\chi_2:t^6 + t^{12} + t^{14} + t^{18} + t^{20} + t^{22} + t^{24} + O(t^{26}).
    \end{align*}
    We are looking for invariants of $A_4$ that are not invariants of $S_4$. We see that there should be a relative invariant of degree $4$ that is not an invariant of $S_4$. Using the Reynolds operator, we have the invariant
    \begin{equation*}
        x^4 + 2\sqrt{-3}x^2y^2 + y^4.
    \end{equation*}
    Via Klein (\ref{eq_Klein}), this yields the map
    \begin{align*}
        f&:\P^1 \to \P^1\\
        f(x,y) &= (\sqrt{-3}x^2y - y^3, x^3 + \sqrt{-3}xy^2).
    \end{align*}
    We can then verify that the automorphism group is exactly $A_4$.
    We can similarly construct a map for each of the other finite subgroups of $\PGL_2$. The list is given in Figure \ref{fig:exact}.
    \begin{figure}
    \caption{Exact Automorphism Groups}
    \label{fig:exact}
    \begin{align*}
        C_n: &f(x,y) = (x^{n+1} + xy^n : y^{n+1})\\
        D_{2n}: &f(x,y) = (y^{n-1} : x^{n-1})\\
        A_4: &f(x,y) = (\sqrt{-3}x^2y - y^3 : x^3 + \sqrt{-3}xy^2)\\
        S_4: &f(x,y) = ( -x^5 + 5xy^4: 5x^4y - y^5)\\
        A_5: &f(x,y) = (-(x^{11} + 66x^6y^5 - 11xy^{10}) : 11x^{10}y + 66x^5y^6 - y^{11})
    \end{align*}
    \end{figure}
    Notice that each of these examples is defined over $\Q$ except for $A_4$ (tetrahedral).
    This leads to the question as to whether any map defined over $\Q$ has a tetrahedral automorphism group and to the more general question: How does the field of definition affect the existence of automorphisms?

    We show next that the tetrahedral group does not occur as the automorphism group of any $f:\P^1 \to \P^1$ defined over $\Q$ and do not address the more general question.

    \begin{thm}\label{thm_no_tetra}
        There is no morphism $f:\P^1 \to \P^1$ defined over $\Q$ which has tetrahedral group (as represented above) as automorphism group.
    \end{thm}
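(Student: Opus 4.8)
The plan is to argue by contradiction, exploiting that $A_4$ is an index-two normal subgroup of the octahedral group $S_4$ and that $S_4 = N_{\PGL_2}(A_4)$ is the unique finite subgroup of $\PGL_2$ strictly containing $A_4$ apart from $A_5$. Suppose $f\colon\P^1\to\P^1$ is defined over $\Q$ with $\Aut(f)=A_4$ (the displayed representation). Since the only finite subgroups of $\PGL_2$ containing $A_4$ are $A_4$, $S_4$, and $A_5$, it suffices to show that any $f$ over $\Q$ with $A_4\subseteq\Aut(f)$ in fact has $S_4\subseteq\Aut(f)$: this contradicts $\Aut(f)=A_4$ (and an $A_5$ cannot arise, as it would not contain $A_4$ as the full group). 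By Theorem~\ref{thm_form_aut}, equivalently Proposition~\ref{prop_DM}, the condition $A_4\subseteq\Aut(f)$ means $f$ corresponds to an $A_4$-invariant $1$-form $\theta$, and because $f$ is defined over $\Q$ we may normalize $\theta$ to have coefficients in $\Q$. The form $\theta$ is a relative invariant for some linear character $\chi$ of $A_4$; since $A_4^{\mathrm{ab}}\cong C_3$, we have $\chi\in\{1,\chi_\omega,\chi_{\omega^2}\}$, where $\chi_\omega$ sends an order-three generator to $\omega=\zeta_3$.

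The cleanest part of the argument is a Galois computation showing that a $\Q$-rational invariant form can only carry the trivial character. Let $\sigma_0$ generate $\Gal(\Q(\sqrt{-3})/\Q)$. For the order-three generator $g_1=\bigl(\begin{smallmatrix}i&i\\1&-1\end{smallmatrix}\bigr)$ and the involution $g_2=\bigl(\begin{smallmatrix}-1&0\\0&1\end{smallmatrix}\bigr)$, a direct matrix computation gives $\sigma_0(g_1)=g_2 g_1$, while $\sigma_0$ fixes the rational generators of the normal Klein four-group $V$ pointwise; hence $\sigma_0$ acts on this copy of $A_4$ by an \emph{inner} automorphism (every automorphism fixing $V$ pointwise is conjugation by an element of $V$). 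Since a character is a class function, inner automorphisms fix it, so applying $\sigma_0$ to $\gamma^{\ast}\theta=\chi(\gamma)\theta$ and using $\sigma_0\theta=\theta$ yields $\chi(\gamma)=\sigma_0(\chi(\gamma))=\overline{\chi(\gamma)}$ for all $\gamma$. Thus $\chi$ is real-valued and $\chi=1$: a $\Q$-rational $A_4$-invariant form must carry the trivial $C_3$-character, and in particular the two genuinely complex relative invariants $G=x^4+2\sqrt{-3}\,x^2y^2+y^4$ and $\bar G=x^4-2\sqrt{-3}\,x^2y^2+y^4$ (with $G\bar G=x^8+14x^4y^4+y^8$ an $S_4$-invariant), which are interchanged by $\sigma_0$, cannot be used over $\Q$.

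It then remains to show that a trivial-character $A_4$-invariant form over $\Q$ produces an $f$ that is also invariant under a generator $\tau\in S_4\setminus A_4$, giving $S_4\subseteq\Aut(f)$. The plan is to decompose the trivial-character $A_4$-invariant forms into $\tau$-eigenspaces, identifying the $(+1)$-eigenspace with $S_4$-invariant forms and the $(-1)$-eigenspace with $S_4$-relative (sign) forms such as those built from $P_6=x^5y-xy^5$, and to argue that a $\Q$-structure lands each rational form in a single eigenspace; since both the trivial and the sign character of $S_4$ restrict to the trivial character of $A_4$ and are absorbed projectively, the associated $f$ would be $\tau$-equivariant. This last step is the main obstacle, and it is genuinely delicate: one must control the entire bigraded module of invariant forms, not merely the low-degree generators, and rule out a $\Q$-rational form that mixes the two $\tau$-eigenspaces while still defining a morphism. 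This is exactly the bookkeeping that distinguishes the obstructed $\chi_\omega$ case from the trivial case, and verifying that no such mixed rational form gives automorphism group precisely $A_4$ is where the real content of the theorem must be located.
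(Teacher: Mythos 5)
Your Galois step is essentially sound, modulo one bookkeeping fix: an element $\sigma_0$ of $\Gal(\Q(\sqrt{-3})/\Q)$ cannot literally act on $g_1$, whose entries lie in $\Q(i)$; you should work in $\Gal(\Q(i,\sqrt{-3})/\Q)$ and take complex conjugation, for which your identity $\sigma_0(g_1)=g_2g_1$ does hold (it is conjugation by $g_2g_3$, so the action is indeed inner). Even more simply, an element fixing $\Q(i)$ and moving $\sqrt{-3}$ fixes the group pointwise and forces $\chi=\bar{\chi}$ at once. The fatal problem is the step you flag at the end, and it is worse than ``delicate'': the implication you still need --- that a $\Q$-rational, trivial-character $A_4$-invariant $1$-form yields an $S_4$-symmetric map --- is false. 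In Proposition \ref{prop_DM} take $F=s_1=xy(x^4-y^4)$ and $G=s_2=x^8+14x^4y^4+y^8$, so $\deg G=\deg F+2$. Both are rational, both restrict to trivial-character invariants of the binary tetrahedral group, but $\tau^*s_1=-s_1$ while $\tau^*s_2=s_2$ for $\tau=\operatorname{diag}(i,1)\in S_4\setminus A_4$. Hence $\theta=s_1\lambda+ds_2$ is a rational $A_4$-invariant form of trivial character, while $\tau^*\theta=-s_1\lambda+ds_2$ is not proportional to $\theta$. The associated map is
\begin{equation*}
f=[\,x^6y+112x^4y^3-x^2y^5+16y^7\;:\;-16x^7+x^5y^2-112x^3y^4-xy^6\,],
\end{equation*}
equivalently $f=\tfrac{s_1}{2}\cdot\operatorname{id}$ minus the Klein map of $s_2$, exactly the module-of-equivariants construction of Theorem \ref{thm_exist_aut} with the coefficient $s_1$ an $A_4$-invariant that is only a \emph{sign}-relative invariant of $S_4$. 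Writing $2f_1=yP(x^2,y^2)$ and $2f_2=-xP(y^2,x^2)$ with $P(u,v)=u^3+112u^2v-uv^2+16v^3$, a common zero of the coordinates would force $R(t)=t^3+112t^2-t+16$ to have a root pair $t,1/t$, hence third root $-16$; but $R(-16)=24608$ and $R(\pm1)=128$, so $f$ is a degree-$7$ morphism over $\Q$ with $A_4\subseteq\Aut(f)$ and $\tau\notin\Aut(f)$. Since $S_4=N_{\PGL_2}(A_4)$ and the minimal nontrivial icosahedral equivariant has degree $11$, this gives $\Aut(f)=A_4$ exactly.

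So the $\tau$-eigenspace mixing you worried about genuinely occurs over $\Q$, and your proposal does not prove the theorem. For comparison, the paper's proof stalls at precisely the same point, though it phrases the character step via Blichfeldt's generators $t_1,t_2,t_3$ rather than your Galois/inner-automorphism argument: it correctly shows that rationality forces $F$ and $G$ individually rational (hence polynomials in $s_1,s_2,s_3$), but then concludes ``octahedral symmetries'' without verifying that $F$ and $G$ carry the \emph{same} sign character of $S_4$ --- which fails for the pair $(s_1,s_2)$ above. You have therefore located the real obstruction correctly; the trouble is that it is not a missing verification but an apparent counterexample to the statement itself, so no bookkeeping along these lines (yours or the paper's) can close the gap without restricting the claim, e.g.\ to maps of Klein's form $F=0$, where the paper's argument is complete.
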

    \begin{proof}
        By Blichfeldt \cite[\S 104-105]{Blichfeldt2} every invariant $F$ of the tetrahedral group can be written as a product of powers of the following three invariants
        \begin{align*}
            t_1=&x^4 + 2\sqrt{-3}x^2y^2 + y^4\\
            t_2=&x^4 - 2\sqrt{-3}x^2y^2 + y^4\\
            t_3=&xy(x^4-y^4).
        \end{align*}
        Similarly, invariants for the octahedral group can be constructed from
        \begin{align*}
            s_1=&xy(x^4-y^4)\\
            s_2=&x^8 + 14x^4y^4 + y^8 = t_1t_2\\
            s_3=&x^{12} - 33x^8y^4 - 33x^4y^8 + y^{12}.
        \end{align*}
        Note that $s_2 = t_1t_2$.

        For a map of Klein's form (\ref{eq_Klein}) to be tetrahedral and defined over $\Q$, its invariant must be of the form $t_1^{a_1}t_2^{a_1}t_3^{a_3}$. However, this is the same as $s_2^{a_1}s_1^{a_2}$ so it will have octahedral symmetries.

        We also need to consider maps that come from a nontrivial $F,G$ pair for the Doyle-McMullen construction of Proposition \ref{prop_DM} with at least one not defined over $\Q$. Actually, if one of $F$ or $G$ is not defined over $\Q$, then to end up with a map defined over $\Q$ we must have both not defined over $\Q$. Assume that $F$ has a term of the form $c x^ny^m$ with $c \not\in \Q$. We are constructing the coordinates of the map as $xF/2 + G_y$ and $yF/2 - G_x$. In the first coordinate, we must have a monomial $\frac{cx^{n+1}y^m}{2}$ coming from $xF/2$, so $G$ must have a term $-\frac{cx^{n+1}y^{m+1}}{2(m+1)}$ for the map to be defined over $\Q$. Similarly, from the second coordinate we see that $G$ has a term $\frac{cx^{n+1}y^{m+1}}{2(n+1)}$. We conclude that $c=0$.
    \end{proof}

\begin{code}
\begin{verbatim}
R<x> := PolynomialRing(Integers());
f := x^2+1;
K<i> := NumberField(f);
R<x> := PolynomialRing(K);
f := x^2-2;
L<a> := ext<K|f>;
R<x> := PolynomialRing(L);
f := x^2+3;
L<b> := ext<L|f>;

Oct := MatrixGroup< 2, L | [(-1+i)/2,(-1+i)/2, (1+i)/2,(-1-i)/2], [(1+i)/a,0, 0,(1-i)/a] >;
Tet := MatrixGroup< 2, L | [(-1+i)/2,(-1+i)/2, (1+i)/2,(-1-i)/2], [0,i, -i,0] >;
Di8 := MatrixGroup< 2, L | [0,1,1,0], [i,0,0,-i]>;

PS<t>:=PowerSeriesRing(L, 40);
MOct:= PS!MolienSeries(Oct);
MTet:= PS!MolienSeries(Tet);
MDi8 :=PS!MolienSeries(Di8);

//This needs to also divide by relative invariants to work...
MTet/MOct;
MolienSeries(Tet)/MolienSeries(Oct);


//Compute the Molien series for linear characters.



L<w> := CyclotomicField(24);
i:=w^6;
a:= - w^5 + w^3 + w;
//Tetrahedral
H := MatrixGroup< 2, L| [(-1+i)/2,(-1+i)/2, (1+i)/2,(-1-i)/2], [0,i, -i,0]  >;
//Octahedral
//H:= MatrixGroup< 2, L | [(-1+i)/2,(-1+i)/2, (1+i)/2,(-1-i)/2], [(1+i)/a,0,0,(1-i)/a]  >;

CT:=CharacterTable(H);  //first 6 have no zeros

for j:=1 to 6 do
  xi:=CT[j];
  S:=Inverse(NumberingMap(H));
  R<t>:=PolynomialRing(L);
  Mol:=0;
  for i:=1 to Order(H) do
    Mol := Mol + (R!xi(S(i)))/(Determinant(DiagonalMatrix([1,1])- t*Matrix(R,S(i))));
  end for;
  Mol:=Mol/Order(H);
  PS<t>:=PowerSeriesRing(L);
  PS!Mol;
end for;

//Apply the Reynolds operator
L<w> := CyclotomicField(24);
i:=w^6;
a:= - w^5 + w^3 + w;
//Tetrahedral
Tet := MatrixGroup< 2, L| [(-1+i)/2,(-1+i)/2, (1+i)/2,(-1-i)/2], [0,i, -i,0]  >;

R<x,y>:=PolynomialRing(L,2);
X:=Matrix(R,2,1,[x,y]);
M:=MonomialsOfDegree(R,4);
S:=Inverse(NumberingMap(Tet));
CT:=CharacterTable(Tet);
xi:=CT[4];
INV:={};

for i:=1 to #M do
  T:=0;
  F:=M[i];
  for i:= 1 to #Tet do
    t:=Matrix(R,S(i))*X;
    G:=(L!xi(S(i)))*Evaluate(F,[t[1,1],t[2,1]]);
    T:=T+G;
  end for;
  INV:=Include(INV, T);
end for;
INV;
\end{verbatim}

\begin{verbatim}
//determine if the group is in the automorphism group
R<x> := PolynomialRing(Integers());
f := x^2+1;
K<i> := NumberField(f);
R<x> := PolynomialRing(K);
f := x^2-2;
L<a> := ext<K|f>;
R<x> := PolynomialRing(L);
f := x^2+3;
L<b> := ext<L|f>;

L<w> := CyclotomicField(24);
i:=w^6;
a:= - w^5 + w^3 + w;

Oct := MatrixGroup< 2, L | [(-1+i)/2,(-1+i)/2, (1+i)/2,(-1-i)/2], [(1+i)/a,0, 0,(1-i)/a]  >;
Tet := MatrixGroup< 2, L | [(-1+i)/2,(-1+i)/2, (1+i)/2,(-1-i)/2], [0,i, -i,0]  >;
Di8 := MatrixGroup< 2, L | [0,1,1,0], [i,0,0,-i]>;

K<z5>:=CyclotomicField(5);
R<x> := PolynomialRing(K);
f:=x^2+1;
L1<i>:=ext<K | f>;
R<x,y> := PolynomialRing(L1,2);
f:=x^2-2;
L<b>:=ext<L1 | f>;
R<x,y> := PolynomialRing(L,2);


X:=Matrix(R,2,1,[x,y]);
a:=2*z5^3 + 2*z5^2 + 1; //sqrt(5)
Icos := MatrixGroup< 2, L| [z5^3,0, 0,z5^2], [0,1, -1,0], [(z5^4-z5)/a, (z5^2-z5^3)/a, (z5^2-z5^3)/a, -(z5^4-z5)/a]  >;


R<x> := PolynomialRing(Integers());
f := x^2+1;
K<i> := NumberField(f);
R<x> := PolynomialRing(K);
f:=x^2+3;
L<b>:=ext<K | f>;
R<x,y> := PolynomialRing(L,2);

Tet := MatrixGroup< 2, L | [(-1+i)/2,(-1+i)/2, (1+i)/2,(-1-i)/2], [0,i, -i,0]  >;
H:=Tet;
R<x,y>:=PolynomialRing(L,2);
X:=Matrix(R,2,1,[x,y]);
//F:=Matrix(R,2,1,[11*x^8*y^3 + 22*x^4*y^7 - y^11, x^11 - 22*x^7*y^4 - 11*x^3*y^8]); //map
//F:=Matrix(R,2,1,[-4*y^3 - 2*(4*w^4-2)*x^2*y, 2*(4*w^4-2)*x*y^2 + 4*x^3]); //tet
F:=Matrix(R,2,1,[-x^5 + 5*x*y^4, 5*x^4*y - y^5]); //Oct
F:=Matrix(R,2,1,[-(x^11 + 66*x^6*y^5 - 11*x*y^10), 11*x^10*y + 66*x^5*y^6 - y^11]);  //icos

F:=Matrix(R,2,1,[-b*x^2*y - y^3, x^3 + b*x*y^2]);

Auts:={};
S:=Inverse(NumberingMap(H));
for i:=1 to Order(H) do
    t:=Matrix(R,S(i))*X;
    G:=Matrix(R,2,1,[Evaluate(F[1,1],[t[1,1],t[2,1]]),Evaluate(F[2,1],[t[1,1],t[2,1]])]);
    C:=Matrix(R,S(i))^(-1)*G;
    if C[1,1]/C[2,1] eq F[1,1]/F[2,1] then
        Auts:=Include(Auts, S(i));
    end if;
end for;
#Auts;

\end{verbatim}
\end{code}

\begin{code}
\begin{verbatim}
L<w> := CyclotomicField(24);
i:=w^6;
a:= - w^5 + w^3 + w;
//Tetrahedral
H := MatrixGroup< 2, L| [(-1+i)/2,(-1+i)/2, (1+i)/2,(-1-i)/2], [0,i, -i,0]  >;

CT:=CharacterTable(H);  //first 6 have no zeros
MTet:=0;
for j:=1 to 6 do
  xi:=CT[j];
  S:=Inverse(NumberingMap(H));
  R<t>:=PolynomialRing(L);
  Mol:=0;
  for i:=1 to Order(H) do
    Mol := Mol + (R!xi(S(i)))/(Determinant(DiagonalMatrix([1,1])- t*Matrix(R,S(i))));
  end for;
  Mol:=Mol/Order(H);
  PS<t>:=PowerSeriesRing(L,100);
  MTet := MTet+PS!Mol;
end for;

//Octahedral
H:= MatrixGroup< 2, L | [(-1+i)/2,(-1+i)/2, (1+i)/2,(-1-i)/2], [(1+i)/a,0,0,(1-i)/a]  >;


CT:=CharacterTable(H);  //first 6 have no zeros
MOct:=0;
for j:=1 to 6 do
  xi:=CT[j];
  S:=Inverse(NumberingMap(H));
  R<t>:=PolynomialRing(L);
  Mol:=0;
  for i:=1 to Order(H) do
    Mol := Mol + (R!xi(S(i)))/(Determinant(DiagonalMatrix([1,1])- t*Matrix(R,S(i))));
  end for;
  Mol:=Mol/Order(H);
  PS<t>:=PowerSeriesRing(L,100);
  print Mol;
  MOct := MOct+PS!Mol;
end for;
MTet/MOct;
\end{verbatim}

\begin{verbatim}
L<w> := CyclotomicField(24);
i:=w^6;
a:= - w^5 + w^3 + w;

//Tetrahedral
H := MatrixGroup< 2, L| [(-1+i)/2,(-1+i)/2, (1+i)/2,(-1-i)/2], [0,i, -i,0]  >;

//Octahedral
//H:= MatrixGroup< 2, L | [(-1+i)/2,(-1+i)/2, (1+i)/2,(-1-i)/2], [(1+i)/a,0,0,(1-i)/a]  >;


//Apply the equivariant Reynolds operator
R<x,y> := PolynomialRing(L,2);
X:=Matrix(R,2,1,[x,y]);
M:=MonomialsOfDegree(R,7);
Auts:={};


CT:=CharacterTable(H);
xi:=CT[3];
S:=Inverse(NumberingMap(H));
for xxi:=1 to #M do
  for yi:=1 to #M do
      T:=Matrix(2,1,[0,0]);
      F:=Matrix(R,2,1,[M[xxi],M[yi]]);
      for i:= 1 to #H do
        t:=Matrix(R,S(i))*X;
        G:=Matrix(R,2,1,[Evaluate(F[1,1],[t[1,1],t[2,1]]),Evaluate(F[2,1],[t[1,1],t[2,1]])]);
        C:=Matrix(R,S(i))^(-1)*G;
        T:=T+R!L!xi(S(i))*C;
      end for;
      g:=GCD([T[1,1],T[2,1]]);
      if g ne 0 then
        T[1,1]:=T[1,1]/g;
        T[2,1]:=T[2,1]/g;
      end if;
      Auts:=Include(Auts, T);
    end for;
  end for;
Auts;
\end{verbatim}
\end{code}

\section{Finite Subgroups of $\PGL_{3}$} \label{sect.P2}
     For $N=1$, $2$, and $3$ there are classical descriptions of the finite subgroups of $\PGL_{N+1}$. The generators for $\PGL_2$ subgroups as elements of $\SL_2$ was provided by Silverman \cite{Silverman12} and restated above in Section \ref{sect.4.1}. In this section, we provide generators of the finite subgroups of $\PGL_3$ as elements of $\SL_{3}$ using the classical description from Blichfeldt \cite[Chapters III and V]{Blichfeldt}.  We denote $\zeta_n$ as a primitive $n$-th root of unity. In $\PGL_3$ we have the additional complication that there are multiple inequivalent representations of the finite groups.  These representations can have different behavior with respect to the existence of automorphisms.

     \begin{exmp}
        We can find two inequivalent representations of $C_5$ as
        \begin{align*}
            C_5 = \left \langle \begin{pmatrix}
              \zeta_5&0&0 \\ 0&1&0 \\ 0&0&\zeta_5^{-1}
            \end{pmatrix} \right \rangle\\
            C_5' = \left \langle \begin{pmatrix}
              \zeta_5&0&0 \\ 0&\zeta_5^2&0 \\ 0&0&\zeta_5^2
            \end{pmatrix}\right \rangle
        \end{align*}
        We have that the morphism $f$ has $C_5 \subset \Aut(f)$ for
        \begin{equation*}
            f(x,y,z) = [z^4,y^4,x^4].
        \end{equation*}
        However, there is no morphism $f: \P^2 \to \P^2$ of degree~$4$ such that  $C_5' \subseteq \Aut(f)$.

        \begin{code}
\begin{verbatim}
K<a>:=CyclotomicField(5);
H := MatrixGroup< 3, K| [a,0,0, 0,a^2,0, 0,0,a^2] >;
H := MatrixGroup< 3, K| [a,0,0, 0,1,0, 0,0,a^4]>;
S:=Inverse(NumberingMap(H));
R<x,y,z>:=PolynomialRing(K,3);
X:=Matrix(R,3,1,[x,y,z]);
M:=MonomialsOfDegree(R,4);
Auts:={};

for xi:=1 to #M do
  for yi:=1 to #M do
    for zi:=1 to #M do
        T:=Matrix(3,1,[0,0,0]);
        F:=Matrix(R,3,1,[M[xi],M[yi],M[zi]]);
      for i:= 1 to #H do
        t:=Matrix(R,S(i))*X;
        G:=Matrix(R,3,1,[Evaluate(F[1,1],[t[1,1],t[2,1],t[3,1]]),Evaluate(F[2,1],[t[1,1],t[2,1],t[3,1]]),Evaluate(F[3,1],[t[1,1],t[2,1],t[3,1]])]);
        C:=Matrix(R,S(i))^(-1)*G;
        T:=T+C;
      end for;
      g:=GCD([T[1,1],T[2,1],T[3,1]]);
     if g ne 0 then
        T[1,1]:=T[1,1]/g;
        T[2,1]:=T[2,1]/g;
        T[3,1]:=T[3,1]/g;
      end if;
      Auts:=Include(Auts, T);
    end for;
  end for;
end for;
Auts;
\end{verbatim}
\end{code}
     \end{exmp}

\begin{code}
\begin{verbatim}
A:=CharacterTable(H);
<IsFaithful(A[i]) : i in {1..#A}>;
CharacterDegrees(H);
\end{verbatim}
\end{code}

\begin{code}
\begin{verbatim}
T:=InvariantRing(H);
PS<t>:=PowerSeriesRing(L);
MolienSeries(H);
PS!MolienSeries(H);
FundamentalInvariants(T);


A:=InvariantsOfDegree(T,6)[1];
B:=InvariantsOfDegree(T,12)[1];
AssignNames(~T,["x","y","z"]);
Ax:=Derivative(A,1);
Ay:=Derivative(A,2);
Az:=Derivative(A,3);
Bx:=Derivative(B,1);
By:=Derivative(B,2);
Bz:=Derivative(B,3);

F:=Matrix(R,3,1,[Ay*Bz-Az*By,Az*Bx-Ax*Bz,Ax*By-Ay*Bx]);
r:=GCD([F[1,1],F[2,1],F[3,1]]);
F[1,1]:=F[1,1]/r;
F[2,1]:=F[2,1]/r;
F[3,1]:=F[3,1]/r;

t:=Matrix(R,H.1)*X;
G:=Matrix(R,3,1,[Evaluate(F[1,1],[t[1,1],t[2,1],t[3,1]]),Evaluate(F[2,1],[t[1,1],t[2,1],t[3,1]]),Evaluate(F[3,1],[t[1,1],t[2,1],t[3,1]])]);
C:=Matrix(R,H.1)^(-1)*G;
C eq F;
\end{verbatim}
\end{code}

    \begin{rem}
        The number of inequivalent representation of $C_n$ tends to infinity as $n$ tends to infinity.
    \end{rem}

    \subsubsection{Intransitive and Imprimitive Groups}
    \begin{enumerate}
        \item[(A)] Cyclic Group of order $n$, $C_n$ generated by
                \[\begin{pmatrix}
                    \zeta_n^a&0&0\\0&\zeta_n^b&0\\0&0&1
                    \end{pmatrix}
                \]
                where $\gcd(a,n) = 1$ or $\gcd(b,n) = 1$.

        \item[(B)] Subgroups of the form
            \[\begin{pmatrix} \zeta_p&0&0\\0&a&b \\0&c&d \end{pmatrix},\]
            where the lower right $2 \times 2$ transformation is one of the $\PGL_2$ transformations below. In this case we use the representations of Blichfeldt \cite[Chapters III]{Blichfeldt} instead of Silverman \cite{Silverman12} to minimize the order of the representation. In particular, a representation of a $\PGL_2$ subgroup of order $g$ can have order $gt$ in $\GL_2$ for some $t$. We chose a representation of this form that minimizes $t$. Note that it is possible to represent the symmetries of the solids as dimension 3 representations of exact order ($t=1$).
            \begin{enumerate}
                \item[(B1)] Dihedral of order $2q$:
                    \[
                    \left\langle
                    \begin{pmatrix} \zeta_q &0\\0&1 \end{pmatrix}, \quad \begin{pmatrix} 0 & 1\\ 1 & 0 	 \end{pmatrix}\right\rangle.
                     \]

\begin{code}
\begin{verbatim}
L<w>:=CyclotomicField(12);
R<x,y,z> := PolynomialRing(L,3);
X:=Matrix(R,3,1,[x,y,z]);

a := Matrix(L,3,[w^4,0,0, 0,1,0, 0,0,1]);
b := Matrix(L,3,[1,0,0, 0,w^2,0, 0,0,1/w^2]);
c := Matrix(L,3,[1,0,0, 0,0,1, 0,1,0]);
H := MatrixGroup< 3, L| [a,b,c] >;




phiH:=Inverse(NumberingMap(H));
MR:=Parent(a);
LH:=[];
for i:=1 to #H do
y:=1/phiH(i)[1,1]*MR!(phiH(i));
if not y in LH then
Append(~LH,y);
end if;
end for;
S:=CyclicSubgroups(H);
for T in S do
G:=T`subgroup;
phiG:=Inverse(NumberingMap(G));
LG:=[];
for i:=1 to #G do
y:=1/phiG(i)[1,1]*MR!(phiG(i));
if not y in LG then
Append(~LG,y);
end if;
end for;
print #LH,#LG;
end for;

FPGroup(H2);

for j:=1 to 20 do
  if c*a^j eq a^j*c then
    print j;
  end if;
end for;

H := MatrixGroup< 3, L| [w^3,0,0, 0,w^2,0, 0,0,w^(-5)],[1,0,0, 0,0,1, 0,1,0] >;
s := Matrix(R,3,[w^3,0,0, 0,w^2,0, 0,0,w^(-5)]);
t := Matrix(R,3,[1,0,0, 0,0,1, 0,1,0]);

H:=MatrixGroup<3,L | [a,b]>;

s := Matrix(R,3,[w,0,0, 0,w^2,0, 0,0,w^(-2)]);
t := Matrix(R,3,[1,0,0, 0,0,1, 0,1,0]);
H := MatrixGroup< 3, L| [s,t] >;


for j:=1 to 20 do
  if t*s^j eq s^j*t then
    print j;
  end if;
end for;


FPGroup(H2);


a := Matrix(L,2,[w^2,0, 0,1]);
b := Matrix(L,2,[0,1, 1,0]);
H := MatrixGroup< 2, L| [a,b] >;

phiH:=Inverse(NumberingMap(H));
MR:=Parent(a);
LH:=[];
for i:=1 to #H do
  z:=MR!(phiH(i));
  if z[1,1] ne 0 then
    y:=1/phiH(i)[1,1]*z;
  else
    y:=1/phiH(i)[1,2]*z;
  end if;
  if not y in LH then
    Append(~LH,y);
  end if;
end for;
S:=CyclicSubgroups(H);
for T in S do
  G:=T`subgroup;
  phiG:=Inverse(NumberingMap(G));
  LG:=[];
  for i:=1 to #G do
    z:=MR!(phiG(i));
    if z[1,1] ne 0 then
      y:=1/phiG(i)[1,1]*z;
    else
      y:=1/phiG(i)[1,2]*z;
    end if;
    if not y in LG then
      Append(~LG,y);
    end if;
  end for;
print #LH,#LG;
end for;
\end{verbatim}
\end{code}

                \item[(B2)] Tetrahedral of order $12$:

                    \begin{equation*}
                         \left\langle \frac{1}{2}\begin{pmatrix}
                          -1+i & -1+i \\ 1+i & -1-i
                        \end{pmatrix},
                        \quad \begin{pmatrix}
                          i & 0 \\ 0& -i
                        \end{pmatrix} \right\rangle.
                    \end{equation*}

\begin{code}
\begin{verbatim}
K<a>:=CyclotomicField(7);
R<x>:=PolynomialRing(K);
f:=x^2+1;
L1<i>:=ext<K | f>;
R<x> := PolynomialRing(L1);
f := x^2-2;
L<a> := ext<L1|f>;
R<x,y,z> := PolynomialRing(L,3);
X:=Matrix(R,3,1,[x,y,z]);

H := MatrixGroup< 3, L| [1,0,0, 0,1,0, 0,0,1], [1,0,0, 0,(-1+i)/2,(-1+i)/2, 0,(1+i)/2,(-1-i)/2], [1,0,0, 0,0,i, 0,-i,0]  >;


H := MatrixGroup< 3, L| [1,0,0, 0,1,0, 0,0,1], [1,0,0, 0,(-1+i)/2,(-1+i)/2, 0,(1+i)/2,(-1-i)/2], [1,0,0, 0,i,0, 0,0,-i]  >;

Subgroups(H: IsCyclic := true);


//3 dim representation of tetrahedral??
H2 := MatrixGroup< 3, L| [1,0,0, 0,-1,0, 0,0,-1], [-1,0,0, 0,1,0, 0,0,-1], [-1,0,0, 0,-1,0, 0,0,1], [0,0,1, 1,0,0, 0,1,0]  >;
\end{verbatim}
\end{code}
%

                \item[(B3)] Octahedral of order $24$:


                     \begin{equation*}
                      \left\langle \frac{1}{2}\begin{pmatrix}
                          -1+i & -1+i \\ 1+i & -1-i
                        \end{pmatrix},
                       \quad \frac{1}{\sqrt{2}}\begin{pmatrix} 1+i & 0\\ 0 & 1-i \end{pmatrix} \right\rangle.
                     \end{equation*}


\begin{code}
\begin{verbatim}
K<w>:=CyclotomicField(7);
R<x>:=PolynomialRing(K);
f:=x^2+1;
L1<i>:=ext<K | f>;
R<x> := PolynomialRing(L1);
f := x^2-2;
L<a> := ext<L1|f>;
R<x,y,z> := PolynomialRing(L,3);
X:=Matrix(R,3,1,[x,y,z]);

H := MatrixGroup< 3, L| [w^5,0,0, 0,w,0, 0,0,w],[1,0,0, 0,(-1+i)/2,(-1+i)/2, 0,(1+i)/2,(-1-i)/2], [1,0,0, 0,(1+i)/a,0, 0,0,(1-i)/a]  >;

//3 dim representation of octahedral??
H2:= MatrixGroup< 3, L| [0,1,0, 1,0,0, 0,0,1], [0,0,1, 1,0,0, 0,1,0], [-1,0,0, 0,1,0, 0,0,-1], [1,0,0, 0,-1,0, 0,0,-1]>;
\end{verbatim}
\end{code}

                \item[(B4)] Icosahedral of order $60$


                    \begin{equation*}
                        \left\langle \frac{1}{2}\begin{pmatrix}
                          -1+i & -1+i \\ 1+i & -1-i
                        \end{pmatrix}, \quad
                        \begin{pmatrix} i & 0 \\ 0& -i \end{pmatrix}, \quad
                        \begin{pmatrix} i/2 & \beta - i\gamma \\ -\beta - i\gamma& -i/2 \end{pmatrix}
                        \right\rangle.
                    \end{equation*}
                    where $\beta = \frac{1-\sqrt{5}}{4}$ and $\gamma = \frac{1+\sqrt{5}}{4}$.
\begin{code}
\begin{verbatim}
K<w>:=CyclotomicField(15);
R<x> := PolynomialRing(K);
f:=x^2+1;
L<i>:=ext<K | f>;
R<x,y,z> := PolynomialRing(L,3);
X:=Matrix(R,3,1,[x,y,z]);
z5:=w^3;  //\zeta_5
a:=2*z5^3 + 2*z5^2 + 1; //sqrt(5)
b := (1-a)/4;
g := (1+a)/4;

H := MatrixGroup< 3, L| [1,0,0, 0,1,0, 0,0,1], [1,0,0, 0,z5^3,0, 0,0,z5^2], [1,0,0, 0,0,1, 0,-1,0], [1,0,0, 0,(z5^4-z5)/a, (z5^2-z5^3)/a, 0,(z5^2-z5^3)/a, -(z5^4-z5)/a]  >;

H := MatrixGroup< 3, L| [1,0,0, 0,1,0, 0,0,1],[1,0,0, 0,(-1+i)/2,(-1+i)/2, 0,(1+i)/2,(-1-i)/2], [1,0,0, 0,i,0, 0,0,-i], [1,0,0, 0,i/2,b-i*g, 0, -b-i*g, -i/2]  >;


H := MatrixGroup< 3, L| [w,0,0, 0,1,0, 0,0,1], [1,0,0, 0,z5^3,0, 0,0,z5^2], [1,0,0, 0,0,1, 0,-1,0], [1,0,0, 0,(z5^4-z5)/a, (z5^2-z5^3)/a, 0,(z5^2-z5^3)/a, -(z5^4-z5)/a]  >;

a := Matrix(L,3,[w^8,0,0, 0,1,0, 0,0,1]);

phiH:=Inverse(NumberingMap(H));
MR:=Parent(a);
LH:=[];
for i:=1 to #H do
  z:=MR!(phiH(i));
  if z[1,1] ne 0 then
    y:=1/phiH(i)[1,1]*z;
  else
    y:=1/phiH(i)[1,2]*z;
  end if;
  if not y in LH then
    Append(~LH,y);
  end if;
end for;
S:=CyclicSubgroups(H);
for T in S do
  G:=T`subgroup;
  phiG:=Inverse(NumberingMap(G));
  LG:=[];
  for i:=1 to #G do
    z:=MR!(phiG(i));
    if z[1,1] ne 0 then
      y:=1/phiG(i)[1,1]*z;
    else
      y:=1/phiG(i)[1,2]*z;
    end if;
    if not y in LG then
      Append(~LG,y);
    end if;
  end for;
print #LH,#LG;
end for;


//The next two are from GAP:  IrreducibleRepresentations(AlternatingGroup(5));
w:=z5;
H1:=MatrixGroup<3,L|[ 1,-z5^2-z5^3,0, -z5-z5^4,z5^2+z5^3,-z5-z5^4, -1,0,0],[-1,-1,-1, -z5^2-z5^3,-z5^2-z5^3,z5+z5^4, -z5-z5^4,z5^2+z5^3,-z5-z5^4]>;

H2:=MatrixGroup<3,L|[2*z5+2*z5^4,-2*z5-z5^2-z5^3-2*z5^4,z5+z5^4, 2*z5+z5^2+z5^3+2*z5^4,-2*z5-z5^2-z5^3-2*z5^4,z5+z5^4, -z5-z5^4,-1,0],[0,0,1, z5+z5^4,1,0, -1,0,-1]>;
\end{verbatim}
\end{code}

            \end{enumerate}
    \end{enumerate}

    \subsubsection{Imprimitive Groups}
        \begin{enumerate}
            \item[(C)] The group generated by
                \begin{equation*}
                    \left \langle \begin{pmatrix}
                    \zeta_n^a&0&0\\0&\zeta_n^b&0\\0&0&1
                    \end{pmatrix}, \begin{pmatrix} 0&1&0\\0&0&1\\1&0&0 \end{pmatrix}\right \rangle.
                \end{equation*}

\begin{code}
\begin{verbatim}
L<w>:=CyclotomicField(4*9);
R<x,y,z> := PolynomialRing(L,3);
X:=Matrix(R,3,1,[x,y,z]);

H := MatrixGroup< 3, L| [w,0,0, 0,w^(6),0, 0,0,1],[0,1,0, 0,0,1, 1,0,0] >;
Factorization(#H);

a := Matrix(L,3,[w,0,0, 0,w^2,0, 0,0,1]);
b := Matrix(L,3,[0,1,0, 0,0,1, 1,0,0]);
phiH:=Inverse(NumberingMap(H));
MR:=Parent(a);
LH:=[];
for i:=1 to #H do
  z:=MR!(phiH(i));
  if z[1,1] ne 0 then
    y:=1/phiH(i)[1,1]*z;
  elif z[1,2] ne 0 then
    y:=1/phiH(i)[1,2]*z;
  else
    y:=1/phiH(i)[1,3]*z;
  end if;
  if not y in LH then
    Append(~LH,y);
  end if;
end for;
S:=CyclicSubgroups(H);
for T in S do
  G:=T`subgroup;
  phiG:=Inverse(NumberingMap(G));
  LG:=[];
  for i:=1 to #G do
    z:=MR!(phiG(i));
    if z[1,1] ne 0 then
      y:=1/phiG(i)[1,1]*z;
    elif z[1,2] ne 0 then
      y:=1/phiG(i)[1,2]*z;
    else
      y:=1/phiG(i)[1,3]*z;
    end if;
    if not y in LG then
      Append(~LG,y);
    end if;
  end for;
print #LH,#LG;
end for;


SG:=Subgroups(H: IsCyclic := true);
Order(H)/SG[#SG]`order;
\end{verbatim}
\end{code}
            \item[(D)] The group generated by
                \begin{equation*}
                    \left \langle \begin{pmatrix}
                    \zeta_n^a&0&0\\0&\zeta_n^b&0\\0&0&1
                    \end{pmatrix}, \begin{pmatrix} 0&1&0\\0&0&1\\1&0&0 \end{pmatrix},
                    \begin{pmatrix}
                        \zeta_n^x&0&0\\0&0&\zeta_n^y\\0&1&0
                    \end{pmatrix} \right \rangle.
                \end{equation*}
\begin{code}
\begin{verbatim}
L<w>:=CyclotomicField(28);
R<x,y,z> := PolynomialRing(L,3);
X:=Matrix(R,3,1,[x,y,z]);
z:=w^7;
u:=w^4;
v:=w^8;
t:=1;


H := MatrixGroup< 3, L| [z,0,0, 0,z^2,0, 0,0,1],[0,1,0, 0,0,1, 1,0,0],[u,0,0, 0,0,v, 0,1,0] >;
Factorization(#H);

H2 := MatrixGroup< 3, L| [0,1,0, 0,0,1, 1,0,0],[1,0,0, 0,0,1, 0,1,0] >;

a := Matrix(L,3,[w,0,0, 0,1,0, 0,0,1]);
phiH:=Inverse(NumberingMap(H));
MR:=Parent(a);
LH:=[];
for i:=1 to #H do
  z:=MR!(phiH(i));
  if z[1,1] ne 0 then
    y:=1/phiH(i)[1,1]*z;
  elif z[1,2] ne 0 then
    y:=1/phiH(i)[1,2]*z;
  else
    y:=1/phiH(i)[1,3]*z;
  end if;
  if not y in LH then
    Append(~LH,y);
  end if;
end for;
S:=CyclicSubgroups(H);
for T in S do
  G:=T`subgroup;
  phiG:=Inverse(NumberingMap(G));
  LG:=[];
  for i:=1 to #G do
    z:=MR!(phiG(i));
    if z[1,1] ne 0 then
      y:=1/phiG(i)[1,1]*z;
    elif z[1,2] ne 0 then
      y:=1/phiG(i)[1,2]*z;
    else
      y:=1/phiG(i)[1,3]*z;
    end if;
    if not y in LG then
      Append(~LG,y);
    end if;
  end for;
print #LH,#LG;
end for;

SG:=Subgroups(H: IsCyclic := true);
Order(H)/SG[#SG]`order;
\end{verbatim}
\end{code}

        \end{enumerate}

    \subsubsection{Primitive Groups having normal imprimitive subgroups}
        \begin{enumerate}
            \item[(E)] The group of order 36 generated by
                \begin{equation*}
                    \left \langle \begin{pmatrix} \zeta_3&0&0\\0&\zeta_3^2&0\\0&0&1 \end{pmatrix},
                    \begin{pmatrix} 0&1&0\\0&0&1\\1&0&0 \end{pmatrix},
                    \frac{1}{\zeta_3 - \zeta_3^2}\begin{pmatrix} 1&1&1\\1&\zeta_3&\zeta_3^2\\1&\zeta_3^2&\zeta_3 \end{pmatrix} \right \rangle.
                \end{equation*}

%
\begin{code}
\begin{verbatim}
L<z3>:=CyclotomicField(3);
R<x,y,z> := PolynomialRing(L,3);
X:=Matrix(R,3,1,[x,y,z]);

H := MatrixGroup< 3, L| [1,0,0, 0,z3,0, 0,0,z3^2],[0,1,0, 0,0,1, 1,0,0],[1/(z3-z3^2),1/(z3-z3^2),1/(z3-z3^2), 1/(z3-z3^2),z3/(z3-z3^2),z3^2/(z3-z3^2), 1/(z3-z3^2),z3^2/(z3-z3^2),z3/(z3-z3^2)] >;


F:=Matrix(R,3,1,[x*y^3 - x*z^3,-x^3*y + y*z^3,x^3*z - y^3*z]);

F:=Matrix(R,3,1,[3*x^10*y^3 - 3*x^7*y^6 - 6*x^4*y^9 + 18*x^8*y^4*z - 36*x^5*y^7*z - 36*x^9*y^2*z^2 + 18*x^6*y^5*z^2 + 54*x^3*y^8*z^2 + 3*x^10*z^3 + 24*x^7*y^3*z^3 - 21*x^4*y^6*z^3 - 6*x*y^9*z^3 + 18*x^8*y*z^4 + 18*x^2*y^7*z^4 + 18*x^6*y^2*z^5 - 72*x^3*y^5*z^5 - 18*y^8*z^5 - 3*x^7*z^6 - 21*x^4*y^3*z^6 + 42*x*y^6*z^6 - 36*x^5*y*z^7 + 18*x^2*y^4*z^7 + 54*x^3*y^2*z^8 - 18*y^5*z^8 - 6*x^4*z^9 - 6*x*y^3*z^9, -6*x^9*y^4 - 3*x^6*y^7 + 3*x^3*y^10 - 36*x^7*y^5*z + 18*x^4*y^8*z + 54*x^8*y^3*z^2 + 18*x^5*y^6*z^2 - 36*x^2*y^9*z^2 - 6*x^9*y*z^3 - 21*x^6*y^4*z^3 + 24*x^3*y^7*z^3 + 3*y^10*z^3 + 18*x^7*y^2*z^4 + 18*x*y^8*z^4 - 18*x^8*z^5 - 72*x^5*y^3*z^5 + 18*x^2*y^6*z^5 + 42*x^6*y*z^6 - 21*x^3*y^4*z^6 - 3*y^7*z^6 + 18*x^4*y^2*z^7 - 36*x*y^5*z^7 - 18*x^5*z^8 + 54*x^2*y^3*z^8 - 6*x^3*y*z^9 - 6*y^4*z^9, -18*x^8*y^5 - 18*x^5*y^8 - 6*x^9*y^3*z + 42*x^6*y^6*z - 6*x^3*y^9*z + 18*x^7*y^4*z^2 + 18*x^4*y^7*z^2 + 54*x^8*y^2*z^3 - 72*x^5*y^5*z^3 + 54*x^2*y^8*z^3 - 6*x^9*z^4 - 21*x^6*y^3*z^4 - 21*x^3*y^6*z^4 - 6*y^9*z^4 - 36*x^7*y*z^5 - 36*x*y^7*z^5 + 18*x^5*y^2*z^6 + 18*x^2*y^5*z^6 - 3*x^6*z^7 + 24*x^3*y^3*z^7 - 3*y^6*z^7 + 18*x^4*y*z^8 + 18*x*y^4*z^8 - 36*x^2*y^2*z^9 + 3*x^3*z^10 + 3*y^3*z^10]);
GCD([F[1,1],F[2,1],F[3,1]]);

t:=Matrix(R,H.1)*X;
G:=Matrix(R,3,1,[Evaluate(F[1,1],[t[1,1],t[2,1],t[3,1]]),Evaluate(F[2,1],[t[1,1],t[2,1],t[3,1]]),Evaluate(F[3,1],[t[1,1],t[2,1],t[3,1]])]);
C:=Matrix(R,H.1)^(-1)*G;
C eq F;

t:=Matrix(R,H.2)*X;
G:=Matrix(R,3,1,[Evaluate(F[1,1],[t[1,1],t[2,1],t[3,1]]),Evaluate(F[2,1],[t[1,1],t[2,1],t[3,1]]),Evaluate(F[3,1],[t[1,1],t[2,1],t[3,1]])]);
C:=Matrix(R,H.2)^(-1)*G;
C eq F;

t:=Matrix(R,H.3)*X;
G:=Matrix(R,3,1,[Evaluate(F[1,1],[t[1,1],t[2,1],t[3,1]]),Evaluate(F[2,1],[t[1,1],t[2,1],t[3,1]]),Evaluate(F[3,1],[t[1,1],t[2,1],t[3,1]])]);
C:=Matrix(R,H.3)^(-1)*G;
C eq F;

a := Matrix(L,3,[1,0,0, 0,z3,0, 0,0,z3^2]);
phiH:=Inverse(NumberingMap(H));
MR:=Parent(a);
LH:=[];
for i:=1 to #H do
  z:=MR!(phiH(i));
  if z[1,1] ne 0 then
    y:=1/phiH(i)[1,1]*z;
  elif z[1,2] ne 0 then
    y:=1/phiH(i)[1,2]*z;
  else
    y:=1/phiH(i)[1,3]*z;
  end if;
  if not y in LH then
    Append(~LH,y);
  end if;
end for;
#LH;
S:=CyclicSubgroups(H);
for T in S do
  G:=T`subgroup;
  phiG:=Inverse(NumberingMap(G));
  LG:=[];
  for i:=1 to #G do
    z:=MR!(phiG(i));
    if z[1,1] ne 0 then
      y:=1/phiG(i)[1,1]*z;
    elif z[1,2] ne 0 then
      y:=1/phiG(i)[1,2]*z;
    else
      y:=1/phiG(i)[1,3]*z;
    end if;
    if not y in LG then
      Append(~LG,y);
    end if;
  end for;
print #LH,#LG;
end for;
\end{verbatim}
\end{code}

            \item[(F)] The group of order 72 generated by
                \begin{equation*}
                    \left \langle \begin{pmatrix} 1&0&0\\0&\zeta_3&0\\0&0&\zeta_3^2 \end{pmatrix},
                    \begin{pmatrix} 0&1&0\\0&0&1\\1&0&0 \end{pmatrix},
                    \frac{1}{\zeta_3-\zeta_3^2}\begin{pmatrix} 1&1&1\\1&\zeta_3&\zeta_3^2\\1&\zeta_3^2&\zeta_3 \end{pmatrix}, 
                    \frac{1}{\zeta_3-\zeta_3^2}\begin{pmatrix} 1&1&\zeta_3^2\\1&\zeta_3&\zeta_3\\ \zeta_3&1&\zeta_3 \end{pmatrix} \right \rangle. 
                \end{equation*}
%
%
\begin{code}
\begin{verbatim}
L<z3>:=CyclotomicField(3);
R<x,y,z> := PolynomialRing(L,3);
X:=Matrix(R,3,1,[x,y,z]);

H := MatrixGroup< 3, L| [1,0,0, 0,z3,0, 0,0,z3^2],[0,1,0, 0,0,1, 1,0,0],[1/(z3-z3^2),1/(z3-z3^2),1/(z3-z3^2), 1/(z3-z3^2),z3/(z3-z3^2),z3^2/(z3-z3^2), 1/(z3-z3^2),z3^2/(z3-z3^2),z3/(z3-z3^2)], [1/(z3-z3^2),1/(z3-z3^2),z3^2/(z3-z3^2), 1/(z3-z3^2),z3/(z3-z3^2),z3/(z3-z3^2), z3/(z3-z3^2),1/(z3-z3^2),z3/(z3-z3^2)] >;


a := Matrix(L,3,[1,0,0, 0,z3,0, 0,0,z3^2]);
phiH:=Inverse(NumberingMap(H));
MR:=Parent(a);
LH:=[];
for i:=1 to #H do
  z:=MR!(phiH(i));
  if z[1,1] ne 0 then
    y:=1/phiH(i)[1,1]*z;
  elif z[1,2] ne 0 then
    y:=1/phiH(i)[1,2]*z;
  else
    y:=1/phiH(i)[1,3]*z;
  end if;
  if not y in LH then
    Append(~LH,y);
  end if;
end for;
#LH;
S:=CyclicSubgroups(H);
for T in S do
  G:=T`subgroup;
  phiG:=Inverse(NumberingMap(G));
  LG:=[];
  for i:=1 to #G do
    z:=MR!(phiG(i));
    if z[1,1] ne 0 then
      y:=1/phiG(i)[1,1]*z;
    elif z[1,2] ne 0 then
      y:=1/phiG(i)[1,2]*z;
    else
      y:=1/phiG(i)[1,3]*z;
    end if;
    if not y in LG then
      Append(~LG,y);
    end if;
  end for;
print #LH,#LG;
end for;
\end{verbatim}
\end{code}

            \item[(G)] The group of order 216 generated by
                \begin{equation*}
                    \left \langle \begin{pmatrix} 1&0&0\\0&\zeta_3&0\\0&0&\zeta_3^2 \end{pmatrix},
                    \begin{pmatrix} 0&1&0\\0&0&1\\1&0&0 \end{pmatrix},
                    \frac{1}{\zeta3-\zeta_3^2}\begin{pmatrix} 1&1&1\\1&\zeta_3&\zeta_3^2\\1&\zeta_3^2&\zeta_3 \end{pmatrix},
                    \begin{pmatrix} \eta&0&0\\0&\eta&0\\0&0&\eta \zeta_3 \end{pmatrix} \right \rangle,
                \end{equation*}
                where $\eta^3 = \zeta_3^2$.

%
\begin{code}
\begin{verbatim}
K<z3>:=CyclotomicField(3);
R<x>:=PolynomialRing(K);
f:=x^3-z3^2;
L<e>:=ext<K|f>;
R<x,y,z> := PolynomialRing(L,3);
X:=Matrix(R,3,1,[x,y,z]);

H := MatrixGroup< 3, L| [1,0,0, 0,z3,0, 0,0,z3^2],[0,1,0, 0,0,1, 1,0,0],[1/(z3-z3^2),1/(z3-z3^2),1/(z3-z3^2), 1/(z3-z3^2),z3/(z3-z3^2),z3^2/(z3-z3^2), 1/(z3-z3^2),z3^2/(z3-z3^2),z3/(z3-z3^2)], [e,0,0, 0,e,0, 0,0,e*z3] >;

a := Matrix(L,3,[e,0,0, 0,z3,0, 0,0,z3^2]);
phiH:=Inverse(NumberingMap(H));
MR:=Parent(a);
LH:=[];
for i:=1 to #H do
  z:=MR!(phiH(i));
  if z[1,1] ne 0 then
    y:=1/phiH(i)[1,1]*z;
  elif z[1,2] ne 0 then
    y:=1/phiH(i)[1,2]*z;
  else
    y:=1/phiH(i)[1,3]*z;
  end if;
  if not y in LH then
    Append(~LH,y);
  end if;
end for;
#LH;
S:=CyclicSubgroups(H);
for T in S do
  G:=T`subgroup;
  phiG:=Inverse(NumberingMap(G));
  LG:=[];
  for i:=1 to #G do
    z:=MR!(phiG(i));
    if z[1,1] ne 0 then
      y:=1/phiG(i)[1,1]*z;
    elif z[1,2] ne 0 then
      y:=1/phiG(i)[1,2]*z;
    else
      y:=1/phiG(i)[1,3]*z;
    end if;
    if not y in LG then
      Append(~LG,y);
    end if;
  end for;
print #LH,#LG;
end for;
\end{verbatim}
\end{code}

        \end{enumerate}
    \subsubsection{Sylow Subgroups}
        \begin{enumerate}
            \item[(H)] The group of order 60 generated by
                \[ \left \langle \begin{pmatrix} 0&1&0\\0&0&1\\1&0&0 \end{pmatrix},
                   \begin{pmatrix} 1&0&0\\0&-1&0\\0&0&-1 \end{pmatrix},
                   \begin{pmatrix} -1&\mu_2&\mu_1\\ \mu_2&\mu_1&-1\\ \mu_1&-1&\mu_2 \end{pmatrix} \right \rangle,\]
                where $\mu_1=\frac{1}{2}(-1+\sqrt{5})$ and $\mu_2 = \frac{1}{2}(-1-\sqrt{5})$.


\begin{code}
\begin{verbatim}
R<x>:=PolynomialRing(Rationals());
f:=x^2-5;
L<a>:=NumberField(f);
mu1:=1/2*(-1+a);
mu2:=1/2*(-1-a);
R<x,y,z> := PolynomialRing(L,3);
X:=Matrix(R,3,1,[x,y,z]);

H := MatrixGroup< 3, L| [0,1,0, 0,0,1, 1,0,0], [1,0,0, 0,-1,0, 0,0,-1], [-1/2,mu2/2,mu1/2, mu2/2,mu1/2,-1/2, mu1/2,-1/2,mu2/2] >;

F:=Matrix(R,3,1,[35/4*x^5 + 1/4*(-5*a + 65)*x^3*y^2 + 1/4*(5*a + 65)*x^3*z^2 + 1/8*(5*a + 65)*x*y^4 + 45/2*x*y^2*z^2 + 1/8*(-5*a + 65)*x*z^4,1/8*(-5*a + 65)*x^4*y + 1/4*(5*a + 65)*x^2*y^3 + 45/2*x^2*y*z^2 + 35/4*y^5 + 1/4*(-5*a + 65)*y^3*z^2 + 1/8*(5*a + 65)*y*z^4,1/8*(5*a + 65)*x^4*z + 45/2*x^2*y^2*z + 1/4*(-5*a + 65)*x^2*z^3 + 1/8*(-5*a+ 65)*y^4*z + 1/4*(5*a + 65)*y^2*z^3 + 35/4*z^5]);
GCD([F[1,1],F[2,1],F[3,1]]);

b := Matrix(L,3,[a,0,0, 0,1,0, 0,0,1]);
phiH:=Inverse(NumberingMap(H));
MR:=Parent(b);
LH:=[];
for i:=1 to #H do
  z:=MR!(phiH(i));
  if z[1,1] ne 0 then
    y:=1/phiH(i)[1,1]*z;
  elif z[1,2] ne 0 then
    y:=1/phiH(i)[1,2]*z;
  else
    y:=1/phiH(i)[1,3]*z;
  end if;
  if not y in LH then
    Append(~LH,y);
  end if;
end for;
#LH;
S:=CyclicSubgroups(H);
for T in S do
  G:=T`subgroup;
  phiG:=Inverse(NumberingMap(G));
  LG:=[];
  for i:=1 to #G do
    z:=MR!(phiG(i));
    if z[1,1] ne 0 then
      y:=1/phiG(i)[1,1]*z;
    elif z[1,2] ne 0 then
      y:=1/phiG(i)[1,2]*z;
    else
      y:=1/phiG(i)[1,3]*z;
    end if;
    if not y in LG then
      Append(~LG,y);
    end if;
  end for;
print #LH,#LG;
end for;
\end{verbatim}
\end{code}
%

            \item[(I)] The group of order 360 generated by
                \begin{equation*}
                    \left \langle \begin{pmatrix} 0&1&0\\0&0&1\\1&0&0 \end{pmatrix},
                   \begin{pmatrix} 1&0&0\\0&-1&0\\0&0&-1 \end{pmatrix},
                   \begin{pmatrix} -1&\mu_2&\mu_1\\ \mu_2&\mu_1&-1\\ \mu_1&-1&\mu_2 \end{pmatrix}, \begin{pmatrix} -1 &0&0 \\ 0&0&-\zeta_3  \\ 0&-\zeta_3^2&0 \end{pmatrix} \right \rangle,
                \end{equation*}
                where $\mu_1=\frac{1}{2}(-1+\sqrt{5})$ and $\mu_2 = \frac{1}{2}(-1-\sqrt{5})$.


\begin{code}
\begin{verbatim}
K<z3>:=CyclotomicField(3);
R<x>:=PolynomialRing(K);
f:=x^2-5;
L<a>:=ext<K|f>;
mu1:=1/2*(-1+a);
mu2:=1/2*(-1-a);
R<x,y,z> := PolynomialRing(L,3);
X:=Matrix(R,3,1,[x,y,z]);

H := MatrixGroup< 3, L| [0,1,0, 0,0,1, 1,0,0], [1,0,0, 0,-1,0, 0,0,-1], [-1/2,mu2/2,mu1/2, mu2/2,mu1/2,-1/2, mu1/2,-1/2,mu2/2], [-1,0,0, 0,0,-z3, 0,-z3^2,0] >;

T:=InvariantRing(H);
A:=InvariantsOfDegree(T,6)[1];
B:=InvariantsOfDegree(T,12)[1];

AssignNames(~T,["x","y","z"]);

Ax:=Derivative(A,1);
Ay:=Derivative(A,2);
Az:=Derivative(A,3);

Bx:=Derivative(B,1);
By:=Derivative(B,2);
Bz:=Derivative(B,3);


F:=Matrix(R,3,1,[Ay*Bz-Az*By,Az*Bx-Ax*Bz,Ax*By-Ay*Bx]);
GCD([F[1,1],F[2,1],F[3,1]]);

b := Matrix(L,3,[a,0,0, 0,1,0, 0,0,1]);
phiH:=Inverse(NumberingMap(H));
MR:=Parent(b);
LH:=[];
for i:=1 to #H do
  z:=MR!(phiH(i));
  if z[1,1] ne 0 then
    y:=1/phiH(i)[1,1]*z;
  elif z[1,2] ne 0 then
    y:=1/phiH(i)[1,2]*z;
  else
    y:=1/phiH(i)[1,3]*z;
  end if;
  if not y in LH then
    Append(~LH,y);
  end if;
end for;
#LH;
S:=CyclicSubgroups(H);
for T in S do
  G:=T`subgroup;
  phiG:=Inverse(NumberingMap(G));
  LG:=[];
  for i:=1 to #G do
    z:=MR!(phiG(i));
    if z[1,1] ne 0 then
      y:=1/phiG(i)[1,1]*z;
    elif z[1,2] ne 0 then
      y:=1/phiG(i)[1,2]*z;
    else
      y:=1/phiG(i)[1,3]*z;
    end if;
    if not y in LG then
      Append(~LG,y);
    end if;
  end for;
print #LH,#LG;
end for;
\end{verbatim}
\end{code}


            \item[(J)] The group of order 168 generated by
                \begin{equation*}
                    \left \langle \begin{pmatrix} 1&0&0\\0&\zeta_7&0\\ 0&0&\zeta_7^3\end{pmatrix},
                    \begin{pmatrix} 0&1&0\\0&0&1\\1&0&0 \end{pmatrix},
                    \begin{pmatrix} \zeta_7^4-\zeta_7^3&\zeta_7^2-\zeta_7^5&\zeta_7-\zeta_7^6\\
                    \zeta_7^2-\zeta_7^5&\zeta_7-\zeta_7^6&\zeta_7^4-\zeta_7^3\\
                    \zeta_7-\zeta_7^6&\zeta_7^4-\zeta_7^3&\zeta_7^2-\zeta_7^5 \end{pmatrix} \right \rangle.
                \end{equation*}


\begin{code}
\begin{verbatim}
L<z7>:=CyclotomicField(7);
h:=(z7+z7^2+z7^4-z7^6-z7^5-z7^3);
a:=z7^4-z7^3;
b:=z7^2-z7^5;
c:=z7-z7^6;
H := MatrixGroup< 3, L| [z7,0,0, 0,z7^2,0, 0,0,z7^4], [0,1,0, 0,0,1, 1,0,0], [a/h,b/h,c/h, b/h,c/h,a/h, c/h,a/h,b/h] >;

F:=7/2*Matrix(R,3,1,[94/7*x^7*z^2 - 436*x^5*y^3*z + 374*x^4*y*z^4 + 52*x^3*y^6 + 374*x^2*y^4*z^3 - 218*x*y^2*z^6 + 810/7*y^7*z^2 + 90/7*z^9,90/7*x^9 - 218*x^6*y*z^2 + 374*x^4*y^4*z + 374*x^3*y^2*z^4 + 94/7*x^2*y^7 + 810/7*x^2*z^7 - 436*x*y^5*z^3 + 52*y^3*z^6, 810/7*x^7*y^2 + 52*x^6*z^3 + 374*x^4*y^3*z^2 - 436*x^3*y*z^5 - 218*x^2*y^6*z + 374*x*y^4*z^4 + 90/7*y^9 + 94/7*y^2*z^7]);

b := Matrix(L,3,[z7,0,0, 0,1,0, 0,0,1]);
phiH:=Inverse(NumberingMap(H));
MR:=Parent(b);
LH:=[];
for i:=1 to #H do
  z:=MR!(phiH(i));
  if z[1,1] ne 0 then
    y:=1/phiH(i)[1,1]*z;
  elif z[1,2] ne 0 then
    y:=1/phiH(i)[1,2]*z;
  else
    y:=1/phiH(i)[1,3]*z;
  end if;
  if not y in LH then
    Append(~LH,y);
  end if;
end for;
#LH;
S:=CyclicSubgroups(H);
for T in S do
  G:=T`subgroup;
  phiG:=Inverse(NumberingMap(G));
  LG:=[];
  for i:=1 to #G do
    z:=MR!(phiG(i));
    if z[1,1] ne 0 then
      y:=1/phiG(i)[1,1]*z;
    elif z[1,2] ne 0 then
      y:=1/phiG(i)[1,2]*z;
    else
      y:=1/phiG(i)[1,3]*z;
    end if;
    if not y in LG then
      Append(~LG,y);
    end if;
  end for;
print #LH,#LG;
end for;
\end{verbatim}
\end{code}
        \end{enumerate}

\section{Bound of the order of the automorphism group in terms of degree for $\P^2$} \label{sect_bound}
    Since elements of the automorphism group must permute the points of period $n$ for each $n$, it is not hard to get an explicit (factorial) upper bound on the size of the automorphism group depending on the existence of periodic points. Levy has proven the stronger result of the existence of a bound on the size of the automorphism group depending on the dimension and degree of the map \cite{Levy}. In particular he shows that the largest abelian subgroup is bounded by $d^{N+1}$ and then uses the result of G.A.Miller which states that the size of a finite group is bounded in terms of its largest abelian subgroup.
    However, his proof does not result in an explicit constant. Levy's ideas can easily be used to deduce a quite strong bound in dimension one using the explicit description of the finite subgroups of $\PGL_2$ given by Silverman \cite{Silverman12}. In particular, the only two subgroups with arbitrarily larger order are the cyclic group and the dihedral group. Since it is possible to bound the order of the cyclic group in terms of the degree \cite{Silverman12} and the largest cyclic subgroup of the dihedral group has index two, it is not hard to produce the explicit bound of $\#\Aut \leq \max(60,2(d+1))$. In this section, we use the explicit description of the finite subgroups of $\PGL_3$ to produce an explicit bound on the size of the automorphism group depending on the degree.

    We first bound the index of the largest cyclic subgroup for the finite subgroups of $\PGL_3$.

    \begin{lem} \label{lem_index}
        We have the following upper bound on the index $m$ of the largest cyclic subgroup for the finite subgroups of $\PGL_3$ given in Section \ref{sect.P2}. We denote by $n$ the order of the cyclic subgroup.
        \begin{enumerate}
            \item[(A)] $m = 1$
            \item[(B)]
                \begin{enumerate}
                    \item[(B1)] $m \leq 2n$
                    \item[(B2)] $m \leq 12$
                    \item[(B3)] $m \leq 24$
                    \item[(B4)] $m \leq 60$
                \end{enumerate}
            \item[(C)] $m \leq 3n$.

            \item[(D)] $m \leq 6n$
            \item[(E)] $m \leq 9$
            \item[(F)] $m \leq 18$
            \item[(G)] $m \leq 36$
            \item[(H)] $m \leq 12$
            \item[(I)] $m \leq 72$
            \item[(J)] $m \leq 24$
        \end{enumerate}
        In particular, the index of the largest cyclic subgroup is at most $6n$.
    \end{lem}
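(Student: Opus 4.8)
The plan is to verify the bound family by family through the classification of Section \ref{sect.P2}, working throughout with the given $\SL_3$ generators rather than their images in $\PGL_3$, so that the cyclic order $n$ always includes the contribution of the central scalars. This is essential: it is the $\SL_3$ value of $n$ — which can strictly exceed the order of the corresponding cyclic subgroup in $\PGL_3$ — that makes the bound $6n$ hold, as the exceptional cases below will show.

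For the infinite families (A), (B1), (C), (D) I would exploit the common structure of an extension $1 \to T \to G \to W \to 1$, where $T$ is the subgroup of diagonal matrices in $G$ and $W \hookrightarrow S_3$ records the induced permutation of the coordinate axes. Because the diagonal torus of $\SL_3$ has rank two, $T \cong \Z/a \times \Z/b$ with $b \mid a$; its exponent is $n := a$, it contains a cyclic subgroup of order $n$, and $|T| = ab \le n^2$. Since the largest cyclic subgroup of $G$ has order at least $n$,
\begin{equation*}
m \;\le\; \frac{|G|}{n} \;=\; \frac{|W|\,|T|}{n} \;\le\; \frac{|W|\,n^2}{n} \;=\; |W|\,n.
\end{equation*}
Reading off $|W|$ in each case — trivial for (A), the reflection $C_2$ for (B1), the three-cycle $C_3$ for (C), and the full $S_3$ for (D) — gives the stated bounds $m = 1$, $m \le 2n$, $m \le 3n$, and $m \le 6n$ respectively.

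The remaining families (B2)--(B4) and (E)--(J) are groups of fixed order, so here the argument is a finite verification carried out directly from the explicit $\SL_3$ generators: enumerate the cyclic subgroups, select the largest, and divide into $|G|$. For (B2)--(B4) the relevant largest cyclic subgroup is essentially that of the binary tetrahedral, octahedral, or icosahedral group in the lower $2\times 2$ block (of orders $6$, $8$, $10$), and the count confirms $m \le 12,\,24,\,60$; each is comfortably below $6n$. The primitive groups (E)--(J) are then checked individually against their orders $36,\,72,\,216,\,60,\,360,\,168$.

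The hard part will be the large primitive groups, above all (I) of order $360$ and (G) of order $216$, where the $\PGL_3$ picture is actively misleading. In $\PGL_3$ the group (I) is $A_6$, whose largest cyclic subgroup has order only $5$; taken literally this would force $m = 72 > 30 = 6\cdot 5$ and appear to break the bound. The resolution is that the faithful three-dimensional representation is the Valentiner triple cover $3.A_6$ of order $1080$, in which a $5$-cycle lifts to an element of order $15$; hence $n = 15$ and $m = 1080/15 = 72 \le 90 = 6n$. An analogous enlargement by central scalars occurs for (G) and for the binary lifts underlying (B2)--(B4), and confirming it case by case is the real content of the lemma. Taking the maximum over all families — the worst coefficient being $|W| = 6$ in (D), and every primitive group meeting its own bound — yields the uniform conclusion $m \le 6n$.
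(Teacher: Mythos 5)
Your torus-extension argument for the infinite families is sound and is essentially the paper's own computation in structural clothing: the paper bounds the group orders by $2np$, $3n^2$, $6n^2$ for (B1), (C), (D) and divides by an explicit cyclic subgroup, exactly your $m \le |W|\,n$. But there is a concrete gap at (B2)--(B4): these are \emph{not} groups of fixed order. In the classification of Section \ref{sect.P2}, type (B) consists of block matrices with an arbitrary $\zeta_p$ in the upper-left corner, so (B2) has roughly $12p$ elements with $p$ unbounded, and similarly $24p$, $60p$ for (B3), (B4). The cyclic subgroup you propose to divide by --- the largest cyclic subgroup of the binary tetrahedral/octahedral/icosahedral block, of order $6$, $8$, $10$ --- gives index $12p/6 = 2p$, which is unbounded in $p$; your "finite verification" cannot produce the stated bounds $m \le 12, 24, 60$. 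The paper's proof instead divides by the cyclic subgroup containing the $\zeta_p$ factor, of order at least $p$, yielding $12p/p = 12$ uniformly in $p$ (and likewise $24$, $60$). The fix is one line, but as written your argument fails for large $p$.

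Your discussion of case (I) is a genuinely sharp catch, but it is not the paper's argument, and your repair does not mesh with how the lemma is used. The paper's proof simply exhibits a cyclic subgroup of order $5$ and computes $m = 360/5 = 72$; under the projective reading of $n$ the concluding clause "$m \le 6n$" does indeed fail at (I), since $72 > 30$, a defect the paper's proof passes over silently (the same reading makes (G) hold only with equality, $36 = 6\cdot 6$, as you note). Your resolution --- passing to the Valentiner cover $3.A_6$ of order $1080$, where $C_5$ lifts to the cyclic $C_{15}$, so $n = 15$ and $m = 1080/15 = 72 \le 90$ --- does make the clause true, but it redefines $n$ in a way that is incompatible with the downstream application: in the proof of Theorem \ref{thm_degree_bound}, Levy's bound $n \le d^3$ applies to cyclic subgroups of $\Aut(f) \subseteq \PGL_3$, i.e., to the projective order ($n = 5$ for the Valentiner group), not to the order of a lift in $\SL_3$. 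Under your convention the lemma no longer plugs into the theorem without inserting the comparison between the lifted and projective cyclic orders (a factor of at most $3$ from the center) and reworking the constant, or else handling the finitely many exceptional groups separately (e.g., $360 \le 6\cdot 2^6 = 384$ for any $d \ge 2$ forced by $C_5 \subseteq \Aut(f)$). In short: right diagnosis at (I), but the fix is neither the paper's route nor, as stated, sufficient to preserve the lemma's role in the paper.
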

    Note that these bounds are not necessarily optimal.

    \begin{proof}
        We compute the index using Lagrange's Theorem for finite subgroups.
        \begin{enumerate}
        \item[(A)] The whole group is cyclic so that $m=1$.
        \item[(B)] For all four of these groups, the group elements are of the form
            \begin{equation*}
                \begin{pmatrix}
                  \zeta_p &0&0\\
                  0 &a&b\\
                  0&c&d
                \end{pmatrix}
             \end{equation*}
             where the lower right 2x2 is one of the $\PGL_2$ subgroups and $\zeta_p$ is a primitive $p$-th root of unity.
            \begin{enumerate}
                \item[(B1)] If the dihedral part is order $2n$ and the upper left entry is a $p$-th root of unity, then the total order of the group is $2np$.
                    The dihedral group has a cyclic subgroup of order $n$. If $p\mid n$, then the total group has a cyclic subgroup of at most order $n$. Since in this case $p<n$ the total order is $\leq 2n^2$, the index is at most $\frac{2n^2}{n}=2n$. If $p > n$, then the cyclic group is order at least $p$ and the index is at most $\frac{2np}{p} = 2n$.

                \item[(B2)] We can have at most a total of $12p$ elements and at least a cyclic group of order $p$, so the index is at most $\frac{12p}{p} = 12$.
                \item[(B3)] We can have at most a total of $24p$ elements and at least a cyclic group of order $p$, so the index is at most $\frac{24p}{p} = 24$.
                \item[(B4)] We can have at most a total of $60p$ elements and at least a cyclic group of order $p$, so the index is at most $\frac{60p}{p} = 60$.
            \end{enumerate}

        \item[(C)] Let $A$ denote the generator of the cyclic part and $B$ the permutation. Then we have the relations
            \begin{align*}
                A^n &= id\\
                B^3 &= id\\
                BA &= ABAB^{-1}A^{-1}B.
            \end{align*}
            The last relation says we can always move the permutation to the right-hand side of the cyclic matrix. That means we have three non-zero entries which each can be one of the $n$ possible roots of unity and can be permuted in three different ways. Since we can always normalize one of those entries to be $1$, the largest possible order of the group is $3n^2$.
            There is a cyclic subgroup of order $n$, so we have the index of the largest cyclic subgroup to be at most $\frac{3n^2}{n} = 3n$.

        \item[(D)] Label the generators as $A,B,C$ in the order listed above. We produce an upper bound on the order of the group as follows. If we take the entries of $C$ to be $1$, then $B,C$ together generate the full permutation group on $3$ elements (order $6$). Then each element contains three non-zero entries that can be any of the $n$-th roots of unity and we can again normalize for a total of $n^2$ possibilities. These can then be permuted in any way for a total of $6n^2$ entries.
            There is a cyclic group of order $n$ so the index of the largest cyclic subgroup is at most $\frac{6n^2}{n} = 6n$.

        \item[(E)] There is a cyclic subgroup of order 4 which has index $\frac{36}{4}=9$.
        \item[(F)] There is a cyclic subgroup of order 4 which has index $\frac{72}{4}=18$.
        \item[(G)] There is a cyclic subgroup of order 6 which has index $\frac{216}{6}=36$.
        \item[(H)] There is a cyclic subgroup of order 5 which has index $\frac{60}{5}=12$.
        \item[(I)] There is a cyclic subgroup of order 5 which has index $\frac{360}{5}=72$.
        \item[(J)] There is a cyclic subgroup of order 7 which has index $\frac{168}{7}=24$.
    \end{enumerate}
    \end{proof}

    \begin{thm} \label{thm_degree_bound}
        Let $f:\P^2 \to \P^2$ be a morphism of degree $d \geq 2$. Then,
        \begin{equation*}
            \#\Aut(f) \leq 6d^6.
        \end{equation*}
    \end{thm}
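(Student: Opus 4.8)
The plan is to combine the classification of finite subgroups of $\PGL_3$ from Section~\ref{sect.P2} with the index bound of Lemma~\ref{lem_index} and Levy's bound on abelian subgroups \cite{Levy}. Since $\Aut(f)$ is a finite subgroup of $\PGL_3$, it is conjugate to one of the groups (A)--(J). I would first organize these into the families (A), (B1)--(B4), (C), (D), whose order grows without bound, and the remaining groups (E), (F), (G), (H), (I), (J), whose orders are the fixed constants $36, 72, 216, 60, 360, 168$. The whole argument then splits along this dichotomy.

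For the groups of bounded order the estimate is immediate. The largest such order is $360$ (type (I)), and since $d \ge 2$ we have
\begin{equation*}
    6d^6 \ge 6\cdot 2^6 = 384 > 360 \ge \#\Aut(f),
\end{equation*}
so the bound holds with room to spare.

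For the unbounded families, let $C \subseteq \Aut(f)$ be a largest cyclic subgroup and write $n = \#C$. Lemma~\ref{lem_index} gives $[\Aut(f):C] \le 6n$, whence
\begin{equation*}
    \#\Aut(f) = [\Aut(f):C]\cdot n \le 6n^2.
\end{equation*}
Because $C$ is abelian, Levy's bound on abelian subgroups \cite{Levy} yields $n \le d^{N+1} = d^3$ for $N=2$. Substituting, $\#\Aut(f) \le 6(d^3)^2 = 6d^6$, as required.

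The crux of the argument, and the step I expect to be the main obstacle, is the bound $n \le d^3$ on the order of the cyclic subgroup. If one prefers not to quote Levy's abelian bound, it can be approached directly: diagonalize a generator of $C$ as $\alpha = \operatorname{diag}(\zeta_n^{a}, \zeta_n^{b}, 1)$ and impose the equivariance $\alpha \circ \Phi = \mu\,(\Phi \circ \alpha)$ on the lift $\Phi = (f_0,f_1,f_2)$. Writing $f_l = \sum c^{(l)}_{ijk} x^i y^j z^k$, equivariance forces every monomial appearing in a fixed $f_l$ to lie in a single residue class of the weight $ai + bj \bmod n$; the differences of the exponent vectors occurring in the $f_l$ then span a sublattice of $\Z^2$ whose index controls $n$, and the requirement that $f_0, f_1, f_2$ have no common zero in $\P^2$ keeps this index, and hence $n$, bounded by a polynomial in $d$ for which Levy's $d^3$ is amply sufficient. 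A secondary point demanding care is that for the families (B2)--(B4) the obvious cyclic subgroup generated by the $\zeta_p$ factor has only constant index; one must instead take the genuinely largest cyclic subgroup, which combines $\zeta_p$ with a rotation of the planar part and is therefore large enough that the sharper bound $[\Aut(f):C] \le 6n$ of Lemma~\ref{lem_index} applies, so that the estimate $\#\Aut(f) \le 6n^2$ is retained uniformly across all the unbounded families.
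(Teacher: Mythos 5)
Your proof is correct and its skeleton is the same as the paper's: take a largest cyclic subgroup $C \subseteq \Aut(f)$ of order $n$, use Lemma~\ref{lem_index} to get $[\Aut(f):C] \le 6n$, hence $\#\Aut(f) \le 6n^2$, and finish with Levy's bound $n \le d^{N+1} = d^3$. The one real difference is your preliminary split into the unbounded families and the constant-order groups (E)--(J), with the latter dispatched by the direct comparison $360 \le 6\cdot 2^6 = 384$. The paper makes no such split: its proof applies the chain $\#\Aut(f) \le 6n^2 \le 6d^6$ uniformly to every finite subgroup of $\PGL_3$, relying on the closing assertion of Lemma~\ref{lem_index} that the index of the largest cyclic subgroup is always at most $6n$. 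That assertion is actually problematic for the group (I) of order $360$, whose largest cyclic subgroup has order $5$, so that its index is $72 > 6\cdot 5$ and the intermediate bound $6n^2 = 150$ is smaller than the group itself; the theorem survives there only because $360 < 6d^6$ for all $d \ge 2$, which is precisely the comparison your case split makes explicit. So your version is, if anything, slightly more careful than the paper's at this point. Two small remarks on your write-up: the sketched ``direct'' proof of $n \le d^3$ via weight classes and the exponent lattice is too vague to stand on its own (showing that the no-common-zero condition controls the index of that lattice is the substance of Levy's argument), but since you quote Levy's abelian-subgroup bound as the primary route, nothing rests on the sketch; and your observation about (B2)--(B4) is apt, since for (B4) with $d=2$ the constant-index subgroup generated by the $\zeta_p$ block alone would only give $\#\Aut(f) \le 60d^3 = 480 > 384$, so one genuinely needs the larger cyclic subgroup mixing $\zeta_p$ with the planar part, as you say.
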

    \begin{proof}
        By Levy \cite{Levy}, if $C_n \in \Aut(f)$, then 
        \begin{equation*}
            d^3 \geq n.
        \end{equation*}
        We also have by Lagrange's Theorem for finite subgroups that
        \begin{equation*}
            n = \frac{1}{[C_n:\Aut(f)]}\#\Aut(f).
        \end{equation*}
        Now applying Lemma \ref{lem_index}, we have
        \begin{equation*}
            n \geq \frac{\#\Aut(f)}{6n},
        \end{equation*}
        So that
        \begin{equation*}
            6n^2 \geq \#\Aut(f).
        \end{equation*}
        Finally, recalling that $n \leq d^3$ we have
        \begin{equation*}
            \#\Aut(f) \leq 6d^6.
        \end{equation*}
    \end{proof}
    This bound is unlikely to be optimal. The largest example of a cyclic subgroup (relative to degree) known to the authors is the following example\footnote{Thanks to Joseph Silverman for making the authors aware of this example for $d=2$}.
    \begin{exmp}
        Maps of the form
        \begin{align*}
            f:\P^2 &\to \P^2\\
            (x,y,z) &\mapsto (y^d,z^d,x^d)
        \end{align*}
        have a cyclic automorphism of order $n=d^2+d+1$. Note the exponent $2$ here instead of the exponent $3$ used from Levy's result in the proof of Theorem \ref{thm_degree_bound}. Essentially, we need to solve a system of linear congruences. Dehomogenizing $f$, we are working with the map
        \begin{equation*}
            F(x,y) = \left(\frac{y^d}{x^d},\frac{1}{x^d}\right).
        \end{equation*}
        If we are conjugating by $\alpha:(x,y) \to (\zeta_n^ax,\zeta_n^by)$ for some integers $a$ and $b$ and primitive $n$-th root of unity $\zeta_n$, to have $\alpha \in \Aut(f)$ we need to solve the congruences
        \begin{align*}
            db-(d+1)a \equiv 0 \pmod{n}\\
            -da-b \equiv 0 \pmod{n}.
        \end{align*}
        This results in
        \begin{equation*}
            d^2+d+1 \equiv 0 \pmod{n}.
        \end{equation*}
    \end{exmp}

\section{Algorithms for determining automorphism groups on $\P^N$} \label{sect.alg}
    Faber-Manes-Viray \cite{FMV} give several algorithms for $\P^1$ to determine the automorphism group of a given map. The goal of this section is to extend their algorithms to $\P^2$.

    \subsection{The Automorphism Scheme}
    We first make the obvious generalizations from $\P^1$ to $\P^N$ of the automorphism scheme and conjugation scheme defined in Faber-Manes-Viray \cite{FMV}. The method of proof remains the same with the details modified for dimension $N>1$.

    We need one helper lemma from Petsche-Szpiro-Tepper \cite{petsche}.
    \begin{defn}
        Let $f:\P^N \to \P^N$ be a morphisms of degree at least $1$. We say $f$ has good reduction over $K$ if there exists a choice of coordinates $\textbf{x}$ on $\P^N_K$ such that $f$ extends to an endomorphism of the associated integral model $\P^N_{K^{\circ}}$.

        Let $f:K^{N+1} \to K^{N+1}$ be a homogeneous map of degree $d \geq 1$. We say that $f$ has \emph{nonsingular reduction} over $K$ if $f$ is defined over $K^{\circ}$ and the reduced map over the residue field is nonsingular.
    \end{defn}

    \begin{lem}\cite[Lemma 6]{petsche}\label{lem_reduction}
        Let $K$ be a field with a nontrivial, nonarchimedean absolute value. Let $f:\P^N_K \to \P^N_k$ be a morphism of degree at least $2$. Let $f(x),g(y)$ be models of $f$ with respect to the coordinates $x,y$ where $\alpha(x) = y$ and $f(x) = \alpha^{-1} \circ g \circ \alpha$ for some $\alpha \in \GL_{N+1}(K)$. If both $f,g$ have nonsingular reduction, then $\alpha \in \GL_{N+1}(K^{\circ})$.
    \end{lem}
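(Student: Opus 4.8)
The plan is to reduce the statement to a single numerical identity among valuations, obtained by applying the Macaulay resultant $\Res$ to the conjugacy relation. Throughout I would normalize the homogeneous lifts $F=(F_0,\dots,F_N)$, $G=(G_0,\dots,G_N)$ of $f,g$ and the matrix $\alpha$ to be \emph{primitive}: all coefficients, resp.\ entries, in $K^{\circ}$ with at least one of absolute value $1$. With this normalization, nonsingular reduction of $f$ and $g$ is exactly $|\Res(F)|=|\Res(G)|=1$, while the desired conclusion $\alpha\in\GL_{N+1}(K^{\circ})$ is equivalent to $|\det\alpha|=1$, i.e.\ to $\tilde\alpha$ being invertible over the residue field $\tilde K$. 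The relation $f=\alpha^{-1}\circ g\circ\alpha$ lifts to a polynomial identity
\begin{equation*}
\alpha\circ F = c\,(G\circ\alpha),\qquad c\in K^{\times},
\end{equation*}
where $(\alpha\circ F)_i=\sum_j\alpha_{ij}F_j$ and $(G\circ\alpha)(x)=G(\alpha x)$.

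Next I would invoke the two standard transformation formulas, which follow from the specialization $\alpha=tI$ together with the fact that $A\mapsto\Res(A\circ F)/\Res(F)$ is a multiplicative polynomial character of $\GL_{N+1}$, hence a power of $\det$:
\begin{equation*}
\Res(\alpha\circ F)=(\det\alpha)^{d^{N}}\Res(F),\qquad \Res(G\circ\alpha)=(\det\alpha)^{d^{N+1}}\Res(G).
\end{equation*}
Applying $\Res$ to the displayed identity, using that scaling all $N+1$ forms by $c$ multiplies the resultant by $c^{(N+1)d^{N}}$, and writing $v=-\log|\cdot|$, I obtain
\begin{equation*}
d^{N}\,v(\det\alpha)+v(\Res F)=(N+1)d^{N}\,v(c)+d^{N+1}\,v(\det\alpha)+v(\Res G).
\end{equation*}
Since $v(\Res F)=v(\Res G)=0$, this collapses to $v(\det\alpha)\,(1-d)=(N+1)\,v(c)$. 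The essential point is that this is an identity of \emph{real valuations}, not a relation in $\tilde K$: one cannot simply reduce modulo $\mathfrak{m}$, because if $\tilde\alpha$ were singular both sides of the reduced resultant identity would vanish and carry no information.

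It then remains only to show $v(c)=0$, for then $d\geq 2$ forces $v(\det\alpha)=0$. I would prove that both $\alpha\circ F$ and $G\circ\alpha$ are themselves primitive, so that the exact identity $\alpha\circ F=c\,(G\circ\alpha)$ forces $|c|=1$. Their reductions are $\tilde\alpha\circ\tilde F$ and $\tilde G\circ\tilde\alpha$, and it suffices to see these are nonzero tuples of forms over $\tilde K$. Here the hypothesis enters in its sharp form: since $\tilde F$ is a morphism $\P^{N}_{\tilde K}\to\P^{N}_{\tilde K}$ of degree $d\geq1$ it is finite and surjective, so its image lies in no hyperplane and the components $\tilde F_0,\dots,\tilde F_N$ are linearly independent; as $\tilde\alpha\neq 0$ by primitivity, no row of $\tilde\alpha$ annihilates them, whence $\tilde\alpha\circ\tilde F\neq 0$. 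Dually, if $\tilde G\circ\tilde\alpha$ vanished identically then $\tilde G$ would vanish on $\operatorname{im}\tilde\alpha$; but $\tilde\alpha\neq0$ produces a nonzero $w\in\operatorname{im}\tilde\alpha$, and $\tilde G(w)\neq0$ because the morphism $\tilde G$ has no nontrivial common zero. Thus both contents vanish, $v(c)=0$, and $v(\det\alpha)=0$.

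I expect the crux, and the main obstacle, to be precisely the last paragraph: recognizing that the naive reduction of the resultant identity is vacuous when $\tilde\alpha$ is singular, so the computation must be run with valuations over $K$, and that "nonsingular reduction" must be used in the strong form "the reduced maps are morphisms, hence have linearly independent components and no common zeros" to force $v(c)=0$. Getting the exponents $d^{N}$ and $d^{N+1}$ right is equally load-bearing, since it is exactly the gap $d^{N+1}-d^{N}=d^{N}(d-1)\neq 0$ for $d\geq 2$ that drives the final implication.
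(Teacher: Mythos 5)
This lemma is imported by the paper from Petsche--Szpiro--Tepper \cite[Lemma 6]{petsche} without an in-paper proof, so there is no internal argument to compare against; judged on its own, your proof is correct, and it runs along the same resultant-theoretic lines as the source. Your two transformation exponents are right ($\Res(\alpha\circ F)=(\det\alpha)^{d^N}\Res(F)$ for post-composition, $\Res(G\circ\alpha)=(\det\alpha)^{d^{N+1}}\Res(G)$ for substitution, both checkable on $\alpha=tI$ and pinned down by the divisibility/character argument you sketch), and you correctly identify that nonsingular reduction of a model forces primitivity and $|\Res|=1$. You also correctly see why the content argument is indispensable: applying $\Res$ to the exact conjugacy alone yields only $|\det\alpha|=1$, which does not rule out matrices like $\operatorname{diag}(\pi,\pi^{-1})$, so one genuinely needs primitivity of both $\alpha\circ F$ and $G\circ\alpha$ to control the scalar $c$; your two reductions (linear independence of the components of $\tilde F$ because a degree-$d\geq 1$ morphism of $\P^N$ is finite and surjective, hence has image in no hyperplane --- equivalently, a vanishing combination would leave $N$ forms, which always share a projective zero; and $\tilde G(w)\neq 0$ for any nonzero $w$ in the image of $\tilde\alpha$) are both sound, including over finite residue fields since you evaluate at an explicit point rather than reduce a polynomial identity. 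The only step you gloss is the final bookkeeping: the lemma concerns the originally given $\alpha$ satisfying the exact identity $f=\alpha^{-1}\circ g\circ\alpha$, while you prove the statement for a primitive rescaling $\alpha_{\mathrm{prim}}=\mu\alpha$; but then $c=\mu^{1-d}$, so $v(c)=0$ forces $v(\mu)=0$ and the original $\alpha$ lies in $\GL_{N+1}(K^{\circ})$ as well --- worth one sentence, though it is immediate. Your closing observation that the whole argument must be run with valuations over $K$ rather than by reducing the resultant identity modulo the maximal ideal (which is vacuous when $\tilde\alpha$ is singular), and that the gap $d^{N+1}-d^N\neq 0$ is exactly where $d\geq 2$ enters, matches the actual mechanism of the cited proof.
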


    Let $R$ be a noetherian commutative ring with identity, and let $\Ralg$ and $\Grp$ denote the categories of commutative $R$-algebras and (abstract) groups,  respectively. For any $R$-algebra $S$, we identify $\PGL_{N+1}(S)$ with $\Aut(\P^N_S)$, the group of automorphisms of $\P^N$ defined over $S$.
    \begin{defn}
        Let $f:\P^N \to \P^N$ be a morphism of degree at least $2$. Let $\Aut_f$ denote the functor from $\Ralg$ to $\Grp$ given by
        \begin{equation*}
            S \mapsto \Aut(f)(S) = \{ \alpha \in \PGL_{N+1}(S) \col f=f^{\alpha}\}.
        \end{equation*}
        The functor acts on $R$-algebra morphisms by base extension of the associated group of automorphisms.
    \end{defn}

    \begin{thm}\label{thm_aut_scheme}
        Let $R$ be a noetherian commutative ring and let $f:\P^N_R \to \P^N_R$ be a morphism of degree at least $2$. Then the functor $\Aut_f$ is represented by a closed finite $R$-subgroup scheme $\Aut(f) \subset \PGL_{N+1}$.
    \end{thm}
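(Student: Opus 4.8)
The plan is to follow the one-dimensional argument of Faber--Manes--Viray \cite{FMV}, carrying out three steps in order: representability by a closed subscheme, the (formal) group structure, and finiteness, with the dimension-one reduction input replaced by Lemma \ref{lem_reduction}. For representability, recall that $\PGL_{N+1}$ is an affine group scheme over $R$. Writing $A = (a_{ij})$ for the universal matrix and $\Phi$ for the lift of $f$, the condition $f^{\alpha} = f$ is equivalent to requiring that the two $(N+1)$-tuples of degree-$d$ forms $A \cdot \Phi(\bar x)$ and $\Phi(A\bar x)$ be proportional. This proportionality is cut out by the simultaneous vanishing of the $2\times 2$ minors $(A\cdot\Phi)_i(\Phi\circ A)_j - (A\cdot\Phi)_j(\Phi\circ A)_i$, each coefficient of which (as a polynomial in $\bar x$) is a polynomial in the $a_{ij}$. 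These equations are bi-homogeneous in $\bar x$ and in $A$, hence scale-invariant, so they descend to cut out a closed subscheme of $\PGL_{N+1}$ representing $\Aut_f$; the check that the $S$-points match $\{\alpha : f^{\alpha} = f\}$ for every $R$-algebra $S$ is the same bookkeeping as in \cite{FMV}.

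That this closed subscheme is a subgroup scheme is formal: conjugation is an action with $(f^{\alpha})^{\beta} = f^{\beta\alpha}$, so the defining condition is stable under composition and inversion functorially, and the group operations of $\PGL_{N+1}$ restrict to it.

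The substantive step is finiteness, and here I would show that $\Aut(f) \to \Spec R$ is both affine and proper, whence finite by the equivalence ``affine $+$ proper $=$ finite.'' Affineness is immediate, since $\Aut(f)$ is closed in the affine $R$-scheme $\PGL_{N+1}$. For properness I would apply the valuative criterion; because $R$ is noetherian it suffices to test discrete valuation rings $V$ with fraction field $L$. Given $\Spec V \to \Spec R$ and an $\alpha \in \Aut(f)(L)$, I want $\alpha \in \PGL_{N+1}(V)$. The key point is that the base change $f_V$ has nonsingular reduction: since $f$ is a morphism over $R$ its Macaulay resultant $\Res(f)$ is a unit in $R$, and units map to units under $R \to V$, so $f_V$ is a morphism over $V$ with unit resultant. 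Rewriting $f^{\alpha} = f$ as $f = \alpha^{-1}\circ f \circ \alpha$ and taking $g = f$ in Lemma \ref{lem_reduction}, both models have nonsingular reduction, so the lemma forces $\alpha \in \GL_{N+1}(V)$ and hence $\alpha \in \PGL_{N+1}(V)$. Uniqueness of the lift is automatic from the separatedness of the affine scheme $\PGL_{N+1}$, so the valuative criterion holds and $\Aut(f)$ is proper, hence finite, over $R$.

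The main obstacle I anticipate is this finiteness step, specifically verifying cleanly that the good-reduction hypothesis of Lemma \ref{lem_reduction} survives base change to an arbitrary discrete valuation ring dominating $R$, and keeping straight the passage between $\GL_{N+1}$ and $\PGL_{N+1}$ when invoking the lemma. The representability bookkeeping is routine but must be handled with care so that the vanishing of the minors genuinely encodes equality of morphisms to $\P^N$ over every $R$-algebra, not merely over fields.
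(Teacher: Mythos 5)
Your proposal is correct, and it coincides with the paper's proof through its first two steps and through the heart of the third: the paper cuts out $\Aut_f$ inside the affine scheme $\PGL_{N+1} \subset \P^{N^2-1}_R$ by exactly your cross-minor equations $f_ig_j - f_jg_i = 0$ in $R[\alpha_{ij}]$, treats the group structure as formal, and proves properness by the same valuative criterion over discrete valuation rings, invoking Lemma \ref{lem_reduction} with $g = f$ (your remark that one must first lift the $L$-point of $\PGL_{N+1}$ to $\GL_{N+1}(L)$ is the right care to take, and is harmless over a field by Hilbert 90; likewise your observation that the unit Macaulay resultant stays a unit under $R \to V$ is the same content as the paper's reduction ``without loss of generality $R = o$''). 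Where you genuinely diverge is the concluding finiteness step. The paper first establishes quasi-finiteness by checking geometric fibers, citing the known fact that $\Aut_f(K)$ is finite for any algebraically closed field $K$, and then applies Zariski's main theorem: the quasi-finite proper morphism $\Aut_f \to \Spec{R}$ factors as an open immersion followed by a finite morphism, and properness forces the open immersion to be an isomorphism. You instead use that $\Aut_f$ is affine over $R$ (being closed in the affine scheme $\PGL_{N+1}$) and proper, and that affine plus universally closed implies integral, which together with finite type implies finite. Your route is valid and arguably cleaner: it dispenses with both the external input on finiteness of automorphism groups over fields and with Zariski's main theorem, and it yields fiberwise finiteness as a corollary rather than requiring it as an input; the paper's route, by contrast, makes the dynamical fact driving everything (finitely many automorphisms over any algebraically closed field) explicit in the argument, at the cost of the heavier ZMT machinery.
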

    \begin{rem}
        It is not flat \cite[Remark 2.2]{FMV}.
    \end{rem}
    \begin{proof}
        Fix a noetherian commutative ring $R$. Over $R$, $\PGL_{N+1}$ may be embedded as an affine subvariety of $\P^{N^2-1}_R = \Proj R[\alpha_{ij}]$, where $\alpha = (\alpha_{ij}) \in \PGL_{N+1}$. In particular, it is the complement of the determinant of $(\alpha_{ij})$. Let $f:\P^N_R \to \P^N_R$ be a nonconstant morphism. We may define $\Aut_f$ as subgroup scheme of $\PGL_{N+1}$ explicitly as follows. After fixing coordinates of $\P^{N}_R$ as $\bar{x}$. Let $f=[f_0,\ldots,f_N]$, where $f_i$ are homogeneous polynomials in the variables $\bar{x}$ of degree $d$ with coefficients in $R$ such that their Macaulay resultant is a unit in $R$. Similarly, for any $R$-algebra $S$, given an $\alpha \in \PGL_{N+1}(S)$ we may write
        \begin{equation*}
            f^{\alpha} = \alpha \circ f \circ \alpha^{-1}: \P^N_S \to \P^N_S
        \end{equation*}
        as homogeneous polynomials of degree $d$ in the variables $\bar{x}$ whose coefficients are homogeneous polynomials $[g_0,\ldots, g_N]$ in $R[\alpha_{ij}]$. For $\alpha \in \Aut(f)$, we must have
        \begin{equation*}
            f^{\alpha} = f,
        \end{equation*}
        which are homogeneous equations
        \begin{equation*}
            f_ig_j - f_jg_i = 0 \quad i,j \in \{0,\ldots,N\}.
        \end{equation*}
        Each such equation gives $\binom{d+N}{N}$ equations (one for each coefficient), and they define a closed subscheme of $\PGL_{N+1}$ defined over $R$. It is easy to see that $\Aut_f(S)$ is a subgroup of $\PGL_{N+1}(S)$ under composition for every $S$.

        Now we show finiteness when $d \geq 2$. We claim the map $\Aut_f \to \Spec{R}$ is quasi-finite. It suffices to check this for geometric fibers, and it is known that $\Aut_f(K)$ is a finite group for any algebraically closed field $K$ \cite[Prop 4.65]{ADS}. To see $\Aut_f$ is proper over $\Spec{R}$, we use the valuative criterion of properness. Let $o$ be a discrete valuation ring with field of fractions $k$ and consider the following diagram
        \begin{equation*}
            \xymatrix{
            \Spec{k} \ar[r] \ar[d] & \Aut_f \ar[d]\\
            \Spec{o} \ar[r] \ar@{-->}^{f}[ur] & \Spec{R}}.
        \end{equation*}
        The left vertical map is the canonical open immersion and the right vertical map is the structure morphism. We must show there is a unique $\phi:\Spec{o} \to \Aut_f$, making the diagram commute. Without loss of generality we may assume that $R = o$ and the bottom map is the identity. Since $f$ is defined over $o$ it has good reduction. We give $k$ the structure of a nonarchimedean field by defining for $x \in k$, $\abs{x} = e^{-v(x)}$, where $v$ is the canonical extension of the absolute value on $o$. Lemma \ref{lem_reduction} asserts that every $k$-automorphism of $f$ also has good reduction. Equivalently, every $k$-valued point may be extended to an $o$-valued point.

        We have shown that $\Aut_f \to \Spec{R}$ is a quasi-finite proper morphism. Zariski's main theorem tells us that it factors as an open immersion of $R$-schemes $\Aut_f \to X$ followed by a finite morphism $X \to \Spec{R}$. But $\Aut_f$ is proper, so any open immersion is actually an isomorphism. Hence, $\Aut_f$ is finite over $\Spec{R}$.
    \end{proof}

    \begin{defn}
        Let $f,g:\P^N_R \to \P^N_R$ be two morphisms of degree $d \geq 2$. Write $\Set$ for the category of sets. Let $\Conj_{f,g}:\Ralg \to \Set$ denote the functor given by
        \begin{equation*}
            \Conj_{f,g}(S) = \{ \alpha \in \PGL_{N+1}(S) \col f^{\alpha} = g\}.
        \end{equation*}
        The functor $\Conj_{f,g}$ acts on $R$-Algebra morphisms by base extension of the associated conjugation maps.
    \end{defn}
    As with the automorphism functor, we can easily generalize the results of Faber-Manes-Viray \cite{FMV}.
    \begin{thm}
        Let $R$ be a neotherian commutative ring with identity. Let $f,g:\P^N_R \to \P^N_R$ be two morphisms of degree $d \geq 2$. Then $\Conj_{f,g}$ is represented by a closed finite $R$-subscheme $\Conj_{f,g} \subset \PGL_{N+1}$.
    \end{thm}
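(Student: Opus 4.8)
The plan is to follow the proof of Theorem \ref{thm_aut_scheme} almost verbatim, since $\Conj_{f,g}$ is the natural torsor analogue of $\Aut_f$; the only essential adjustment is that $\Conj_{f,g}$ is a functor to $\Set$ rather than $\Grp$, so on geometric fibers it is a coset of $\Aut(f)$ rather than a group, and $\Conj_{f,g}(S)$ may be empty for some $S$.

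First I would realize $\Conj_{f,g}$ as a closed subscheme of $\PGL_{N+1}$. As before, embed $\PGL_{N+1}$ over $R$ as the affine open $\{\det(\alpha_{ij}) \neq 0\}$ in $\P^{N^2-1}_R = \Proj R[\alpha_{ij}]$. Fix models $f = [f_0,\ldots,f_N]$ and $g = [g_0,\ldots,g_N]$ with coefficients in $R$ and Macaulay resultants units in $R$. For $\alpha \in \PGL_{N+1}(S)$, the coordinates of $f^{\alpha} = \alpha \circ f \circ \alpha^{-1}$ are homogeneous polynomials $[h_0,\ldots,h_N]$ of degree $d$ in $\bar{x}$ whose coefficients lie in $R[\alpha_{ij}]$. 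The condition $f^{\alpha} = g$ in $\PGL_{N+1}$ becomes the vanishing of the minors $h_i g_j - h_j g_i = 0$ for all $i,j$, which, upon equating coefficients of each monomial in $\bar{x}$, gives finitely many homogeneous equations in the $\alpha_{ij}$ and hence a closed subscheme of $\PGL_{N+1}$ defined over $R$. That this closed subscheme represents $\Conj_{f,g}$ is immediate from the definition.

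Next I would establish finiteness over $\Spec{R}$ for $d \geq 2$ by the same two-step argument: quasi-finiteness plus properness, followed by Zariski's main theorem. For quasi-finiteness it suffices to bound the geometric fibers. Over an algebraically closed field $K$, if $\Conj_{f,g}(K)$ is nonempty, pick $\alpha_0$ with $f^{\alpha_0} = g$; then for any other $\alpha$ with $f^{\alpha} = g$ we have $f^{\alpha_0^{-1}\alpha} = f$, so $\alpha \in \alpha_0 \Aut(f)(K)$. Thus $\Conj_{f,g}(K)$ is either empty or a left coset of the finite group $\Aut(f)(K)$, hence finite. For properness I would apply the valuative criterion exactly as in Theorem \ref{thm_aut_scheme}: reduce to the case where $R = o$ is a discrete valuation ring with fraction field $k$, observe that $f$ and $g$ both have good (nonsingular) reduction since they are defined over $o$ with unit resultant, and invoke Lemma \ref{lem_reduction} to conclude that any $\alpha \in \PGL_{N+1}(k)$ with $f^{\alpha} = g$ in fact lies in $\PGL_{N+1}(o)$, i.e.\ every $k$-point extends uniquely to an $o$-point. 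Then Zariski's main theorem factors the quasi-finite morphism $\Conj_{f,g} \to \Spec{R}$ as an open immersion into a finite $R$-scheme, and properness forces the open immersion to be an isomorphism, giving finiteness.

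There is really no serious obstacle beyond bookkeeping, since all the genuine geometric input, namely the finiteness of $\Aut(f)(K)$ and the reduction statement of Lemma \ref{lem_reduction}, is already in hand. The one point that differs from the automorphism case, and the only place care is needed, is the fiber analysis: because $\Conj_{f,g}$ carries no distinguished identity point and may be empty, one must phrase quasi-finiteness through the coset description rather than via a subgroup. Fortunately Lemma \ref{lem_reduction} is stated precisely for a pair of models $f,g$ related by a conjugation, so it applies here without any modification.
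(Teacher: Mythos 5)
Your proposal is correct and takes essentially the same route as the paper, whose entire proof is the remark that one repeats the argument of Theorem \ref{thm_aut_scheme} with the defining equations coming from $f^{\alpha}=g$ and uses Lemma \ref{lem_reduction} (the paper calls it ``a simple generalization'' thereof) for properness. The details you supply---the closed-subscheme equations $h_i g_j - h_j g_i = 0$, the fiberwise coset description $\Conj_{f,g}(K) = \alpha_0 \Aut_f(K)$ giving quasi-finiteness, and the valuative-criterion plus Zariski's main theorem finish---are exactly the intended fleshing out, consistent with the paper's own remark that a nonempty $\Conj_{f,g}$ is a principal homogeneous space for $\Aut_f$.
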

    \begin{proof}
        Similar to the proof for $\Aut_f$ except that we define the equations with
        \begin{equation*}
            f^{\alpha} = g.
        \end{equation*}

        To get properness we use a simple generalization of Lemma \ref{lem_reduction}.
    \end{proof}
    \begin{rem}
        The group scheme $\PGL_{N+1}$ has relative dimension $(N+1)^2-1$ over $R$, while $\Rat_d^N$ has relative dimension $\binom{d+N+1}{N+1}$ over $R$. For $d\geq 2$ and $N \geq 1$, we have $\binom{d+N+1}{N+1} > (N+1)^2-1$, so for a fixed $f$, a general choice of $g$ will result in $\Conj_{f,g} = \emptyset$.
    \end{rem}
    \begin{rem}
        When $\Conj_{f,g}$ is nonempty it is the principal homogeneous space for $\Aut_f$.
    \end{rem}

\subsection{Method of Invariant Sets}
    In this section, we generalize the method of invariants sets from Faber-Manes-Viray \cite{FMV}. The idea is to find two sets $T_f$ and $T_g$ containing $N+2$ independent points such that
    \begin{equation*}
        \alpha(T_f) = T_g \quad \text{ for all } \alpha \in \Conj_{f,g}.
    \end{equation*}
    Then it is a matter of linear algebra to find the possible $\alpha$ sending one set to the other since
    \begin{equation*}
        \Conj_{f,g} \subseteq \Hom(T_f,T_g).
    \end{equation*}
    Of course, the larger the dimension $N$, the larger the number of possible ways to map $T_f$ to $T_g$, so the combinatorics quickly can become unmanageable in practice.

    Note that we can assume $\deg(f) = \deg(g)$ since otherwise $\Conj_{f,g} = \emptyset$. Because conjugation preserves dynamics, i.e., points of period $n$ go to points of period $n$, one way to construct invariant sets is to consider the periodic points of $f$ and $g$. Similarly for pre-images: for a fixed point $P \in \P^N$, we have the sets $f^{-n}(P)$ are in bijective correspondence with the set $g^{-n}(\alpha(P))$ for each $n \geq 1$. The number of points of period $n$ for a morphisms of $\P^N$ is
    \begin{equation*}
        \sum_{i=0}^N d^{in}
    \end{equation*}
    when counted with multiplicity. This gives two possible approaches to finding invariant sets:
    \begin{enumerate}
        \item Find an integer $m$, such that the set of $m$ periodic points $f$ has at least $N+2$ independent points.
        \item Find an integer $m$, such that the set of $m$-th preimages of the fixed points of $f$ has at least $N+2$ independent points.
    \end{enumerate}
    \begin{rem}
        Note that that these sets must have the same cardinality for $f$ and $g$, otherwise $\Conj_{f,g} = \emptyset$.
    \end{rem}

    The number of candidate $\alpha$ is approximately $\#T_f^{N+2}/((N+2)!) = O(d^{N(N+2)}/((N+2)!))$. So for $d$ or $N$ large, this quickly becomes infeasible. Even for small $d,N$, its primary implementation drawback is the need to work with large degree number field extensions to find all of the elements of $T_f$. Eliminating this issue is the purpose of the fixed point method in the next section.

\subsection{Method of Fixed Points}
    Note that, in some sense, the method of fixed points is a specialization of the method of invariant sets for $\Conj_{f,g}$ using that fact that $f=g$.

    Let $f: \P^2_K \rightarrow \P^2_K$ be a morphism such that $\deg(f) > 1$ and $K$ is a number field ($\Char(K) =0$). A key fact used in the algorithm is that $f$ permutes the fixed points of its automorphisms. Since $f$ is an endomorphism of $\P^2$, its automorphisms will be elements of $\PGL_3$ given as $3\times 3$ matrices. Thus, the fixed points can be determined as the eigenvectors of the automorphism's eigenspace.

    It is possible for a $3 \times 3$ matrix to have 1, 2, or 3 independent eigenvalues corresponding to 1, 2, or 3 fixed points of the automorphism. In particular, the case when we have less than $3$ linearly independent eigenvectors, these matrices are conjugate to some matrix in Jordan canonical form. The Jordan forms that are not diagonalizable are

    \begin{equation*}
        \begin{pmatrix} \lambda&1&0 \\ 0&\lambda&1 \\ 0&0&\lambda \end{pmatrix} ,\,
        \begin{pmatrix} \lambda_1&1&0 \\ 0&\lambda_1&0 \\ 0&0&\lambda \end{pmatrix},
    \end{equation*}

where $\lambda_i$ are the eigenvalues of the automorphism. The following two lemmas show that matrices in these two forms cannot have finite order when $\Char(K) = 0$, which means that there can be no automorphisms conjugate to these forms (due to the finiteness of $\Aut(f)$).

    \begin{lem}
        Let $\lambda$ be a constant and $n$ a positive integer. Then we have
        \begin{equation*}
            \begin{pmatrix} \lambda&1&0 \\ 0&\lambda&1 \\ 0&0&\lambda \end{pmatrix}^n =
            \begin{pmatrix} \lambda^n &n\lambda^{n-1}&\frac{n(n-1)}{2}\lambda^{n-2} \\ 0&\lambda^ n& n\lambda^{n-1} \\ 0&0&\lambda^n \end{pmatrix}.
        \end{equation*}
    \end{lem}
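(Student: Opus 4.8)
The plan is to decompose the Jordan block into its scalar and nilpotent parts and then invoke the binomial theorem, since those parts commute. Write
\begin{equation*}
J = \begin{pmatrix} \lambda&1&0 \\ 0&\lambda&1 \\ 0&0&\lambda \end{pmatrix} = \lambda I + N, \qquad N = \begin{pmatrix} 0&1&0 \\ 0&0&1 \\ 0&0&0 \end{pmatrix}.
\end{equation*}
First I would record the powers of the nilpotent matrix: a direct computation gives
\begin{equation*}
N^2 = \begin{pmatrix} 0&0&1 \\ 0&0&0 \\ 0&0&0 \end{pmatrix}, \qquad N^3 = 0,
\end{equation*}
so that $N^k = 0$ for all $k \geq 3$. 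The key structural point is that $\lambda I$ and $N$ commute (the scalar matrix commutes with everything), which is exactly what licenses the use of the binomial theorem for matrices.

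Next I would expand $J^n$ using the (matrix) binomial theorem and truncate the sum using $N^3 = 0$:
\begin{equation*}
J^n = (\lambda I + N)^n = \sum_{k=0}^{n}\binom{n}{k}\lambda^{n-k}N^k = \lambda^n I + n\lambda^{n-1}N + \binom{n}{2}\lambda^{n-2}N^2.
\end{equation*}
Substituting the explicit forms of $I$, $N$, and $N^2$ computed above, and recalling that $\binom{n}{2} = \frac{n(n-1)}{2}$, places $\lambda^n$ on the diagonal, $n\lambda^{n-1}$ on the first superdiagonal, and $\frac{n(n-1)}{2}\lambda^{n-2}$ in the top-right corner, which is precisely the claimed matrix.

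Since the expansion is finite and every coefficient is identified directly, there is essentially no genuine obstacle here; the statement is a bookkeeping computation. The only point requiring mild care is the convention at small $n$: for $n=1$ the term $\binom{n}{2}\lambda^{n-2}$ vanishes (as $\binom{1}{2}=0$), so the negative exponent $\lambda^{-1}$ never actually appears, and likewise the entries $n\lambda^{n-1}$ and $\frac{n(n-1)}{2}\lambda^{n-2}$ are to be read as the polynomials in $\lambda$ given by the binomial coefficients rather than as literal divisions. As an alternative to the binomial argument one could prove the identity by induction on $n$: the base case $n=1$ is immediate, and multiplying the asserted form for $J^n$ on the right by $J$ and simplifying the resulting superdiagonal entries (using $n\lambda^{n-1}\cdot\lambda + \lambda^n = (n+1)\lambda^n$ and $\frac{n(n-1)}{2}\lambda^{n-1} + n\lambda^{n-1} = \frac{(n+1)n}{2}\lambda^{n-1}$) reproduces the form for $J^{n+1}$. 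I expect to present the binomial version as the main proof, as it is the most transparent.
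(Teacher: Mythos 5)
Your proof is correct; the paper itself offers no argument beyond the words ``Simple calculation,'' and your decomposition $J = \lambda I + N$ with $N^3 = 0$ followed by the binomial theorem (valid since $\lambda I$ commutes with $N$) is exactly the standard computation that remark alludes to. Your attention to the small-$n$ convention and the alternative induction are fine but unnecessary embellishments; the binomial version alone suffices.
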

    \begin{proof}
        Simple calculation
    \end{proof}
    \begin{lem}
        Let $\lambda,\lambda_1$ be distinct constants and $n$ a positive integer. Then we have
        \begin{equation*}
            \begin{pmatrix} \lambda_1&1&0 \\ 0&\lambda_1&0 \\ 0&0&\lambda \end{pmatrix}^n =
            \begin{pmatrix} \lambda_1^n &n\lambda_1^{n-1}&0 \\ 0&\lambda_1^n& 0 \\ 0&0&\lambda^n \end{pmatrix}.
        \end{equation*}
    \end{lem}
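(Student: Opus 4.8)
The plan is to reduce the computation to a single $2\times 2$ Jordan block by exploiting the block-diagonal shape of the matrix. Writing $M$ for the matrix in the statement, we observe that it is the direct sum of the $2\times 2$ block $J=\begin{pmatrix}\lambda_1 & 1\\ 0 & \lambda_1\end{pmatrix}$ with the $1\times 1$ block $(\lambda)$. Since powers of a block-diagonal matrix are formed block by block, $M^n$ is the direct sum of $J^n$ and $(\lambda^n)$; the bottom-right entry $\lambda^n$ is then immediate, and everything comes down to identifying $J^n$.

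For $J^n$ I would write $J=\lambda_1 I + N$ with $N=\begin{pmatrix}0&1\\0&0\end{pmatrix}$, noting that $N^2=0$ and that the scalar matrix $\lambda_1 I$ commutes with $N$. The binomial theorem then gives
\begin{equation*}
J^n = (\lambda_1 I + N)^n = \sum_{k=0}^{n}\binom{n}{k}\lambda_1^{\,n-k}N^k = \lambda_1^n I + n\lambda_1^{\,n-1}N,
\end{equation*}
since every term with $k\ge 2$ vanishes. This is exactly $\begin{pmatrix}\lambda_1^n & n\lambda_1^{n-1}\\ 0 & \lambda_1^n\end{pmatrix}$, and reassembling the two blocks reproduces the claimed right-hand side. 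An equally clean alternative is a one-line induction on $n$: assuming the formula for $J^n$, multiply by $J$ on the right and use $\lambda_1^n\cdot 1 + n\lambda_1^{n-1}\cdot\lambda_1 = (n+1)\lambda_1^n$ to propagate the upper-right entry, together with the two diagonal entries advancing as $\lambda_1^n\cdot\lambda_1=\lambda_1^{n+1}$ and $\lambda^n\cdot\lambda=\lambda^{n+1}$.

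There is no genuine obstacle here—this is the \emph{simple calculation} flagged for the companion lemma—so the only point deserving a word of care is the truncation of the binomial expansion, i.e.\ recording that $N^k=0$ for all $k\ge 2$ so that only the $k=0$ and $k=1$ terms survive. The hypothesis that $\lambda_1$ and $\lambda$ be distinct plays no role in the arithmetic of this identity; it matters only downstream, where the lemma is invoked to certify that this Jordan form is genuinely non-diagonalizable and hence (in characteristic zero) of infinite order, ruling it out as the shape of an element of the finite group $\Aut(f)$.
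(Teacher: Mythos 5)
Your proof is correct, and since the paper's own proof is just the words ``Simple calculation,'' your binomial-theorem argument (with the block decomposition and the observation $N^2=0$) is precisely the calculation being elided. Nothing is missing, and your closing remark correctly identifies that the distinctness hypothesis is irrelevant to the identity itself and only matters in the subsequent finite-order argument.
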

    \begin{proof}
        Simple calculation.
    \end{proof}

    The remaining case to consider is where the automorphism has three linearly independent fixed points. The following proposition describes the form such an autmorphism must have.

	\begin{prop} \label{prop:auto_form}
        Let $f: \P^2 \rightarrow \P^2$ and $S \in \Aut(f)$ of order $n$. Denote $x,y,z$ as the three distinct fixed points of $S$. If $x,y,z$ are not all colinear, then there exists $U \in \PGL_3$ such that $U(x) = (1,0,0)$, $U(y) = (0,1,0)$, and $U(z) = (0,0,1)$ and $S \in \Aut(f)$ is given by
		\begin{equation*}
            S = U^{-1} \begin{pmatrix} \zeta_n^a&0&0\\0&\zeta_n^b&0 \\0&0&1 \end{pmatrix} U
        \end{equation*}
        where $\zeta_n$ is a $n$-th root of unity and $a,b \in \Z$ such that $gcd(a,n) = gcd(b,n) = 1$.
	\end{prop}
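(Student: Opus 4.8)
The plan is to exploit the fact that an automorphism with three distinct, non-collinear fixed points is automatically diagonalizable, and then to read off its normal form from a single change of coordinates built out of the fixed points themselves.

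First I would record the dictionary between the geometry of the fixed points and linear algebra. Three distinct points of $\P^2$ are non-collinear precisely when any chosen homogeneous representatives $v_x,v_y,v_z \in K^3$ are linearly independent. Lifting $S$ to a matrix $M \in \GL_3$, a fixed point of $S$ corresponds exactly to a point whose representative is an eigenvector of $M$. Thus the hypothesis supplies three linearly independent eigenvectors $v_x,v_y,v_z$, with eigenvalues $\lambda_x,\lambda_y,\lambda_z$, so $M$ is diagonalizable. (This is consistent with the two preceding lemmas, which rule out the non-diagonalizable Jordan forms for finite-order elements in characteristic zero; in the present case diagonalizability is already forced by the existence of three independent fixed points, so those lemmas are not even needed here.)

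Next I would define $U \in \PGL_3$ by declaring $U^{-1}$ to be the matrix whose columns are $v_x,v_y,v_z$. Then $U(x)=(1,0,0)$, $U(y)=(0,1,0)$, $U(z)=(0,0,1)$, and evaluating on the standard basis $e_1,e_2,e_3$ gives $U M U^{-1} = \operatorname{diag}(\lambda_x,\lambda_y,\lambda_z)$; that is, $U S U^{-1}$ is diagonal in $\PGL_3$. Since $M$ is invertible each $\lambda$ is nonzero, so I may rescale in $\PGL_3$ by $\lambda_z^{-1}$ to force the third diagonal entry to equal $1$, obtaining $U S U^{-1} = \operatorname{diag}(\lambda_x/\lambda_z,\,\lambda_y/\lambda_z,\,1)$.

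Finally, because $\Aut(f)$ is finite the element $S$ has finite order $n$, so the ratios $\lambda_x/\lambda_z$ and $\lambda_y/\lambda_z$ are roots of unity; writing them as powers $\zeta_n^a,\zeta_n^b$ of a primitive $n$-th root $\zeta_n$ yields the stated form. The delicate step, and the main obstacle, is pinning down the arithmetic conditions on the exponents. The requirement that $S$ have order exactly $n$ in $\PGL_3$ says that $\operatorname{diag}(\zeta_n^a,\zeta_n^b,1)$ becomes scalar only on raising to the $n$-th power, while distinctness of the three fixed points forces the three eigenvalues — hence $a$, $b$, and $a-b$ — to be nonzero modulo $n$. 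To reach the claimed coprimality $\gcd(a,n)=\gcd(b,n)=1$ one would try to use the freedom still available in the construction, namely which fixed point is sent to $(0,0,1)$ and which primitive root $\zeta_n$ is selected, to renormalize the exponents; checking that the labeling can genuinely be arranged to satisfy these coprimality constraints, rather than merely the weaker condition $\gcd(a,b,n)=1$ that already guarantees order $n$, is exactly the point that requires the most care.
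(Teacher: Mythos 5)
Your argument is essentially the paper's proof. Both proceed by conjugating $S$ with the coordinate change $U$ sending $x,y,z$ to the three coordinate points (equivalently, taking $U^{-1}$ to have the eigenvector representatives as columns), observing that $M = USU^{-1}$ fixes the coordinate points and is therefore diagonal, normalizing the third diagonal entry to $1$, and invoking finiteness of $\Aut(f)$ to conclude that the two remaining entries are roots of unity. Your side remark that three independent fixed points already force diagonalizability, so the two preceding Jordan-form lemmas are not needed in this case, is correct.

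The one place where you hesitate is precisely where the paper's own proof is silent, and your caution is warranted. The paper concludes only that the two ratios $a/c$ and $b/c$ are $n$-th roots of unity; it never derives the coprimality conditions $\gcd(a,n)=\gcd(b,n)=1$ appearing in the statement, and in fact they do not follow. Having order exactly $n$ in $\PGL_3$ forces only $\gcd(a,b,n)=1$, as you note. Moreover, the relabeling freedom you propose cannot repair this: the element $\operatorname{diag}(\zeta_6^2,\zeta_6^3,1)$ has order $6$, and the three possible normalizations (dividing by each eigenvalue in turn) give exponent pairs congruent to $(2,3)$, $(1,4)$, and $(5,3)$ modulo $6$, each containing an exponent not coprime to $6$; replacing $\zeta_6$ by the other primitive sixth root multiplies exponents by a unit modulo $6$ and so preserves the gcds. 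Thus the coprimality clause is an overstatement in the proposition itself, not a defect of your argument: your proof establishes exactly what the paper's proof establishes, namely the diagonal form with root-of-unity entries, and correctly identifies the unproved (indeed false, as stated) arithmetic refinement.
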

	
	\begin{proof}
		If $x \in \Fix(S)$, then $f(x) \in \Fix(S)$:
		\begin{equation*}
            f(S(x)) = S(f(x)) = S(x),
        \end{equation*}
        where the first equality is true because $S \in \Aut(f)$. Now let $S = U^{-1} M U$, where $M \in \PGL_3$. Multiplying on the left by $U$ gives us
		$$US = MU$$.
        Since $x,y,z \in \Fix(S)$, we get the following three equations for some constants $a,b,c$
		\begin{align*}
            \begin{pmatrix}a\\ 0 \\0 \end{pmatrix} &= USx = MUx = M\begin{pmatrix}1\\ 0 \\0 \end{pmatrix}\\
            \begin{pmatrix}0\\ b \\0 \end{pmatrix} &= USy = MUy = M\begin{pmatrix}0\\ 1 \\0 \end{pmatrix}\\
            \begin{pmatrix}0\\ 0 \\c \end{pmatrix} &= USz = MUz = M\begin{pmatrix}0\\ 0 \\1 \end{pmatrix}.
        \end{align*}
        Normalizing the matrix $M$ we have
		\begin{align*}
            M = \begin{pmatrix} a/c&0&0\\0& b/c&0 \\0&0&1 \end{pmatrix}.
        \end{align*}
        Since $\Aut(f)$ is a finite group, there exists an $n$ such that $S^n(x) = S(x)$, so both $\frac{a}{c}$ and $\frac{b}{c}$ are $n$-th roots of unity.
	\end{proof}

    In the first part of the proof for Proposition \ref{prop:auto_form}, we show that if $x \in \Fix(S)$ then $f(x) \in \Fix(S)$. In other words, we show that $f$ permutes the fixed points of its automorphisms. Therefore, there are seven different possible actions of $f$ on the fixed points of its automorphisms. We label the actions as $s_1,\ldots,s_7$. In the following cases, $x,y,z$ represent the three fixed points of the automorphism.
    \begin{enumerate} \label{sec:comb}
    	\item[$(s_1)$]: $f$ fixes $x,y,z$
        	\begin{equation*}
        		\xymatrix{
        		&\bullet \ar@(ul,ur)& \\
        		\bullet \ar@(ul,ur) && \bullet \ar@(ul,ur)\\}
        	\end{equation*}

    	\item[$(s_2)$]: $f$ permutes $x,y,z$
        	\begin{equation*}
        		\xymatrix{
        		&\bullet \ar@/^/ [rd]& \\
        		\bullet \ar@/^/[ru]&& \bullet \ar@/^/[ll]\\
        			}
        	\end{equation*}

        \item[$(s_3)$]: $f$ permutes $x,y$ with a fixed point $z$
        	\begin{equation*}
        		\xymatrix{
        		&\bullet \ar@/^/ [ld]& \\
        		\bullet \ar@/^/[ru]&& \bullet \ar@(ul,ur)\\
        			}
        	\end{equation*}

        \item[$(s_4)$]: $f$ permutes $x,y$ with a preperiodic point $z$
        	\begin{equation*}
        		\xymatrix{
        		&\bullet \ar@/^/ [ld]& \\
        		\bullet \ar@/^/[ru]&& \bullet \ar@/^/[ll]\\
        			}
        	\end{equation*}
    	
        \item[$(s_5)$]: $f$ fixes $x,y$ with a preperiodic point $z$
        	\begin{equation*}
        		\xymatrix{
        		&\bullet \ar@(ul,ur)& \\
        		\bullet \ar@(dl,dr) && \bullet \ar@/_/[lu]\\
        			}
        	\end{equation*}
\vspace*{5pt}
        \item[$(s_6)$]: $f$ fixes $x$ with two distinct preperiodic points $y,z$
        	\begin{equation*}
        		\xymatrix{
        		&\bullet \ar@/^/ [rd]& \\
        		\bullet \ar@/_/[rr]&& \bullet \ar@(dl,dr)\\
        			}
        	\end{equation*}	

    	\item[$(s_7)$]: $f$ fixes $x$ with $f(z)=y$ and $f(y)=x$
        	\begin{equation*}
        		\xymatrix{
        		&\bullet \ar@/_/ [ld]& \\
        		\bullet \ar@/_/[rr]&& \bullet \ar\ar@(ul,ur)\\
        			}
        	\end{equation*}
    \end{enumerate}

    Label the three fixed points of $S$ as $x = (x_0,x_1,x_2)$, $y = (y_0,y_1,y_2)$, and $z = (z_0,z_1,z_2)$. Then
    $S$ can be written in the form
    \begin{equation*}
        S = \left( C_1 | C_2 | C_3 \right),
    \end{equation*}
    where $C_1,C_2,C_3$ are the following column vectors.
    \begin{align}
        C_1 = \begin{pmatrix} (x_0y_2z_1 - x_0y_1z_2) \zeta_1 + (-x_2y_0z_1 + x_1y_0z_2) \zeta_2 + (x_2y_1z_0 - x_1y_2z_0)\label{eq:matrix} \\
        (x_1y_2z_1 - x_1y_1z_2) \zeta_1 + (-x_2y_1z_1 + x_1y_1z_2) \zeta_2 + (x_2y_1z_1 - x_1y_2z_1) \\
        (x_2y_2z_1 - x_2y_1z_2) \zeta_1 + (-x_2y_2z_1 + x_1y_2z_2) \zeta_2 + (x_2y_1z_2 - x_1y_2z_2) \\ \end{pmatrix}\\
        C_2 = \begin{pmatrix} (-x_0y_2z_0 + x_0y_0z_2) \zeta_1 + (x_2y_0z_0 - x_0y_0z_2) \zeta_2 + (-x_2y_0z_0 + x_0y_2z_0) \notag\\
        (-x_1y_2z_0 + x_1y_0z_2) \zeta_1 + (x_2y_1z_0 - x_0y_1z_2) \zeta_2 + (-x_2y_0z_1 + x_0y_2z_1)\\
        (-x_2y_2z_0 + x_2y_0z_2) \zeta_1 + (x_2y_2z_0 - x_0y_2z_2) \zeta_2 + (-x_2y_0z_2 + x_0y_2z_2)\\ \end{pmatrix}\\
        C_3 = \begin{pmatrix}    (x_0y_1z_0 - x_0y_0z_1) \zeta_1 + (-x_1y_0z_0 + x_0y_0z_1) \zeta_2 + (x_1y_0z_0 - x_0y_1z_0) \\
        (x_1y_1z_0 - x_1y_0z_1) \zeta_1 + (-x_1y_1z_0 + x_0y_1z_1) \zeta_2 + (x_1y_0z_1 - x_0y_1z_1) \\
        (x_2y_1z_0 - x_2y_0z_1) \zeta_1 + (-x_1y_2z_0 + x_0y_2z_1) \zeta_2 + (x_1y_0z_2 - x_0y_1z_2) \\ \end{pmatrix}, \notag
    \end{align}
    where $\zeta_1,\zeta_2$ are some power of a root of unity, not necessarily distinct. \\

    However, the points $x,y,z$ may be defined over an extension field. The next two lemmas determine which fields they can be defined over.

    \begin{lem}\label{sec:lem1}
        Let $S \in \Aut(f)$ with exactly two distinct fixed points. Let $z_1,z_2 \in \bar{K}$ be the fixed points of $S$. Fix $U \in \PGL_3(\bar{K})$ such that $U(z_1) = (1,0,0)$ and $U(z_2) = (0, 1, 0)$. Then
        \begin{equation*}
            S = U^{-1} \begin{pmatrix} \zeta_1&0&0\\0&\zeta_2&0\\0&0&1 \end{pmatrix} U
        \end{equation*}
        defines a nontrivial order $n$ element of $\PGL_3(K)$ if and only if $K(\zeta_1,\zeta_2) = K(z_1,z_2)$, where $\zeta_1, \zeta_2$ are some power of an $n$-th root of unity.
    \end{lem}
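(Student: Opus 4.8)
The plan is to run a Galois descent argument, using the standard fact that since $\PGL_3$ is an algebraic group over $K$, an element $S \in \PGL_3(\bar{K})$ lies in $\PGL_3(K)$ exactly when $\sigma(S) = S$ for every $\sigma \in \Gal(\bar{K}/K)$. First I would record the eigendata of $S$ explicitly. Taking $U^{-1} = (z_1 \mid z_2 \mid z_3)$ to be the matrix whose columns are (representatives of) the fixed points, where $z_3 = U^{-1}(0,0,1)$ is the fixed point carrying eigenvalue $1$, we may write
\begin{equation*}
S = (z_1 \mid z_2 \mid z_3)\,\operatorname{diag}(\zeta_1,\zeta_2,1)\,(z_1 \mid z_2 \mid z_3)^{-1},
\end{equation*}
so that $z_1,z_2,z_3$ are the eigenvectors and $\zeta_1,\zeta_2,1$ the matching eigenvalues. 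Applying a $\sigma \in \Gal(\bar{K}/K)$ entrywise yields a diagonalizable transformation whose eigenvectors are $\sigma(z_1),\sigma(z_2),\sigma(z_3)$ and whose eigenvalues are $\sigma(\zeta_1),\sigma(\zeta_2),1$, since $\sigma$ fixes the rational eigenvalue $1$.

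For the direction assuming $S \in \PGL_3(K)$, I would use that a diagonalizable element of $\PGL_3$ is determined by the pairing of its eigenvalue ratios with its eigenspaces. As $\sigma$ sends the $\zeta_i$-eigenspace of $S$ to the $\sigma(\zeta_i)$-eigenspace of $\sigma(S) = S$, the permutation $\sigma$ induces on $\{\zeta_1,\zeta_2\}$ coincides with the permutation it induces on the pair of fixed points $\{z_1,z_2\}$, while the rational eigenvalue $1$ forces its eigenvector $z_3$ to be $K$-rational. Hence $\sigma$ fixes $\zeta_1,\zeta_2$ individually if and only if it fixes $z_1,z_2$ individually, so $\Gal(\bar{K}/K(\zeta_1,\zeta_2)) = \Gal(\bar{K}/K(z_1,z_2))$ and therefore $K(\zeta_1,\zeta_2) = K(z_1,z_2)$. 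Conversely, the pair $\{z_1,z_2\}$ is Galois-stable (it is the fixed locus of $S$ apart from the rational point $z_3$, and these fixed points arise from the $K$-rational data of $f$); granting the field equality, every $\sigma$ permutes $\{z_1,z_2\}$ and $\{\zeta_1,\zeta_2\}$ by the identical permutation and fixes the rational data at $z_3$, so $\sigma(S)$ has precisely the eigenvalue-eigenspace pairing of $S$, forcing $\sigma(S)=S$ and $S \in \PGL_3(K)$.

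The step I expect to be the main obstacle is the careful bookkeeping of the Galois action on this decorated eigen-configuration. One must check that $\sigma$ permutes the two eigenvalues and the two fixed points by one and the same permutation—in particular handling the transposition case, where $\sigma$ simultaneously swaps $\zeta_1 \leftrightarrow \zeta_2$ and $z_1 \leftrightarrow z_2$—and that normalizing one eigenvalue to $1$ rigidifies the overall scaling so that no spurious common scalar spoils the comparison of $\sigma(\zeta_i)$ with $\zeta_{\pi(i)}$. Verifying that the eigenvalue-$1$ fixed point $z_3$ is forced to be $K$-rational, and hence contributes nothing to either field, is what makes the two fields agree on the nose rather than only after adjoining $z_3$; this, together with the Galois-stability of the fixed pair, is the crux that converts the descent criterion into the stated equality of fields.
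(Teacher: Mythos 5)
Your proposal runs on the same underlying mechanism as the paper's proof (Galois elements fix a rational $S$, hence permute its fixed points, and a permutation of the $z_i$ must be mirrored by a permutation of the $\zeta_i$), but it has a genuine gap exactly at the step you yourself flag as the crux: the claim that normalizing the third eigenvalue to $1$ ``rigidifies the overall scaling,'' so that $\sigma(S)=S$ in $\PGL_3$ forces the normalized lift $\tilde S$ to satisfy $\sigma(\tilde S)=\tilde S$, making $z_3$ $K$-rational and forcing the permutations of $\{\zeta_1,\zeta_2\}$ and $\{z_1,z_2\}$ to coincide. Equality in $\PGL_3$ only gives $\sigma(\tilde S)=c_\sigma\tilde S$ for some scalar $c_\sigma\in\bar K^{\times}$, and when $c_\sigma\neq 1$ the Galois action can permute \emph{all three} eigenpairs, moving $z_3$ off of rationality. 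This is not hypothetical: the class of the companion matrix of $x^3-2$ has order $3$ in $\PGL_3(\Q)$, with normalized eigenvalues $\zeta_3,\zeta_3^2,1$, yet its three fixed points $(\mu^2:\mu:1)$, $\mu^3=2$, are cyclically permuted by $\Gal(\Q(\sqrt[3]{2},\zeta_3)/\Q(\zeta_3))$ with $c_\sigma=\zeta_3^{\pm1}$; the eigenvector for the eigenvalue $1$ is irrational and $\Q(\zeta_3)\subsetneq\Q(z_1,z_2,z_3)$. So in a three-distinct-eigenvalue configuration your asserted rigidity is simply false. In the two-fixed-point configuration of this lemma it does hold, but for a reason you must supply: with exactly two distinct fixed points the eigenvalue multiset has the form $\{\zeta,1,1\}$ or $\{\zeta,\zeta,1\}$, and comparing $\{\sigma(\zeta),1,1\}$ with $\{c\zeta,c,c\}$ (resp.\ $\{\sigma(\zeta),\sigma(\zeta),1\}$ with $\{c\zeta,c\zeta,c\}$) forces $c_\sigma=1$ unless $S$ is trivial. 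Without this count your descent criterion yields nothing; with it, your forward direction goes through.

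Your converse direction has a second gap: you take the pair $\{z_1,z_2\}$ to be Galois-stable, with the permutation of the points matching the permutation of the roots of unity, ``because the fixed points arise from the $K$-rational data of $f$.'' But Galois-stability of the fixed locus of $S$ presupposes rationality of $S$ (or of its fixed locus), which is the conclusion being proved, and the bare field equality $K(\zeta_1,\zeta_2)=K(z_1,z_2)$ does not by itself correlate the two Galois actions: pairing $\zeta_1=\zeta_2=i$ with $z_1=(1:i:0)$, $z_2=(1:-i:0)$ satisfies the field equality while $U^{-1}\operatorname{diag}(i,i,1)U$ is visibly not in $\PGL_3(\Q)$. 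For comparison, the paper's own proof sidesteps both issues by proving only the containment actually used by the algorithm ($S$ rational implies $z_1,z_2\in K(\zeta_1,\zeta_2)$): it takes $\sigma$ fixing $F_1=K(\zeta_1,\zeta_2)$, notes such a $\sigma$ must swap the two fixed points if it is nontrivial on them, and reads off from the explicit entry formulas for $S$ that this forces $\sigma(\zeta_1)=\zeta_2$, a contradiction. To complete your version you would need to prove the scalar rigidity above and either add a hypothesis correlating the actions or, as the paper effectively does, restrict to the one implication the fixed-point algorithm requires.
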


    \begin{proof}
        Let $F_1 = K(\zeta_1,\zeta_2)$. As $\zeta_1$ and $\zeta_2$ are the eigenvalues of $S$, we know that they are the roots of its characteristic polynomial and, therefore, the field $F_1$ is Galois. Now take the Galois closure of the extension $F_2 = F_1(z_1,z_2)$ and call it $L$. Note that $F_1=F_2$ if and only if $\Gal(L/F_1)$ is the trivial group. We proceed to show this by contradiction. Our matrix $S$ is rational, so it is fixed by elements of $\Gal(L/F_1)$. Therefore, the elements of $\Gal(L/F_1)$ must permute the fixed points of $S$. Suppose that $\sigma \in \Gal(F_2/F_1)$ is a nontrivial element. Since there are only two fixed points of $S$, any nontrivial permutation must swap the two fixed points so that $\sigma(z_1) = z_2$. We first examine the $(1,1)$ entry of $S$ (from (\ref{eq:matrix})) where $z_1 = (x_0,x_1,x_2)$ and $z_2 = (y_0,y_1,y_2)$. The entry is
        \begin{equation*}
            (x_0y_2z_1 - x_0y_1z_2) \zeta_1 + (-x_2y_0z_1 + x_1y_0z_2) \zeta_2 + (x_2y_1z_0 - x_1y_2z_0),
        \end{equation*}
        which, after applying $\sigma$ to it, becomes
        \begin{equation*}
            (y_0x_2z_1 - y_0x_1z_2) \zeta_1 + (-y_2x_0z_1 + y_1x_0z_2) \zeta_2 + (y_2x_1z_0 - y_1x_2z_0).
        \end{equation*}
        We see that $\sigma(\zeta_1) = \zeta_2$. Examining the other $8$ entries in the matrix, we arrive at the same result so that $\sigma(\zeta_1) = \zeta_2$ for the entire matrix. Therefore, $\sigma$ does not fix the base field and cannot be in $\Gal(L/F_1)$.
    \end{proof}

    \begin{lem}\label{sec:lem2}
        Let $S \in \Aut(f)$ with three distinct fixed points. Let $z_1,z_2,z_3 \in \bar{K}$ be the fixed points of $S$. Fix $U \in \PGL_3(\bar{K})$ such that $U(z_1) = (1,0,0)$,  $U(z_2) = (0, 1, 0)$ and $U(z_3) = (0,0,1)$. Then
        \begin{equation*}
            S = U^{-1} \begin{pmatrix} \zeta_1&0&0\\0&\zeta_2&0\\0&0&1 \end{pmatrix} U
        \end{equation*}
        defines a nontrivial order $n$ element of $\PGL_3(K)$ if and only if $K(\zeta_1,\zeta_2) = K(z_1,z_2,z_3)$, where $\zeta_1, \zeta_2$ are some power of an $n$-th root of unity.
    \end{lem}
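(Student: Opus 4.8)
The plan is to adapt the proof of Lemma~\ref{sec:lem1} to the situation of three fixed points, where the permutation action of the Galois group now ranges over $S_3$ instead of $S_2$. Set $F_1 = K(\zeta_1,\zeta_2)$. Since $\zeta_1,\zeta_2,1$ are the eigenvalues of the $K$-rational matrix $S$, they are the roots of its characteristic polynomial, so $F_1/K$ is Galois. Let $F_2 = F_1(z_1,z_2,z_3)$ and let $L$ be its Galois closure over $F_1$; exactly as before, $F_1 = F_2$ if and only if $\Gal(L/F_1)$ is trivial, and the assertion will follow once this triviality is shown to coincide with the claimed field equality. Before starting I would record that the hypothesis of three distinct, isolated fixed points forces the eigenvalues $\zeta_1,\zeta_2,1$ to be pairwise distinct: a repeated eigenvalue would enlarge the corresponding eigenspace and produce a line of fixed points rather than isolated ones, and this distinctness is exactly what the argument below needs.

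First I would argue, as in Lemma~\ref{sec:lem1}, that every $\sigma \in \Gal(L/F_1)$ fixes the $K$-rational matrix $S$ and therefore permutes $\Fix(S) = \{z_1,z_2,z_3\}$; moreover, comparing $S(\sigma z_i) = \sigma(S z_i) = \sigma(\zeta_i)\,\sigma(z_i)$ shows that the permutation $\sigma$ induces on the $z_i$ must be matched by the same permutation on the attached eigenvalues $\{\zeta_1,\zeta_2,1\}$. The heart of the proof is then the case analysis over the induced element of $S_3$. A transposition of $z_1$ and $z_2$ forces $\sigma(\zeta_1) = \zeta_2$, reproducing the computation of the $(1,1)$ entry from~(\ref{eq:matrix}) carried out in Lemma~\ref{sec:lem1}, and since $\zeta_1 \neq \zeta_2$ this contradicts $\sigma|_{F_1} = \mathrm{id}$. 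Every remaining nontrivial permutation moves $z_3$, the eigenvector whose eigenvalue is $1$; such a permutation would force $\sigma(1) = \zeta_i$ for some $i$, which is impossible because any field automorphism fixes $1$ while $\zeta_i \neq 1$. This single observation disposes of the two transpositions through $z_3$ and both $3$-cycles at once, so $\Gal(L/F_1)$ is trivial, $F_2 = F_1$, and hence $K(z_1,z_2,z_3) \subseteq K(\zeta_1,\zeta_2)$.

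The reverse inclusion is immediate: each $\zeta_i$ is recovered as a ratio of coordinates of $S z_i = \zeta_i z_i$, so $\zeta_i$ lies in the field of definition of the fixed points, giving $K(\zeta_1,\zeta_2) \subseteq K(z_1,z_2,z_3)$ and therefore equality. For the converse direction of the biconditional I would run this compatibility backward: when $K(\zeta_1,\zeta_2) = K(z_1,z_2,z_3)$, the set $\{z_1,z_2,z_3\}$ of fixed points of an automorphism of the $K$-rational map $f$ is stable under $\Gal(\bar K/K)$, and the matching between the permutation of the fixed points and the permutation of the eigenvalues forces $\sigma(S) = S$ for every $\sigma$, so $S \in \PGL_3(K)$. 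I expect the main obstacle to be the bookkeeping in the case analysis—verifying directly from the explicit column formulas in~(\ref{eq:matrix}) that each admissible permutation of the $z_i$ acts on the $\zeta_i$ as claimed—together with making precise the role of the distinguished eigenvalue $1$ attached to $z_3$, which is what breaks the $S_3$ symmetry and rules out the $3$-cycles and the transpositions through $z_3$ that have no analogue in the two-point case.
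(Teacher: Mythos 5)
Your proof follows the paper's argument essentially verbatim: the same setup $F_1 = K(\zeta_1,\zeta_2)$ and $F_2 = F_1(z_1,z_2,z_3)$ with Galois closure $L$, and the same contradiction --- a nontrivial $\sigma \in \Gal(L/F_1)$ permutes $\Fix(S)$, and the entries of (\ref{eq:matrix}) force the matching permutation of $\{\zeta_1,\zeta_2,1\}$, so either $\sigma(\zeta_1)=\zeta_2$ or $\sigma(\zeta_1)=1$ (your observation that a permutation moving $z_3$ would require $\sigma(1)=\zeta_i$ is the same contradiction read from the other side), whence $\sigma$ moves $F_1$. Your two additions --- that the eigenvalues $\zeta_1,\zeta_2,1$ are pairwise distinct, and the reverse inclusion $K(\zeta_1,\zeta_2)\subseteq K(z_1,z_2,z_3)$ obtained from $S z_i = \zeta_i z_i$ --- are correct and are in fact needed for the stated \emph{equality} of fields, which the paper's proof leaves implicit (it only establishes $F_2=F_1$, i.e.\ one inclusion). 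One caution on the part you added beyond the paper: your sketch of the converse implication is circular, since from $K(\zeta_1,\zeta_2)=K(z_1,z_2,z_3)$ alone the \emph{set} $\{z_1,z_2,z_3\}$ need not be stable under $\Gal(\bar{K}/K)$ --- having coordinates in a Galois-stable field does not make a finite set of points Galois-stable, and stability of $\Fix(S)$ is exactly what rationality of $S$ would give you, which is what is to be proved; the paper's own proof, however, likewise establishes only the forward direction, so on everything the paper actually proves your argument is in full agreement with it.
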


\begin{proof}
    Let $F_1 = K(\zeta_1,\zeta_2)$ be the Galois field of the eigenvalues as in Lemma \ref{sec:lem1} and consider the Galois closure of the extension $F_2 = F_1(z_1,z_2,z_3)$, which we denote by $L$. We again show by contradiction that the only possible elements of $\Gal(L/F_1)$ are trivial, so that $F_1=F_2$.

    The matrix $S$ is rational, so it is fixed by any element of $\Gal(L/F_1)$. Therefore, the elements of $Gal(L/F_1)$ permute the fixed points. Suppose that $\sigma \in \Gal(K/L)$ is nontrivial, i.e., $\sigma(z_i) \in \{z_j,z_k\}$, where $i \ne j,k$. As in Lemma \ref{sec:lem1}, we examine how $\sigma$ acts on the matrix entries from (\ref{eq:matrix}). Without loss of generality there are two possibilities: $\sigma(z_1) = z_2$ or  $\sigma(z_1) = z_3$; therefore, $\sigma(\zeta_1) = \zeta_2$ or $\sigma(\zeta_1) = 1$. In particular, $\sigma$ does not fix the base field so cannot be in $\Gal(L/F_1)$.
\end{proof}

\subsection{Description of Algorithm}
    With the results of the previous section, we are now ready to describe the algorithm.

    Lemma \ref{sec:lem1} and Lemma \ref{sec:lem2} describe the general form of the automorphism. In particular, the possible candidates for automorphisms come from the appropriate roots of unity and the fixed points, 2-periodic points, and 3-periodic points of $f$ arranged in the cases $s_1,
    \ldots, s_7$ along with any needed pre-images.

    An upper bound on the order of the roots of unity is provided by the bound on the size of the automorphism group from Theorem \ref{thm_degree_bound}. Note that for $\P^1$, \cite{FMV} was able to use the stronger divisibility condition $n \mid d(d+1)(d-1)$ instead of just an upper bound on the size of the group. For morphisms of $\P^2$, this result is not true. For example, $C_5$, the cyclic group with $5$ elements can be the automorphism group for a functions with any degree $d \ge 5$. Since the fixed points of $S$ are the eigenvectors of the $S$, we can further limit the roots of unity to those contained in a degree $3$ extension of the base field.

    From Lemma \ref{sec:lem1} and Lemma \ref{sec:lem2}, these roots of unity determine (a finite number) of field extensions where the 1,2,3-perioidic points are rational. The algorithm computes these points and arranges them according to the combinatorics $s_1,\ldots, s_7$.

    Given the combinatorial cases, we loop through triples in each case and construct the candidate $S$ from Lemma \ref{sec:lem1} and Lemma \ref{sec:lem2}. We then test each candidate $S$ to see if it is an actual automorphism.

    The proofs of the previous section demonstrate that all elements of the automorphism group will be detected with this method. The bound on the size of the automorphism group and, hence, the roots of unity ensure that the algorithm will terminate after testing finitely many candidate automorphisms. Pseudocode for the algorithm is given in Algorithm \ref{fig:alg2}. The special case where the base field is $\Q$, where there are only two possible field extensions, is implemented in the Sage computer algebra system \cite{sage} and is described in Algorithm \ref{fig:alg1}.


\begin{algorithm}
	\caption{An algorithm to compute automorphism groups over $\P^2$ for number fields of characteristic 0 \label{fig:alg2}}
	\begin{algorithmic}
		\REQUIRE A morphism $f:\P^2 \rightarrow \P^2$ defined over $K$
		\ENSURE The automorphism group for $f$
		\FOR {j: 1 $\rightarrow 6\deg(f)^6$}
			\STATE Let $C(X) = X^j -1$
			\STATE Create a list of factors of C(X) for up to degree 3 extension of $K$
			\STATE Let $a,b$ be any positive integers such that $gcd(a,j)= gcd(b,j)=1$
			\STATE Compute the $3$-periodic points, $2$-perioidc points, fixed points over $K(\zeta_j^a, \zeta_j^b)$
			\FOR {i: 1 $\rightarrow$ 7}
				\STATE Construct $s_i$
				\FOR{ $(x,y,z) \in s_i$}
					\STATE Construct U s.t. $U(x)=(1,0,0)$ $U(y) = (0,1,0)$  and $U(z) = (0,0,1)$
					\STATE Set $S(x) = U^{-1} \begin{pmatrix} \zeta_j^a&0&0\\0&\zeta_j^b&0 \\0&0&1  \end{pmatrix} U$
					\STATE Test if $S \in \Aut(f)$
				\ENDFOR
			\ENDFOR
		\ENDFOR
	\end{algorithmic}
\end{algorithm}


\begin{algorithm}
	\caption{An algorithm to compute automorphism groups over $\P^2$ for $\Q$ \label{fig:alg1}}
	\begin{algorithmic}
		\REQUIRE A morphism $f:\P^2 \rightarrow \P^2$ of degree $\ge 2$ defined over $\Q$
		\ENSURE The automorphism group for $f$
		\STATE Compute the $3$-periodic points of $f$ over $F$, $F(\zeta_4)$ and $F(\zeta_6)$
		\STATE Compute the $2$-periodic points of $f$ over $F$, $F(\zeta_4)$ and $F(\zeta_6)$
		\STATE Compute the fixed points of $f$ over $F$, $F(\zeta_4)$ and $F(\zeta_6)$
		\STATE Construct the sets $s_1 \rightarrow s_2$ over $F$, $F(\zeta_4)$ and $F(\zeta_6)$
		\FOR {i: 1 $\rightarrow$ 7}
			\FOR{ $(x,y,z) \in s_i$ over $F$ }
				\STATE Construct U s.t. $U(x)=(1,0,0)$ $U(y) = (0,1,0)$  and $U(z) = (0,0,1)$
				\STATE Let $a,b$ be any positive integers such that$gcd(a,n)= gcd(b,n)=1$
				\STATE Set $S(x) = U^{-1} \begin{pmatrix} (-1)^a&0&0\\0&(-1)^b&0 \\0&0&1  \end{pmatrix} U$
				\STATE Test if $S \in \Aut(f)$
			\ENDFOR
			\FOR{ $(x,y,z) \in s_i$ over $F(\zeta_4)$ }
				\STATE Construct U s.t. $U(x)=(1,0,0)$ $U(y) = (0,1,0)$  and $U(z) = (0,0,1)$
				\STATE Let $a,b$ be any positive integers such that$gcd(a,n)= gcd(b,n)=1$
				\STATE Set $S(x) = U^{-1} \begin{pmatrix} (i)^a&0&0\\0&(i)^b&0 \\0&0&1  \end{pmatrix} U$
				\STATE Test if $S \in \Aut(f)$
			\ENDFOR
			\FOR{ $(x,y,z) \in s_i$ over $F(\zeta_6)$ }
				\STATE Construct U s.t. $U(x)=(1,0,0)$ $U(y) = (0,1,0)$  and $U(z) = (0,0,1)$
				\STATE Let $a,b$ be any integers such that$gcd(a,n)= gcd(b,n)=1$
				\STATE Set $S(x) = U^{-1} \begin{pmatrix} (\frac{1 + i\sqrt{3}}{2})^a&0&0\\0&(\frac{1 + i\sqrt{3}}{2})^b&0 \\0&0&1  \end{pmatrix} U$
				\STATE Test if $S \in \Aut(f)$
			\ENDFOR
		\ENDFOR
	\end{algorithmic}
\end{algorithm}

\section{Algorithm Examples} \label{sect.alg.exmp}

\subsection{Examples on $\P^2$ using invariant theory} \label{sect.exmp_dim2}
    \begin{exmp}
        As in dimension 1 (Example \ref{exmp.octahedral}), we give an example of using invariant theory to find endomorphisms of $\P^2$ with nontrivial stabilizer group. This example uses the equivariant Molien series and Reynolds operator.

        Consider the group of order 216 generated by
        \begin{equation*}
            \left\langle \begin{pmatrix} 1&0&0\\0&\zeta_3&0\\0&0&\zeta_3^2 \end{pmatrix},
            \begin{pmatrix} 0&1&0\\0&0&1\\1&0&0 \end{pmatrix},
            \frac{1}{\zeta3-\zeta_3^2}
            \begin{pmatrix} 1&1&1\\1&\zeta_3&\zeta_3^2\\1&\zeta_3^2&\zeta_3 \end{pmatrix},
            \begin{pmatrix} \eta&0&0\\0&\eta&0\\0&0& \eta \zeta_3 \end{pmatrix}\right \rangle 
        \end{equation*}
        where $\eta^3 = \zeta_3^2$.

        We find three distinct classes of linear characters and compute the corresponding equivariant Molien series.
        \begin{align*}
            \chi_1&: t + 2t^{10} + t^{13} + t^{16} + 6t^{19} + 2t^{22} + 4t^{25} + 9t^{28} + 6t^{31} + O(t^{32})\\
            \chi_2&: 2t^7 + t^{13} + 4t^{16} + 2t^{19} + 2t^{22} + 9t^{25} + 4t^{28} + 6t^{31} + O(t^{32})\\
            \chi_3&: t^4 + 4t^{13} + t^{16} + 2t^{19} + 6t^{22} + 4t^{25} + 4t^{28} + 12t^{31} + O(t^{32}).
        \end{align*}
        From these we know the degrees of the maps with $\Gamma \subseteq \mathcal{A}_{f}$ and can search with the equivariant Reynolds operator.
        For example, in lowest degree for each character we find the maps:
        \begin{align*}
            \chi_1 &\colon [-2x^7y^3 + 2x^7z^3 - 7x^4y^6 + 7x^4z^6 + xy^9 - 14xy^6z^3 + 14xy^3z^6 - xz^9\\
            &\qquad -x^9y + 7x^6y^4 + 14x^6yz^3 + 2x^3y^7 - 14x^3yz^6 - 2y^7z^3 - 7y^4z^6 + yz^9\\
            &\qquad x^9z - 14x^6y^3z - 7x^6z^4 + 14x^3y^6z - 2x^3z^7 - y^9z + 7y^6z^4 + 2y^3z^7].\\
            \chi_2 &\colon [ 8x^7 - 35x^4y^3 - 35x^4z^3 - 7xy^6 - 140xy^3z^3 - 7xz^6\\
            &\qquad -7x^6y - 35x^3y^4 - 140x^3yz^3 + 8y^7 - 35y^4z^3 - 7yz^6\\
            &\qquad-7x^6z - 140x^3y^3z - 35x^3z^4 - 7y^6z - 35y^3z^4 + 8z^7].\\
            \chi_3 &\colon [ xy^3 - xz^3, -x^3y + yz^3, x^3z - y^3z].
        \end{align*}

\begin{code}
{\footnotesize \begin{verbatim}
K<z3>:=CyclotomicField(3);
R<x>:=PolynomialRing(K);
f:=x^3-z3^2;
L<e>:=ext<K|f>;
R<x,y,z> := PolynomialRing(L,3);
X:=Matrix(R,3,1,[x,y,z]);

H := MatrixGroup< 3, L| [1,0,0, 0,z3,0, 0,0,z3^2],[0,1,0, 0,0,1, 1,0,0],[1/(z3-z3^2),1/(z3-z3^2),1/(z3-z3^2), 1/(z3-z3^2),z3/(z3-z3^2),z3^2/(z3-z3^2), 1/(z3-z3^2),z3^2/(z3-z3^2),z3/(z3-z3^2)], [e,0,0, 0,e,0, 0,0,e*z3] >;


//Compute the equivariant Molien series.
//Only CT[1], CT[2], CT[3] are linear.

CT:=CharacterTable(H);
xi:=CT[1];
S:=Inverse(NumberingMap(H));
R<t>:=PolynomialRing(L);
EMol:=0;
for i:=1 to Order(H) do
  EMol := EMol + (R!L!K!xi(S(i)))*Trace(Matrix(R,S(i)^(-1)))/(Determinant(DiagonalMatrix([1,1,1])- t*Matrix(R,S(i))));
end for;
EMol:=EMol/Order(H);
PS<t>:=PowerSeriesRing(L);
PS!EMol;

//Apply the equivariant Reynolds operator
R<x,y,z>:=PolynomialRing(L,3);
X:=Matrix(R,3,1,[x,y,z]);
M:=MonomialsOfDegree(R,4);
Auts:={};

for xxi:=1 to #M do
  for yi:=1 to #M do
    for zi:=1 to #M do
        T:=Matrix(3,1,[0,0,0]);
        F:=Matrix(R,3,1,[M[xxi],M[yi],M[zi]]);
      for i:= 1 to #H do
        t:=Matrix(R,S(i))*X;
        G:=Matrix(R,3,1,[Evaluate(F[1,1],[t[1,1],t[2,1],t[3,1]]),Evaluate(F[2,1],[t[1,1],t[2,1],t[3,1]]),Evaluate(F[3,1],[t[1,1],t[2,1],t[3,1]])]);
        C:=Matrix(R,S(i))^(-1)*G;
        T:=T+R!L!K!xi(S(i))*C;
      end for;
      g:=GCD([T[1,1],T[2,1],T[3,1]]);
     if g ne 0 then
        T[1,1]:=T[1,1]/g;
        T[2,1]:=T[2,1]/g;
        T[3,1]:=T[3,1]/g;
      end if;
      Auts:=Include(Auts, T);
    end for;
  end for;
end for;
Auts;
\end{verbatim}}
\end{code}
    \end{exmp}

\begin{exmp}
The group of order 60 generated by
    \[ \left \langle \begin{pmatrix} 0&1&0\\0&0&1\\1&0&0 \end{pmatrix},
       \begin{pmatrix} 1&0&0\\0&-1&0\\0&0&-1 \end{pmatrix},
       \begin{pmatrix} -1&\mu_2&\mu_1\\ \mu_2&\mu_1&-1\\ \mu_1&-1&\mu_2 \end{pmatrix} \right \rangle,\]
    where $\mu_1=\frac{1}{2}(-1+\sqrt{5})$ and $\mu_2 = \frac{1}{2}(-1-\sqrt{5})$.

    In this example, we use invariant polynomials to construct the invariant 2-form. For the group (H), we compute the three fundamental invariants $F_2,F_6,F_{10}$, which are degrees $2,6,10$ respectively.
    \begin{align*}
        F_2&=x^2 + y^2 + z^2\\
        F_6&=x^6 + 1/14(-3\sqrt{5} + 39)x^4y^2 + 1/14(3\sqrt{5} + 39)x^4z^2 + 1/14(3\sqrt{5} + 39)x^2y^4 + 54/7x^2y^2z^2\\
        &+ 1/14(-3\sqrt{5} + 39)x^2z^4 + y^6 + 1/14(-3\sqrt{5} + 39)y^4z^2 +1/14(3\sqrt{5} + 39)y^2z^4 + z^6\\
        F_{10}&=x^{10} + 1/38(-27\sqrt{5} + 153)x^8y^2 + 1/38(27\sqrt{5} + 153)x^8z^2 + 1/19(-21\sqrt{5} + 147)x^6y^4 + 504/19x^6y^2z^2\\
        &+ 1/19(21\sqrt{5} + 147)x^6z^4 + 1/19(21\sqrt{5} +147)x^4y^6 + 630/19x^4y^4z^2 + 630/19x^4y^2z^4\\
        &+ 1/19(-21\sqrt{5} + 147)x^4z^6 + 1/38(27\sqrt{5} + 153)x^2y^8 + 504/19x^2y^6z^2 + 630/19x^2y^4z^4 + 504/19x^2y^2z^6\\
        &+ 1/38(-27\sqrt{5} + 153)x^2z^8 + y^{10} + 1/38(-27\sqrt{5} + 153)y^8z^2 + 1/19(-21\sqrt{5} + 147)y^6z^4\\
        &+ 1/19(21\sqrt{5} + 147)y^4z^6 + 1/38(27\sqrt{5} + 153)y^2z^8 + z^{10}
    \end{align*}
    We then compute the invariant $2$-form $dF_2 \wedge dF_6$ as $\nabla F_2 \times \nabla F_6$ to get the map
{\small
    \begin{align*}
        f&=[12 \sqrt{5} x^{4} y z + \left(-12 \sqrt{5} + 60\right) x^{2} y^{3} z + \left(-6 \sqrt{5} -
        6\right) y^{5} z + \left(-12 \sqrt{5} - 60\right) x^{2} y z^{3} + 24 \sqrt{5} y^{3}
        z^{3} + \left(-6 \sqrt{5} + 6\right) y z^{5}\\
        &: \left(-6 \sqrt{5} + 6\right) x^{5} z + \left(-12 \sqrt{5} - 60\right) x^{3} y^{2} z +
        12 \sqrt{5} x y^{4} z + 24 \sqrt{5} x^{3} z^{3} + \left(-12 \sqrt{5} + 60\right) x y^{2}
        z^{3} + \left(-6 \sqrt{5} - 6\right) x z^{5}\\
        &: \left(-6 \sqrt{5} - 6\right) x^{5} y + 24 \sqrt{5} x^{3} y^{3} + \left(-6 \sqrt{5} + 6\right)
        x y^{5} + \left(-12 \sqrt{5} + 60\right) x^{3} y z^{2} + \left(-12 \sqrt{5} -
        60\right) x y^{3} z^{2} + 12 \sqrt{5} x y z^{4}].
    \end{align*}
}
    However, notice that this method fails to construct the equivariant of degree $5$ which we expect from the exterior Molien series:
    \begin{equation*}
        s^3 + (t + t^5 + t^6 + t^9 + t^{10} + t^{14} - t^{16} + O(t^{20}))s^2 + (t + t^5 + t^6 + t^9 + t^{10} + t^{14} - t^{16} + O(t^{20}))s + 1
    \end{equation*}
    (after dividing out by the Molien series). Using the equivariant Reynolds operator, we can find one of degree $5$.
    \begin{align*}
        f=&[70x^5 + (-10\sqrt{5} + 130)x^3y^2 + (5\sqrt{5} + 65)xy^4 + (10\sqrt{5} +130)x^3z^2 + 180xy^2z^2 + (-5\sqrt{5} + 65)xz^4\\
        &: (-5\sqrt{5} + 65)x^4y +(10\sqrt{5} + 130)x^2y^3 + 70y^5 + 180x^2yz^2 + (-10\sqrt{5} + 130)y^3z^2 +(5\sqrt{5} + 65)yz^4\\
        &: (5\sqrt{5} + 65)x^4z + 180x^2y^2z + (-5\sqrt{5} + 65)y^4z+ (-10\sqrt{5} + 130)x^2z^3 + (10\sqrt{5} + 130)y^2z^3 + 70z^5)]
    \end{align*}
\end{exmp}

\begin{code}
\begin{verbatim}
R<x>:=PolynomialRing(Rationals());
f:=x^2-5;
L<a>:=NumberField(f);
mu1:=1/2*(-1+a);
mu2:=1/2*(-1-a);
R<x,y,z> := PolynomialRing(L,3);
X:=Matrix(R,3,1,[x,y,z]);

H := MatrixGroup< 3, L| [0,1,0, 0,0,1, 1,0,0], [1,0,0, 0,-1,0, 0,0,-1], [-1/2,mu2/2,mu1/2, mu2/2,mu1/2,-1/2, mu1/2,-1/2,mu2/2] >;

CT:=CharacterTable(H); //only the trivial character is linear

//compute the Molien Series
CT:=CharacterTable(H);
xi:=CT[1];
S:=Inverse(NumberingMap(H));
R<t>:=PolynomialRing(L);
Mol:=0;
for i:=1 to Order(H) do
  Mol := Mol + (L!xi(S(i)))*Determinant(S(i))/(Determinant(DiagonalMatrix([1,1,1])- t*Matrix(R,S(i))));
end for;
Mol:=Mol/Order(H);
Mol;
PS<t>:=PowerSeriesRing(L);
PS!Mol;
//1 + t^2 + t^4 + 2*t^6 + 2*t^8 + 3*t^10 + 4*t^12 + 4*t^14 + t^15 + 5*t^16 + t^17 + 6*t^18 + t^19 + O(t^20)

//Molien Series for 2-forms
Mol:=0;
Mol0:=0;
for j:=1 to 1 do
  xi:=CT[j];
  S:=Inverse(NumberingMap(H));
  PS<t>:=PowerSeriesRing(L);
  PR<s>:=PolynomialRing(PS);
  Mol:=0;
  Mol0:=0;
  for i:=1 to Order(H) do
    Mol := Mol + (PR!xi(S(i))*(Determinant(DiagonalMatrix([1,1,1]) + s*Matrix(L,S(i)))))/(Determinant(DiagonalMatrix([1,1,1]) - t*Matrix(L,S(i))));
    Mol0 := Mol0 + (PS!xi(S(i)))/(Determinant(DiagonalMatrix([1,1,1]) - t*Matrix(L,S(i))));
  end for;
  Mol:=Mol/Order(H);
  Mol0:=Mol0/Order(H);
end for;
Mol;
Mol/(Mol0);

//(1 + O(t^20))*s^3 + (t + t^5 + t^6 + t^9 + t^10 + t^14 - t^16 + O(t^20))*s^2 + (t + t^5 + t^6 + t^9 + t^10 + t^14 - t^16 + O(t^20))*s + 1 + O(t^20)

Inv:=PrimaryInvariants(InvariantRing(H));
f0:=R!Inv[1];
f1:=R!Inv[2];
f2:=R!Inv[3];
df0:=[Derivative(f0,R.1), Derivative(f0,R.2), Derivative(f0,R.3)];
df1:=[Derivative(f1,R.1), Derivative(f1,R.2), Derivative(f1,R.3)];
df2:=[Derivative(f2,R.1), Derivative(f2,R.2), Derivative(f2,R.3)];

u:=df0;
v:=df1;
F:=Matrix(R,3,1,[u[2]*v[3] - u[3]*v[2], u[3]*v[1]-u[1]*v[3], u[1]*v[2] - u[2]*v[1]]);

f0:=70*x^5 + (-10*a + 130)*x^3*y^2 + (5*a + 65)*x*y^4 + (10*a +130)*x^3*z^2 + 180*x*y^2*z^2 + (-5*a + 65)*x*z^4;
f1:=(-5*a + 65)*x^4*y +(10*a + 130)*x^2*y^3 + 70*y^5 + 180*x^2*y*z^2 + (-10*a + 130)*y^3*z^2 +(5*a + 65)*y*z^4;
f2:=(5*a + 65)*x^4*z + 180*x^2*y^2*z + (-5*a + 65)*y^4*z+ (-10*a + 130)*x^2*z^3 + (10*a + 130)*y^2*z^3 + 70*z^5;
F:=Matrix(R,3,1,[f0,f1,f2]);

Auts:={};
X:=Matrix(R,3,1,[x,y,z]);
S:=Inverse(NumberingMap(H));
for i:=1 to Order(H) do
    t:=Matrix(R,S(i))*X;
    G:=Matrix(R,3,1,[Evaluate(F[1,1],[t[1,1],t[2,1],t[3,1]]),Evaluate(F[2,1],[t[1,1],t[2,1],t[3,1]]),Evaluate(F[3,1],[t[1,1],t[2,1],t[3,1]])]);
    C:=Matrix(R,S(i))^(-1)*G;
    if C[1,1]*F[2,1] eq C[2,1]*F[1,1] and C[1,1]*F[3,1] eq C[3,1]*F[1,1] and C[2,1]*F[3,1] eq C[3,1]*F[2,1] then
        Auts:=Include(Auts, S(i));
    end if;
end for;
#Auts;

\end{verbatim}
\end{code}

\begin{code}
\begin{verbatim}
//check
f0:=70*x^5 + (-10*a + 130)*x^3*y^2 + (5*a + 65)*x*y^4 + (10*a +130)*x^3*z^2 + 180*x*y^2*z^2 + (-5*a + 65)*x*z^4;
f1:=(-5*a + 65)*x^4*y +(10*a + 130)*x^2*y^3 + 70*y^5 + 180*x^2*y*z^2 + (-10*a + 130)*y^3*z^2 +(5*a + 65)*y*z^4;
f2:=(5*a + 65)*x^4*z + 180*x^2*y^2*z + (-5*a + 65)*y^4*z+ (-10*a + 130)*x^2*z^3 + (10*a + 130)*y^2*z^3 + 70*z^5;
F:=Matrix(R,3,1,[f0,f1,f2]);

Auts:={};
X:=Matrix(R,3,1,[x,y,z]);
S:=Inverse(NumberingMap(H));
for i:=1 to Order(H) do
    t:=Matrix(R,S(i))*X;
    G:=Matrix(R,3,1,[Evaluate(F[1,1],[t[1,1],t[2,1],t[3,1]]),Evaluate(F[2,1],[t[1,1],t[2,1],t[3,1]]),Evaluate(F[3,1],[t[1,1],t[2,1],t[3,1]])]);
    C:=Matrix(R,S(i))^(-1)*G;
    if C[1,1]*F[2,1] eq C[2,1]*F[1,1] and C[1,1]*F[3,1] eq C[3,1]*F[1,1] and C[2,1]*F[3,1] eq C[3,1]*F[2,1] then
        Auts:=Include(Auts, S(i));
    end if;
end for;
#Auts;
\end{verbatim}
\end{code}

\begin{code}
\begin{verbatim}
R.<x>=QQ[]
K.<a>=NumberField(x^2-5)
P.<x,y,z>=ProjectiveSpace(K,2)
H=End(P)
f0=(70*x^5 + (-10*a + 130)*x^3*y^2 + (5*a + 65)*x*y^4 + (10*a +130)*x^3*z^2 + 180*x*y^2*z^2 + (-5*a + 65)*x*z^4;
f1=(-5*a + 65)*x^4*y +(10*a + 130)*x^2*y^3 + 70*y^5 + 180*x^2*y*z^2 + (-10*a + 130)*y^3*z^2 +(5*a + 65)*y*z^4;
f2=(5*a + 65)*x^4*z + 180*x^2*y^2*z + (-5*a + 65)*y^4*z+ (-10*a + 130)*x^2*z^3 + (10*a + 130)*y^2*z^3 + 70*z^5);

f=H([f0,f1,f2])
m1=matrix(3,[0,1,0, 0,0,1, 1,0,0])
m2 =matrix(3,[1,0,0, 0,-1,0, 0,0,-1])
f.conjugate(m1)==f,f.conjugate(m2)==f
\end{verbatim}
\end{code}

\begin{code}
\begin{verbatim}
R.<x>=QQ[]
K.<a>=NumberField(x^2-5)
P.<x,y,z>=ProjectiveSpace(K,2)
H=End(P)
f0=12/7*a*x^4*y*z + 1/7*(-12*a + 60)*x^2*y^3*z + 1/7*(-12*a - 60)*x^2*y*z^3 + 1/7*(-6*a - 6)*y^5*z +24/7*a*y^3*z^3 + 1/7*(-6*a + 6)*y*z^5
f1=1/7*(-6*a + 6)*x^5*z + 1/7*(-12*a - 60)*x^3*y^2*z + 24/7*a*x^3*z^3 + 12/7*a*x*y^4*z + 1/7*(-12*a +60)*x*y^2*z^3 + 1/7*(-6*a - 6)*x*z^5
f2=1/7*(-6*a - 6)*x^5*y + 24/7*a*x^3*y^3 + 1/7*(-12*a + 60)*x^3*y*z^2 + 1/7*(-6*a + 6)*x*y^5 + 1/7*(-12*a - 60)*x*y^3*z^2 + 12/7*a*x*y*z^4
f=H([f0,f1,f2])

m1=matrix(3,[0,1,0, 0,0,1, 1,0,0])
m2 =matrix(3,[1,0,0, 0,-1,0, 0,0,-1])
f.conjugate(m1)==f,f.conjugate(m2)==f
\end{verbatim}
\end{code}

\subsection{Method of Fixed Points Examples} \label{sect.FP.exmp}
    The algorithm for base field $\Q$ is implemented in Sage \cite{sage}, and we discuss examples and that implementation in this section.
    Since Sage contains a fast algorithm to compute periodic points over $\Q$ \cite{HutzC} but must rely on slower more naive computations for general number fields, the slowest portion of the algorithm in practice is actually determining the periodic points. In particular, the differences between the actual runtime for the examples cannot be attributed only to the degree of the morphisms, but instead to the dynamical structure of the function, i.e. how many rational periodic points there are over the needed fields. Unfortunately, the problem of rational periodic point structure is not yet well understood. Morton-Silverman have conjectured a uniform upper bound on the number of preperiodic points of a morphism on $\P^N$ \cite{MortSilverman} depending on the degree of the map and the degree of the field, but this is still an open problem.

    The algorithm was run with Sage 6.7 on an Ubuntu $14.04.2$ VirtualBox on an Acer Aspire V3 with a dedicated $8$Gb of RAM and an Intel i-$7$ $2.5$GHz processor. Figure \ref{fig:table1} gives a few examples with runtime listed. Note that the runtime with the asterisk is described in Example \ref{exmp2} and was run without computing the 3-periodic points since they are not needed.

    \begin{figure}
    \caption{Experimental Run Times}
    \label{fig:table1}
        \begin{center}
             \begin{tabular}{||c c c c||}
             \hline
             Morphism & RunTime &Group Classification& Size \\ [0.5ex]
             \hline\hline
             $(x^3:y^3:z^3)$ & 49.65s & octahedral & 24 \\
             \hline
            $(2x^3 + xy^2: 2y^3 +yz^2:x^2z +2z^3)$ & 17.79s* & tetrahedral & 12 \\
            \hline
            $(x^2:y^2:z^2)$ & 16s & dihedral& 6\\
            \hline
            $(x^3:x^2y+y^3:z^3)$ & 33.68s & $C_2 \times C_2$ & 4\\
             \hline
             $(x^3 + xy^2: yx^2 +2y^3:z^3)$ & 54s & $C_2 \times C_2$ & 4\\
             \hline
            $(x^2 +y^2:y^2+z^2:z^2)$ & 2.46s& trivial & 1 \\
            \hline
            $(x^2 + y^2: y^2:z^2)$  &11.09s& trivial & 1 \\
            \hline
            $(x^3 + 6987y^3:y^3:z^3)$  & 16.34s & trivial & 1 \\
            \hline
            $(x^4 + y^4: y^4:z^4)$  &19.15s& trivial & 1 \\
            \hline
            \end{tabular}
        \end{center}
    \end{figure}

\begin{exmp}\label{exmp2}
    For $f(x,y,z) =(2x^3 + xy^2: 2y^3 +yz^2:x^2z +2z^3)$, $\Aut(f)$ is given by the dihedral group. Computing the rational 3-periodic points was infeasible; however, we were able to prove that there are no rational 3-periodic points for the number fields needed in the algorithm.

    Reducing modulo $23$, which is prime over both fields, we use the description of the cycle length of rational points compared to the cycles length modulo primes of good reduction described by the second author \cite{Hutz2} to see there are no $3$-cycles. In particular, we examine the cycles over $\F_{23^2}$ to find the cycles defined over the residue fields of the two needed quadratic fields. We can conclude that there are no 3-cycles for either field. Therefore, we can exclude the cases involving the 3-cycles and the algorithm is able to complete.
\end{exmp}

\bibliography{auto_bib}

\providecommand\biburl[1]{\texttt{#1}}
\begin{thebibliography}{10}

\bibitem{Benson}
D.J. Benson.
\newblock {\em Polynomial invariants of finite groups}, volume 190 of {\em LMS
  Lecture Notes}.
\newblock Cambridge University Press, 1993.

\bibitem{Blichfeldt}
H.~Blichfeldt.
\newblock {\em Finite Collineation Groups}.
\newblock The University of Chicago Press, 1917.

\bibitem{Crass}
Scott Crass.
\newblock Solving the sectic by iteration: a complex dynamical approach.
\newblock {\em Int. Math. Res. Not.}, 2:83--99, 1997.

\bibitem{Crass2}
Scott Crass.
\newblock Solving the quintic by iteration in three dimensions.
\newblock {\em Experiment. Math.}, 10(1):1--24, 2001.

\bibitem{defaria}
Joao~Alberto de~Faria.
\newblock An algorithm to compute automorphsim groups for morphisms of {P2}.
\newblock Master's thesis, Florida Institute of Technology, Melbourne, FL,
  2015.

\bibitem{Derksen}
Harm Derksen and Gegor Kemper.
\newblock {\em Computational Invariant Theory}, volume 130 of {\em
  Encyclopaedia of Mathematical Sciences}.
\newblock Springer, 2002.

\bibitem{Doyle}
P.~Doyle and C.~McMullen.
\newblock Solving the quintic by iteration.
\newblock {\em Acta Arith.}, 163(3-4):151--180, 1989.

\bibitem{FMV}
Xander Faber, Michelle Manes, and Bianca Viray.
\newblock Computing conjugating sets and automorphism groups of rational
  functions.
\newblock {\em arXiv:1202.5557}, 2012.

\bibitem{gatermann}
Karin Gatermann.
\newblock Semi-invariants, equivariants and algorithms.
\newblock {\em Applicable Algebra in Engineering, Communication and Computing},
  7(2):105--124, 1996.

\bibitem{Eagon}
M.~Hochester and J.A. Eagon.
\newblock Cohen-macaulay rings, invariant theory, and the generic perfection of
  determinantal loci.
\newblock {\em Amer. J. Math.}, 93:1020--1058, 1971.

\bibitem{Hutz11}
Benjamim Hutz and Michelle Manes.
\newblock The field of definition for dynamical systems on {PN}.
\newblock {\em Bul. Inst of Math Acadmia Sinica NS}, 9(4):585--602, 2014.

\bibitem{Hutz2}
Benjamin Hutz.
\newblock Good reduction of periodic points.
\newblock {\em Illinois J. Math.}, 53(4):1109--1126, 2009.

\bibitem{HutzC}
Benjamin Hutz.
\newblock Determination of all rational preperiodic points for morphism of pn.
\newblock {\em Math Comp}, 84:289--308, 2015.

\bibitem{HutzSzpiro}
Benjamin Hutz and Lucien Szpiro.
\newblock Almost {N}ewton, sometimes {L}att\`es.
\newblock {\em arXiv:1105.1696}, 2011.

\bibitem{Kemper}
Gegor Kemper.
\newblock Calculating invariant rings of finite groups over arbitrary fields.
\newblock {\em J. Symb. Comp.}, 21:351--366, 1996.

\bibitem{klein}
Felix Klein.
\newblock {\em Gesammelte Mathematische Abhandlungen}, volume~2.
\newblock Sprigner, 1922.

\bibitem{Levy}
Alon Levy.
\newblock The space of morphisms on projective space.
\newblock {\em Acta Arith.}, 146(1):13--31, 2011.

\bibitem{Levy2}
Alon Levy, Michelle Manes, and Bianca Thompson.
\newblock Uniform bounds for preperidioc points in families of twists.
\newblock {\em arxiv:1204.4447}, 2012.

\bibitem{Miasnikov}
Nikita Miasnikov, Brian Stout, and Phillip Wiliams.
\newblock Automorphism loci for the moduli space of rational maps.
\newblock {\em arxiv:1408.5655}, 2014.

\bibitem{Blichfeldt2}
G.A. Miller and H.F.~Blichfeldt abnd L.E.~Dickson.
\newblock {\em Theory and Applications of Finite Groups}.
\newblock John Wiley and Sons, 1916.

\bibitem{MortSilverman}
P.~Morton and J.H Silverman.
\newblock Rational periodic points of rational functions.
\newblock {\em Internat. Math Res. Notices.}, 2:97--110, 1994.

\bibitem{Noether}
E.~Noether.
\newblock Der endlichkeitssatz der invarianten endlicher gruppen.
\newblock {\em Math. Ann.}, 77:89--92, 1916.

\bibitem{petsche}
Clayton Petsche, Lucien Szpiro, and Michael Tepper.
\newblock Isotriviality is equivalent to potential good reduction for
  endomorphisms of {$\mathbb{P}^N$} over function fields.
\newblock {\em Journal of Algebra}, 322(9):3345--3365, 2009.

\bibitem{Shephard-Todd}
G.C. Shephard and J.A. Todd.
\newblock Finite unitary reflection groups.
\newblock {\em Canad. J. Math.}, 6:274--304, 1954.

\bibitem{Silverman12}
Joseph~H. Silverman.
\newblock The field of definition for dynamical systems on $\mathbb{P}^{1}$.
\newblock {\em Comp. Math.}, 98:269--304, 1995.

\bibitem{Silverman9}
Joseph~H. Silverman.
\newblock The space of rational maps on {P1}.
\newblock {\em Duke Math. J.}, 94:41--118, 1998.

\bibitem{ADS}
Joseph~H. Silverman.
\newblock {\em The Arithmetic of Dynamical Systems}.
\newblock Graduate Texts in Mathematics. Springer-Verlag, 2007.

\bibitem{Silverman20}
Joseph~H. Silverman.
\newblock {\em Moduli Spaces and Arithmetic Dynamics}.
\newblock {CRM} Monograph Series. American Mathematical Society, Providece, RI,
  2012.

\bibitem{smith2}
Larry Smith.
\newblock {\em Polynomial Invariants of Finite Groups}, volume~6 of {\em
  Research Notes in Mathematics}.
\newblock A.H. Peters, 1995.

\bibitem{smith}
Larry Smith.
\newblock Polynomial invariants of finite groups a survey of recent
  developments.
\newblock {\em Bull. Amer. Math. Soc.}, 34(3):211--250, 1997.

\bibitem{Soloman}
Louis Soloman.
\newblock Invariants of finite reflection groups.
\newblock {\em Nagoya Math. J.}, 22:57--64, 1963.

\bibitem{sage}
William Stein and David Joyner.
\newblock {SAGE}: System for algebra and geometry experimentation.
\newblock {\em Communications in Computer Algebra (SIGSAM Bulletin)}, July
  2005.
\newblock {\tt http://www.sagemath.org}.

\bibitem{Sturmfels2}
Bernd Sturmfels.
\newblock {\em Algorithms in invariant theory}.
\newblock Springer-Verlag, 2008.

\bibitem{West}
Llyod~W. West.
\newblock The moduli space of cubic rational maps.
\newblock {\em arXiv:1408.3247}, 2014.

\bibitem{Worfolk}
Patrick~A. Worfolk.
\newblock Zeros of equivariant vector fields: Algorithms for an invariant
  approach.
\newblock {\em J. Symb. Comp.}, 17:487--511, 1994.

\end{thebibliography}
\bibliographystyle{plain}

\end{document}